\title{Generic vanishing theorem for Fujiki class $\cC$}
\author{Haohao Liu}
\date{\today}
\newtheorem{thm}{Theorem}[subsection]
\newtheorem*{thm*}{Theorem}
\newtheorem{lm}[thm]{Lemma}
\newtheorem{pp}[thm]{Proposition}
\newtheorem{cor}[thm]{Corollary}
\newtheorem*{cor*}{Corollary}
\newtheorem{cjt}[thm]{Conjecture}
\newtheorem{ft}[thm]{Fact}
\theoremstyle{remark}
\newtheorem{rk}[thm]{Remark}
\theoremstyle{definition}
\newtheorem{df}[thm]{Definition}
\newtheoremstyle{example}
{\topsep} {\topsep}%
{\upshape}
{}
{\bfseries\scshape}
{.}
{1em}
{}
\theoremstyle{example}
\newtheorem{eg}[thm]{Example}
\newtheoremstyle{example_contd}
{\topsep} {\topsep}%
{\upshape}
{}
{\bfseries\scshape}
{.}
{1em}
{\thmname{#1} \thmnumber{ #2}\thmnote{#3} (continued)}
\theoremstyle{example_contd}
\newtheorem*{eg_contd}{Example}
\def\Alb{\mathrm{Alb}}
\def\an{\mathrm{an}}
\def\C{\mathbb{C}}
\def\car{\mathrm{Char}}
\def\cC{\mathcal{C}}
\def\cD{\mathcal{D}}
\def\cE{\mathcal{E}}
\def\cH{\mathcal{H}}
\def\cK{\mathcal{K}}
\def\cL{\mathcal{L}}
\def\codim{\mathrm{codim}}
\def\covdim{\mathrm{covdim}}
\def\cP{\mathcal{P}}
\def\db{\bar{\partial}}
\def\DE{\mathrm{DE}}
\def\dR{\mathrm{dR}}
\def\Ext{\mathrm{Ext}}
\def\free{\mathrm{free}}
\def\G{\mathbb{G}}
\def\GL{\mathrm{GL}}
\def\Hom{\mathrm{Hom}}
\def\Id{\mathrm{Id}}
\def\im{\mathrm{im}}
\def\Loc{\mathrm{Loc}}
\def\Mod{\mathrm{Mod}}
\def\N{\mathbb{N}}
\def\NS{\mathrm{NS}}
\def\op{\mathrm{op}}
\def\Perv{\mathrm{Perv}}
\def\Pic{\mathrm{Pic}}
\def\R{\mathbb{R}}
\def\Rep{\mathrm{Rep}}
\def\Sch{\mathrm{Sch}}
\def\Sets{\mathrm{Sets}}
\def\Supp{\mathrm{Supp}}
\def\tor{\mathrm{tor}}
\def\VB{\mathrm{VB}}
\def\Z{\mathbb{Z}}
\begin{document}
		\maketitle
	\tableofcontents
	\section{Introduction}
Recall the historical origin of generic vanishing results. In the last paragraph of \cite{Enriques1939courbes}, Enriques gave an upper bound on the dimension of the paracanonical system of curves on some class of algebraic surfaces. However, in \cite[p.354]{enriques1949superficie} he pointed out a mistake in the proof of his result as well as a similar theorem by Severi \cite{severi1942teoria}. Catanese \cite[p.103]{catanese1983moduli} posed Conjecture \ref{cjt:Catanese}.
\begin{cjt}\label{cjt:Catanese}For a smooth projective surface $S/\C$ without irrational pencils,   the dimension of the paracanonical system $\{K_S\}$ is at most the geometric genus $p_g(S)$.\end{cjt}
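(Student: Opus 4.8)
\medskip
\noindent I sketch a possible proof, deducing the bound from the generic vanishing theorem (in the classical projective form of Green--Lazarsfeld, or in the form for Fujiki class $\cC$ developed below) together with the description of the positive-dimensional cohomology support loci.

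\emph{Reduction.} Write $q = h^1(S,\mathcal{O}_S) = \dim\Pic^0(S)$ and $p_g = h^0(S,\omega_S)$. One may assume $q\ge 2$: if $q = 0$ then $\Pic^0(S) = 0$ and $\{K_S\} = |K_S|$ has dimension $p_g - 1\le p_g$; and $q = 1$ is excluded by hypothesis, since then the Albanese image is an elliptic curve and the Stein factorisation of the Albanese morphism is a fibration with connected fibres over a curve of genus $1$, an irrational pencil. Regard $\{K_S\}$ as parametrising effective divisors algebraically equivalent to $K_S$, with its natural morphism $\pi\colon\{K_S\}\to T$, $D\mapsto\mathcal{O}_S(D)$, to the $\Pic^0(S)$-torsor $T$ of line-bundle classes algebraically equivalent to $\omega_S$, whose fibre over $L$ is $\mathbb{P}\bigl(H^0(S,L)\bigr)$. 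Choosing a base point identifies $T$ with $\Pic^0(S)$ via $\omega_S\otimes P\leftrightarrow P$. For an irreducible component $Z\subseteq\{K_S\}$ set $W := \overline{\pi(Z)}\subseteq\Pic^0(S)$; then $W$ lies in the closed locus $\{P : h^0(S,\omega_S\otimes P)>0\}$, and the general fibre of $Z\to W$ is a subvariety of a projective space $\mathbb{P}^{\,h^0(S,\omega_S\otimes P)-1}$, so for $P$ general in $W$
\[
\dim Z \;\le\; \dim W \,+\, h^0(S,\omega_S\otimes P) \,-\, 1 .
\]
It suffices to bound the right-hand side by $p_g$ in all cases.

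\emph{Input from generic vanishing.} As $S$ is projective (hence of Fujiki class $\cC$), the generic vanishing theorem applies and the loci $V^i(\omega_S) := \{P\in\Pic^0(S) : h^i(S,\omega_S\otimes P)\ne 0\}$ are finite unions of torsion translates of subtori. By the theorem of Beauville and Green--Lazarsfeld the positive-dimensional components of $V^1(\omega_S)$ are pulled back from irrational pencils on $S$; hence our hypothesis forces $V^1(\omega_S)$ to be a \emph{finite} set of torsion points (containing $\mathcal{O}_S$, since $q\ge 1$). Moreover $h^2(S,\omega_S\otimes P) = h^0(S,P^{\vee})$ by Serre duality, which is $0$ unless $P = \mathcal{O}_S$, while $\chi(S,\omega_S\otimes P) = \chi(\mathcal{O}_S) = 1 - q + p_g$ by Riemann--Roch. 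Therefore $h^0(S,\omega_S\otimes P) = 1 - q + p_g$ for every $P\notin V^1(\omega_S)\cup\{\mathcal{O}_S\}$. Then: if $\dim W\ge 1$, a general $P\in W$ avoids this finite set, so $\dim Z\le \dim W - q + p_g\le p_g$ since $\dim W\le q$; and if $\dim W = 0$, say $W = \{P_0\}$, then $\dim Z\le h^0(S,\omega_S\otimes P_0) - 1$, which is $p_g - 1$ for $P_0 = \mathcal{O}_S$ and $p_g - q\le p_g$ for $P_0\notin V^1(\omega_S)$. The only remaining case is $P_0$ a \emph{sporadic} torsion point of $V^1(\omega_S)$ with $P_0\ne\mathcal{O}_S$, where one must show $h^0(S,\omega_S\otimes P_0)\le p_g + 1$, equivalently $h^1(S,\omega_S\otimes P_0)\le q$ (using $h^2(S,\omega_S\otimes P_0) = 0$ and Riemann--Roch).

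\emph{The main obstacle.} The inequality $h^1(S,\omega_S\otimes P_0)\le q$ at a sporadic torsion point $P_0$ of $V^1(\omega_S)$ is the real content of the theorem. The approach would be the Green--Lazarsfeld deformation argument at $P_0$: for $v\in H^1(S,\mathcal{O}_S)$ consider the derivative complex $H^0(\omega_S\otimes P_0)\xrightarrow{\,\cup v\,}H^1(\omega_S\otimes P_0)\xrightarrow{\,\cup v\,}H^2(\omega_S\otimes P_0)$; since $P_0$ is isolated in $V^1(\omega_S)$, for generic $v$ it is exact in the middle, and, $h^2(\omega_S\otimes P_0)$ being zero, this already gives $h^1(\omega_S\otimes P_0)\le h^0(\omega_S\otimes P_0)$, which via Riemann--Roch only recovers $\chi(\mathcal{O}_S)\ge 0$ and is not enough. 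One must use the absence of irrational pencils more decisively --- for instance by studying the cup-product pairing $H^0(S,\Omega^1_S)\otimes H^0(S,\omega_S\otimes P_0)\to H^1(S,\omega_S\otimes P_0)$ through a Castelnuovo--de Franchis-type analysis, or by passing to the \'etale cyclic cover $\widetilde S\to S$ attached to $P_0$, where $\pi_*\mathcal{O}_{\widetilde S} = \bigoplus_j P_0^{\otimes j}$, and bounding the $\chi$-eigencomponents of $H^1(\widetilde S,\C)$ --- the delicate point being that an irrational pencil on $\widetilde S$ need not descend to $S$. Granting this sporadic-point bound, the theorem follows from the dimension count above, so I expect that step to be where the difficulty is concentrated.
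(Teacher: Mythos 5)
First, a point of order: the paper does not prove Conjecture \ref{cjt:Catanese} at all. It is recalled purely as historical motivation, and its proof is attributed to Green and Lazarsfeld \cite[Theorem 4.2]{green1987deformation}; none of the machinery developed here (which concerns the loci $S^{p,q}(X,F)$ for manifolds in Fujiki class $\cC$) is brought to bear on it. So there is no in-paper argument to compare yours against, and your sketch must be judged on its own terms.

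Your reduction is sound as far as it goes: the dimension count $\dim Z\le\dim W+h^0(S,\omega_S\otimes P)-1$ over each component, the disposal of the cases $q\le 1$, the use of Riemann--Roch and Serre duality, and the observation that the absence of irrational pencils forces $V^1(\omega_S)$ to be a finite set of torsion points (via the Beauville/Green--Lazarsfeld/Simpson structure theory, a legitimate but substantial external input). However, as you yourself concede, the argument is not complete: the case of a component lying entirely over an isolated point $P_0\in V^1(\omega_S)\setminus\{\mathcal{O}_S\}$, where one must prove $h^1(S,\omega_S\otimes P_0)\le q$, is left open, and your own computation shows that exactness of the derivative complex at an isolated point of $V^1$ only recovers $\chi(\mathcal{O}_S)\ge0$. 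That is precisely where the entire difficulty of Catanese's conjecture is concentrated --- it is the content of Green--Lazarsfeld's deformation-theoretic Proposition 4.1 --- and naming two candidate strategies (a Castelnuovo--de Franchis analysis of the cup product, or passage to the cyclic cover) without carrying either out does not close it; indeed you correctly flag that the cyclic-cover route founders on pencils of $\widetilde S$ that do not descend. Note finally that nothing proved in the present paper helps here: Theorem \ref{thm:main} and its corollaries assert that certain jumping loci are strict or thin, i.e.\ they are statements about \emph{generic} line bundles, whereas what your argument needs is a pointwise bound on $h^1$ at specific isolated torsion points of $\Pic^0(S)$. So the proposal is a reasonable skeleton with a genuine, unresolved gap at its crux.
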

In 1987, Green and Lazarsfeld \cite[Theorem 4.2]{green1987deformation} provided a positive answer to Conjecture \ref{cjt:Catanese}. Its proof uses a result (\cite[Prop.~4.1]{green1987deformation}) of  generic vanishing type. 

As is explained in \cite[pp.619--620]{ueno1983classification}, the dimension of $\{K_S\}$  in Conjecture \ref{cjt:Catanese} is related to Conjecture \ref{cjt:Beauville}, which is also of generic vanishing type.
\begin{cjt}[{\cite[Problem 8, p.620]{ueno1983classification}}]\label{cjt:Beauville}
Let $X$ be a projective manifold and $\alpha:X\to \Alb(X)$ be an Albanese morphism. If $\dim \alpha(X)>1$, then $H^1(X,L)=0$ for generic $L\in\Pic^0(X)$. 
\end{cjt}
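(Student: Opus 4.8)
The plan is to reduce, via Serre duality, to a codimension estimate for a non-vanishing locus on the Albanese torus, and then feed that locus into the generic vanishing theorem. Write $n=\dim X$ and $A=\Alb(X)$, and recall that $\alpha^{*}\colon\Pic^{0}(A)\to\Pic^{0}(X)$ is an isomorphism, so every $L\in\Pic^{0}(X)$ equals $\alpha^{*}P$ for a unique $P$ in the dual torus $\widehat A$. Since $H^{1}(X,L)^{\vee}\cong H^{n-1}(X,\omega_{X}\otimes L^{-1})$ and $L\mapsto L^{-1}$ is an automorphism of $\Pic^{0}(X)$, it suffices to show that the non-vanishing locus
\[
V^{n-1}(\omega_{X}):=\bigl\{\,P\in\widehat A:\ H^{n-1}(X,\omega_{X}\otimes\alpha^{*}P)\neq 0\,\bigr\}
\]
is a proper closed subvariety of $\widehat A$; its complement is then a dense Zariski-open set of parameters $P$, equivalently of line bundles $L=(\alpha^{*}P)^{-1}$, on which $H^{1}(X,L)=0$. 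Put $Z:=\alpha(X)$ and $d:=\dim Z=\dim\alpha(X)\ge 2$, so the general fibre of $\alpha$ has dimension $k:=n-d$.

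Next I would establish $\codim_{\widehat A}V^{n-1}(\omega_{X})\ge d-1$. By Koll\'{a}r's decomposition theorem, $R\alpha_{*}\omega_{X}\simeq\bigoplus_{j\ge 0}\mathcal{F}_{j}[-j]$ in $D^{b}(A)$, where $\mathcal{F}_{j}:=R^{j}\alpha_{*}\omega_{X}$ is a coherent sheaf supported on $Z$, and Grauert--Riemenschneider--Koll\'{a}r vanishing gives $\mathcal{F}_{j}=0$ for $j>k$. Together with the projection formula and the Leray spectral sequence this gives $H^{n-1}(X,\omega_{X}\otimes\alpha^{*}P)=\bigoplus_{i+j=n-1}H^{i}(A,\mathcal{F}_{j}\otimes P)$, hence $V^{n-1}(\omega_{X})=\bigcup_{i+j=n-1}V^{i}(\mathcal{F}_{j})$; since the $j$-th term is identically zero once $j>k$, only pairs with $i=n-1-j\ge d-1$ contribute. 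Now I would invoke the generic vanishing theorem in the form: each $\mathcal{F}_{j}=R^{j}\alpha_{*}\omega_{X}$ is a $\mathrm{GV}$-sheaf on $A$, i.e. $\codim_{\widehat A}V^{i}(\mathcal{F}_{j})\ge i$ for all $i\ge 0$. Then every non-empty piece of the union has codimension $\ge d-1\ge 1$, so $V^{n-1}(\omega_{X})\subsetneq\widehat A$ and we are done. (The hypothesis $\dim\alpha(X)\ge 2$ enters exactly here, and it is necessary: for a curve of genus $\ge 2$ one has $H^{1}(X,L)\neq 0$ for every $L\in\Pic^{0}(X)$.)

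The step I expect to be the main obstacle is the generic vanishing theorem itself---the $\mathrm{GV}$ property of $R^{j}\alpha_{*}\omega_{X}$. The route I would take follows Green--Lazarsfeld's original argument: at a general point $L$ of an irreducible component $S$ of a non-vanishing locus, one analyses the ``derivative'' Koszul-type complex built from $H^{\bullet}(X,\omega_{X}\otimes L)$ with differentials given by cup product with holomorphic $1$-forms on $X$ (all of which come from $A$); the decisive ingredient is a Hodge-theoretic exactness statement for this complex, a consequence of Hodge theory on the compact K\"{a}hler manifold $X$, which bounds $\dim S\le\dim\widehat A-i$. This argument is valid for any compact K\"{a}hler $X$, hence for projective $X$; Hacon later reorganised and refined it so as to exhibit each $R^{j}\alpha_{*}\omega_{X}$ as a $\mathrm{GV}$-sheaf, the form used above. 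In the projective case one can alternatively characterise the $\mathrm{GV}$ property through the Fourier--Mukai transform---asking that $R\widehat{\mathcal{S}}\bigl(R\mathcal{H}om(\mathcal{F}_{j},\mathcal{O}_{A})\bigr)$ be concentrated in cohomological degree $g=\dim A$ (Hacon, Pareschi--Popa)---and verify it using Koll\'{a}r's vanishing theorems together with Serre vanishing after pulling back along multiplication-by-$m$ isogenies. Once the $\mathrm{GV}$ input is secured, everything above is formal; I expect essentially all of the difficulty to sit in this Hodge-theoretic (or Fourier--Mukai) step. Finally, since a projective manifold lies in Fujiki class $\cC$, the generic vanishing results of the present paper apply to $X$ directly and supply exactly this input.
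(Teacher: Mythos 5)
Your argument is correct, but it is a genuinely different (and much heavier) route than the one the paper takes. The paper treats this statement purely as a corollary of Green--Lazarsfeld's codimension bound (Fact \ref{ft:GL}): taking $k=1$ and $F=O_X$ there gives $\codim_{\Pic^0(X)}S^1(X)\ge\dim\alpha(X)-1\ge1$, and that is the entire proof. (One can also deduce it from Theorem \ref{thm:main}~\ref{it:Spq} with $(p,q)=(0,1)$: the hypothesis $\dim\alpha(X)\ge2$ forces $r(\alpha)\le n-2$, since $r_1(\alpha)=\max_Z(2\dim Z-\dim\alpha(Z))-n$ is at most $n-2$ both for $Z=X$ and for any proper $Z$, so $p+q=1<n-r(\alpha)$ holds automatically.) You instead dualize to $V^{n-1}(\omega_X)$, split $R\alpha_*\omega_X$ by Koll\'ar's decomposition theorem, kill the terms with $j>n-\dim\alpha(X)$ by Koll\'ar vanishing, and then invoke the $\mathrm{GV}$ property of each $R^j\alpha_*\omega_X$; the bookkeeping ($i\ge d-1\ge1$ on every surviving piece) is right, and the deep input you need is a genuine theorem of Hacon and Pareschi--Popa. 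What your route buys is more than the conjecture asks for: explicit codimension bounds and the finer structure of the locus via the individual sheaves $\mathcal F_j$; what it costs is that the key input is strictly stronger than Fact \ref{ft:GL} and lives entirely outside this paper. On that last point, your closing sentence is inaccurate: the results of the present paper do \emph{not} supply the $\mathrm{GV}$-sheaf property of $R^j\alpha_*\omega_X$ --- Theorem \ref{thm:main} only gives strictness (or thinness) of jumping loci for flat unitary bundles, with no codimension estimate and no statement about the coherent sheaves $R^j\alpha_*\omega_X$ individually --- so that ingredient must be quoted from Green--Lazarsfeld/Hacon/Pareschi--Popa, not from here.
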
Green and Lazarsfeld \cite{green1987deformation} proved a strengthening  of Conjecture \ref{cjt:Beauville}.  Since then, the theory of generic vanishing results has been very  much investigated and numerous authors have contributed to its development, so the   overview in Section \ref{sec:known} is by no means complete.

For a finitely generated $\Z$-module $H$, let $H_{\tor}$ be the  submodule of $H$ comprised of torsion elements and $H_{\free}:=H/H_{\tor}$. Let $F\to X$ be a  (holomorphic)  vector bundle\footnote{We use the words ``locally free sheaf" and ``vector bundle" interchangeably.} on a   complex manifold. The dimension of a complex space always means the complex dimension. For any three integers $p,q,m\ge0$, the corresponding jumping locus  is defined as \[S^{p,q}_m(X,F):=\{L\in \Pic^0(X):h^q(X,\Omega_X^p\otimes_{O_X}L\otimes_{O_X}F)\ge m\}.\] For simplicity, $p$ (resp. $m$, resp. $F$) is omitted when $p=0$ (resp. $m=1$, resp. $F=O_X$). Roughly speaking, generic vanishing results show that these loci are small (in some sense) and study their  structure when $F$ is flat unitary (in the sense of Definition \ref{df:flatuni}). 
\subsection{Known results}\label{sec:known}
Let $X$ be a connected compact Kähler manifold,  $\alpha:X\to \Alb(X)$ be the Albanese map associated with some base point and $F\to X$ be a flat unitary vector bundle.  Each locus $S^{p,q}_m(X,F)$ is an analytic subset of the complex torus $\Pic^0(X)$ (see the proof of Theorem \ref{thm:main} \ref{it:locusana}) and ``generic"  means outside a strict analytic subset.  In the literature, generic vanishing results  concerning $S^q(X,F)$ (resp. $S^{p,q}(X,F)$) are usually called of Kodaira type (resp. Nakano type). Such results typically involve the following invariants:\begin{itemize}
   	\item $\dim\alpha(X)$;
   	\item $w(X):=\max\{\mathrm{codim}(Z(\eta),X):0\neq \eta\in H^0(X,\Omega_X^1)\}$, where $Z(\eta)$ denotes the zero-locus of the $1$-form $\eta$;
   	\item the defect of semismallness $r(\alpha)$ of $\alpha$ (Section \ref{sec:equivdf}).
   \end{itemize}
   
Using deformation theory of  cohomology groups, Green and Lazarsfeld \cite[Remarks (1), p.401]{green1987deformation} proves  Fact \ref{ft:GL}, which is of Kodaira type and implies Conjecture \ref{cjt:Beauville}.
\begin{ft}\label{ft:GL}
For every integer $k\ge0$, one has \[\codim_{\Pic^0(X)}(S^k(X,F))\ge \dim \alpha(X)-k.\] In particular, if $k<\dim \alpha(X)$, then $H^k(X,F\otimes_{O_X}L)=0$ for    a generic line bundle $L\in \Pic^0(X)$. 
\end{ft}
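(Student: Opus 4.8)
The plan is to adapt the deformation-theoretic method of Green and Lazarsfeld, combining a lifting lemma for cohomology classes along the jumping locus with a pointwise exterior-algebra estimate that feeds in the Albanese map. Write $q=\dim H^1(X,O_X)=\dim\Pic^0(X)$. Since $S^k(X,F)$ is an analytic subset of $\Pic^0(X)$, it has finitely many irreducible components, and it suffices to prove $\codim_{\Pic^0(X)}Z\ge\dim\alpha(X)-k$ for each component $Z$. I would choose $L\in Z$ lying in the smooth locus of $Z$ and at which $h^k(X,F\otimes L)$ attains its generic value $m$ on $Z$; such $L$ are dense in $Z$, and $m\ge1$ because $Z\subseteq S^k(X,F)$. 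By upper semicontinuity $Z\subseteq S^k_m(X,F)$, so $h^k(X,F\otimes{-})$ is $\ge m$ on $Z$ while $\le m$ near $L$ in $\Pic^0(X)$; moreover $\dim Z=\dim T_LZ$ with $T_LZ\subseteq T_L\Pic^0(X)=H^1(X,O_X)$.

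The first key step is a deformation lemma: for every $v\in T_LZ$, cup product $v\cup({-})\colon H^k(X,F\otimes L)\to H^{k+1}(X,F\otimes L)$ vanishes. Since $L$ is a smooth point of $Z$, one has $v=\gamma'(0)$ for a holomorphic arc $\gamma\colon\Delta\to Z$ with $\gamma(0)=L$; along $\gamma$ the function $t\mapsto h^k(X,F\otimes\gamma(t))$ is $\ge m$ and $\le m$ near $0$, hence constant. Pulling back the Poincar\'{e} bundle along $\mathrm{id}_X\times\gamma$ and twisting by the pullback of $F$ produces a holomorphic family over $\Delta$; by Grauert's theorem its $k$-th direct image sheaf is locally free and compatible with base change, so every class in $H^k(X,F\otimes L)$ extends to a section over $\Delta$, a fortiori to first order along $\gamma$. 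In the Dolbeault model, deforming $L$ in the direction $v\in H^1(X,O_X)=H^{0,1}_{\db}(X)$ replaces $\db$ by $\db+t\,v\wedge({-})$ modulo $t^2$, and the obstruction to lifting a $\db$-closed $(0,k)$-form $\xi$ to first order is the class of $v\wedge\xi$, i.e. $v\cup[\xi]\in H^{k+1}(X,F\otimes L)$. Hence $v\cup\xi=0$ for all $\xi\in H^k(X,F\otimes L)$; fixing $0\neq\xi_0\in H^k(X,F\otimes L)$ we get $T_LZ\subseteq W_0:=\{v\in H^1(X,O_X):v\cup\xi_0=0\}$.

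The second key step is to show $\codim_{H^1(X,O_X)}W_0\ge\dim\alpha(X)-k$. Because $F\otimes L$ is flat unitary, the Hodge theory of unitary local systems applies (Hodge decomposition, complex conjugation, and cup product equal to wedge on holomorphic representatives): conjugating the relation defining $W_0$ identifies the subspace $\overline{W_0}\subseteq H^0(X,\Omega^1_X)$ with $\{w\in H^0(X,\Omega^1_X):w\wedge\overline{\xi_0}=0\}$, where $0\neq\overline{\xi_0}\in H^0(X,\Omega^k_X\otimes(F\otimes L)^\vee)$, and the vanishing $w\wedge\overline{\xi_0}=0$ of this holomorphic $(k+1)$-form holds pointwise. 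Pick $x\in X$ general, so that $\overline{\xi_0}(x)\neq0$ and $\mathrm{rank}(d\alpha_x)=\dim\alpha(X)$. The elementary bound $\dim\{u\in T^\vee_xX:u\wedge\eta=0\}\le k$ for $0\neq\eta\in\Lambda^k T^\vee_xX$ gives $\dim\{w(x):w\in\overline{W_0}\}\le k$. Identifying $H^0(X,\Omega^1_X)=(T_0\Alb(X))^\vee$ via $\alpha^*$, the kernel of $w\mapsto w(x)$ on $\overline{W_0}$ is contained in the subspace of $1$-forms vanishing at $x$, which has dimension $q-\dim\alpha(X)$ for general $x$; hence $\dim W_0=\dim\overline{W_0}\le k+(q-\dim\alpha(X))$. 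Combining the two steps, $\dim Z=\dim T_LZ\le q-(\dim\alpha(X)-k)$, which is the claimed bound for $Z$ and hence for $S^k(X,F)$. Finally, if $k<\dim\alpha(X)$ then $S^k(X,F)$ is a proper analytic subset of the connected complex manifold $\Pic^0(X)$, so its complement is dense, and $H^k(X,F\otimes L)=0$ for $L$ there.

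I expect the main difficulty to lie in making the first key step precise: selecting the correct (\v{C}ech--)Dolbeault resolution of the Poincar\'{e}-twisted family in a neighbourhood of $L$, checking that the first-order obstruction is computed by cup product with the Kodaira--Spencer class of the family --- which for $\Pic^0(X)$ is the identity on $H^1(X,O_X)$ --- and deriving genuine liftability of the fibre classes from the constancy of $h^k$ via Grauert's base-change theorem. Once this is in place, the second step is elementary linear algebra, granting the Hodge theory of unitary local systems, which I would quote rather than reprove.
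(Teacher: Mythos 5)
The paper does not actually prove this statement: it records it as Fact~\ref{ft:GL}, quoted from Green--Lazarsfeld, and the machinery the paper itself develops (pushforward along $\alpha$ to $\Alb(X)$ plus generic vanishing for perverse sheaves) is never used to reprove it --- indeed, as the remark after Theorem~\ref{thm:main} points out, that route only reaches the numerically weaker hypothesis $p+q<n-r(\alpha)$ and yields no codimension estimate. Your argument is a correct reconstruction of the cited deformation-theoretic proof, and both halves are sound: the first step (constancy of $h^k$ along an arc in a component $Z$ through a generic smooth point, Grauert's local-freeness and base-change theorem over the reduced disc, and identification of the first-order obstruction with cup product against the Kodaira--Spencer class, which is the identity for the universal family on $X\times\Pic^0(X)$) gives $T_LZ\subseteq W_0$, and the second step correctly converts $\codim W_0$ into the exterior-algebra bound at a general point. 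Two small points are worth making explicit: (i) every $L\in\Pic^0(X)$, hence $F\otimes L$, is flat unitary because $\Pic^0(X)\cong T(X)$ (Corollary~\ref{cor:chartoPic}), which is what licenses the Hodge decomposition, the conjugation isomorphism, and the fact that a holomorphic $(F\otimes L)^{\vee}$-valued form vanishes in cohomology iff it vanishes pointwise; (ii) $\overline{\xi_0}(x)$ lies in $\Lambda^kT^{\vee}_xX\otimes(F\otimes L)^{\vee}_x$ rather than in $\Lambda^kT^{\vee}_xX$, so one should first contract with a covector on the fibre not annihilating $\overline{\xi_0}(x)$ before invoking the bound $\dim\{u:u\wedge\eta=0\}\le k$. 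With those remarks the proof is complete. Compared with the sheaf-theoretic route of the paper, your approach buys exactly the sharp Kodaira-type codimension bound $\codim S^k(X,F)\ge\dim\alpha(X)-k$; what it does not give is the structural information (containment in translates of subtori, thinness, arithmeticity) that the pushforward method provides.
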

Green and Lazarsfeld also give a Nakano-type generic vanishing theorem. 
\begin{ft}[{\cite[Remarks (1), p.404]{green1987deformation}}]\label{ft:GLN} For any integers $i,j\ge0$ with $i+j<w(X)$, one has $S^{i,j}(X, F)\neq \Pic^0(X)$.
\end{ft}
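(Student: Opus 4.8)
The plan is to argue by contradiction using Green and Lazarsfeld's deformation method. Suppose $S^{i,j}(X,F)=\Pic^0(X)$, i.e. $h^j(X,\Omega^i_X\otimes_{O_X}L\otimes_{O_X}F)\ge 1$ for every $L\in\Pic^0(X)$; the hypothesis $i+j<w(X)$ will be used to force $h^j(X,\Omega^i_X\otimes L_0\otimes F)=0$ for a generic $L_0$, a contradiction. First I would set up the deformation. By upper semicontinuity and cohomology-and-base-change for the Poincaré bundle on $X\times\Pic^0(X)$, there is a dense open $U\subseteq\Pic^0(X)$ (complement of a proper analytic subset) on which $h^j(X,\Omega^i_X\otimes L\otimes F)$ is constant and the groups $H^j(X,\Omega^i_X\otimes L\otimes F)$ are the fibres of a holomorphic vector bundle. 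Fix $L_0\in U$ and put $E_0:=L_0\otimes F$; since $X$ is compact Kähler, every element of $\Pic^0(X)$ carries a flat unitary structure, so $E_0$ is a unitary local system. As each class of $H^j(X,\Omega^i_X\otimes E_0)$ extends to a local section of that vector bundle, it deforms to first order in every tangent direction $v\in H^1(X,O_X)=T_{L_0}\Pic^0(X)$; the first-order obstruction being cup product, the map
\[
H^1(X,O_X)\otimes_\C H^j(X,\Omega^i_X\otimes E_0)\xrightarrow{\ \cup\ }H^{j+1}(X,\Omega^i_X\otimes E_0)
\]
is identically zero, and more precisely Green--Lazarsfeld's deformation theorem controls the whole derivative complex $\bigl(H^\bullet(X,\Omega^i_X\otimes E_0),\cup v\bigr)$ near $L_0$.

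Next I would convert this into a statement about holomorphic $1$-forms and a Koszul complex. Hodge theory for unitary local systems on the compact Kähler manifold $X$ gives conjugation isomorphisms $\overline{H^q(X,\Omega^p_X\otimes E_0)}\cong H^p(X,\Omega^q_X\otimes E_0^{\vee})$ under which $\cup v$ (for $v\in H^1(X,O_X)=\overline{H^0(X,\Omega^1_X)}$) becomes $\wedge\eta$ with $\eta:=\bar v\in H^0(X,\Omega^1_X)$. Using $i+j<w(X)$, choose $\eta\in H^0(X,\Omega^1_X)\setminus\{0\}$ with $\codim(Z(\eta),X)=w(X)$ and form the complex of coherent sheaves
\[
\cK^\bullet:\quad \Omega^0_X\otimes E_0^{\vee}\xrightarrow{\wedge\eta}\Omega^1_X\otimes E_0^{\vee}\xrightarrow{\wedge\eta}\cdots\xrightarrow{\wedge\eta}\Omega^n_X\otimes E_0^{\vee},\qquad n=\dim X.
\]
Away from $Z(\eta)$ this is a twisted Koszul complex on a nowhere-vanishing section, hence exact there, so its cohomology sheaves are supported on $Z(\eta)$, a space of dimension $n-w(X)$. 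Grothendieck vanishing on that support and the hypercohomology spectral sequence give $\mathbb H^m(X,\cK^\bullet)=0$ for $m>2n-w(X)$; since the Serre dual of $\cK^\bullet$ is again a complex of this type (built from $E_0$), duality gives $\mathbb H^m(X,\cK^\bullet)=0$ for $m<w(X)$, in particular $\mathbb H^{i+j}(X,\cK^\bullet)=0$.

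The last step combines the two. In the hypercohomology spectral sequence $E_1^{p,q}=H^q(X,\Omega^p_X\otimes E_0^{\vee})\Rightarrow\mathbb H^{p+q}(X,\cK^\bullet)$ with $d_1=\wedge\eta$, the entry $E_1^{j,i}\cong\overline{H^j(X,\Omega^i_X\otimes E_0)}$ is nonzero, its outgoing $d_1$ is the conjugate of the cup-product map killed in the first paragraph and so vanishes, yet $E_\infty^{j,i}=0$ because $i+j<w(X)$; together with the derivative-complex information this should pin $H^j(X,\Omega^i_X\otimes E_0)$ down to $0$, contradicting $L_0\in S^{i,j}(X,F)$. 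The hard part is precisely this combination: the bare first-order vanishing $\cup v=0$ does not by itself collapse the spectral sequence, so one genuinely needs the full strength of Green--Lazarsfeld's deformation theorem and must control every differential $d_r$ entering and leaving $E_r^{j,i}$. A cleaner route, which I would ultimately prefer, is to recast the whole thing as homological algebra over the exterior algebra $H^\bullet(X,O_X)$ --- regarding $\bigoplus_q H^q(X,\Omega^i_X\otimes E_0)$ as a finitely generated graded module over it --- and invoke the Green--Lazarsfeld exterior-algebra lemma, with $w(X)$ entering through the support of the cohomology sheaves of the Koszul complexes $(\Omega^\bullet_X,\wedge\eta)$. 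Making that bookkeeping precise, together with the Hodge-theoretic conjugation duality for unitary local systems, is where the real work lies; everything else is formal.
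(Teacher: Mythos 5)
First, note that the paper does not prove Fact~\ref{ft:GLN} at all: it is quoted verbatim from Green--Lazarsfeld as a known result, so there is no internal proof to compare yours against. Judged on its own, your proposal correctly assembles the standard ingredients --- semicontinuity via the universal line bundle, the Green--Lazarsfeld deformation theorem at a generic point $L_0$ (where all $h^q(X,\Omega^p_X\otimes L\otimes F)$ are simultaneously locally constant, so every cup product $\cup v$ vanishes), Hodge-theoretic conjugation turning $\cup v$ into $\wedge\eta$ for $\eta=\bar v\in H^0(X,\Omega^1_X)$, and the Koszul complex $(\Omega^\bullet_X\otimes E_0^{\vee},\wedge\eta)$ with $\codim Z(\eta)=w(X)$. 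But it has a genuine gap at exactly the point you flag yourself. Vanishing of all first-order maps makes every $d_1$ in the hypercohomology spectral sequence zero, so $E_2=E_1$; it says nothing about $d_r$ for $r\ge 2$, and the data ``$E_1^{j,i}\neq 0$, outgoing and incoming $d_1$ vanish, $E_\infty^{j,i}=0$'' is perfectly consistent: the class can simply be killed by a higher differential. No contradiction is derived, and appealing to ``the full strength of the deformation theorem'' or to the exterior-algebra/BGG reformulation by name is not a proof --- that reformulation is precisely where the missing work lives. A secondary slip: you only record that the cohomology sheaves of $\cK^\bullet$ are \emph{supported} on $Z(\eta)$, which yields upper vanishing of hypercohomology in degrees above $n+2(n-w(X))$, not $2n-w(X)$ as you assert; the relevant input is the stronger statement that these cohomology sheaves actually \emph{vanish} in degrees below $\codim Z(\eta)$ (Koszul complex on a sequence of depth $\ge w(X)$ in a Cohen--Macaulay local ring), which gives the lower vanishing $\mathbb{H}^m=0$ for $m<w(X)$ directly, with no detour through Serre duality.

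To close the gap one must avoid the uncontrolled higher differentials. The standard route (and, as far as I can tell, Green--Lazarsfeld's own) is to use the sheaf-level exactness of $(\Omega^\bullet_X,\wedge\eta)$ in degrees $<w(X)$ to split the complex into short exact sequences $0\to K_p\to\Omega^p_X\to K_{p+1}\to 0$ for $p<w(X)$, tensor with the flat bundle $E_0$, and run an induction on $p+q$ in the resulting long exact cohomology sequences, feeding in at each step the fact that $\wedge\eta$ is zero on cohomology at the generic point $L_0$. This replaces the invisible $d_r$'s by connecting homomorphisms that the first-order vanishing actually controls. Without some such argument (or an explicit invocation and verification of the hypotheses of the Green--Lazarsfeld/Lazarsfeld--Popa exterior-algebra lemma), the proposal does not establish the statement.
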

In another direction, there are known results concerning the structure of the jumping loci.
\begin{ft}[{\cite[Thm.~0.1 (1)]{green1991higher}}]\label{ft:Skm}For any two integers $k,m\ge0$, the subset $S^k_m(X)$
is a finite union of translates of subtori of $\Pic^0(X)$.
\end{ft}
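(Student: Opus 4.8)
The plan is to run the deformation-theoretic argument of Green and Lazarsfeld \cite{green1991higher}. As recalled above (cf.\ the proof of Theorem~\ref{thm:main}\,\ref{it:locusana}), $S^k_m(X)$ is a closed analytic subset of the complex torus $\Pic^0(X)\cong H^1(X,O_X)/H^1(X,\Z)$, so it has finitely many irreducible components and it suffices to show that each is a translate of a subtorus. The crux is a \emph{local linearity} statement: for every $\rho\in S^k_m(X)$ there are finitely many $\C$-linear subspaces $V_1,\dots,V_s$ of $H^1(X,O_X)=T_\rho\Pic^0(X)$ such that, under the exponential map $\exp\colon H^1(X,O_X)\to\Pic^0(X)$, the germ of $S^k_m(X)$ at $\rho$ equals that of $\bigcup_{j=1}^{s}\rho\otimes\exp(V_j)$. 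Granting this, fix an irreducible component $Z$ and a smooth point $\rho_0\in Z$: at each smooth point the germ of $Z$ is irreducible, hence linear (a translate of the image under $\exp$ of a $\C$-linear subspace of $H^1(X,O_X)$), so the Gauss map of the connected smooth locus of $Z$ is constant, with value a fixed $\C$-subspace $V\subseteq H^1(X,O_X)$ of dimension $\dim_\C Z$. For small $w\in V$ the translate $\exp(w)\otimes Z$ then agrees with $Z$ near $\exp(w)\otimes\rho_0$, hence everywhere by irreducibility; so the closed subgroup $\{t\in\Pic^0(X):t\otimes Z=Z\}$ contains $A:=\overline{\exp(V)}$, a subtorus, and as $A$ stabilizes $Z$ while $\dim_\C A\ge\dim_\C V=\dim_\C Z$, irreducibility of $Z$ forces $Z=\rho_0\otimes A$.

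It remains to establish local linearity, and here the Kähler and unitarity hypotheses are essential. Since $X$ is Kähler, the composite $H^1(X,\R)\hookrightarrow H^1(X,\C)\twoheadrightarrow H^{0,1}(X)=H^1(X,O_X)$ is an isomorphism of real vector spaces, so every $\rho\in\Pic^0(X)$ is the isomorphism class of a flat \emph{unitary} line bundle $L_\rho$, and $H^q(X,L_\rho)$ is represented by $L_\rho$-valued harmonic forms. Identify $T_\rho\Pic^0(X)$ with $\cH^{0,1}(X)$; a deformation of $L_\rho$ in a direction $v$ in this space replaces the Dolbeault operator $\db_\rho$ by $\db_\rho+\varepsilon\,v\wedge(\cdot)$, and since $v\wedge v=0$ cup product makes $\bigl(H^\bullet(X,L_\rho),\,\cup v\bigr)$ into a complex. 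Green--Lazarsfeld's first step is to identify the tangent cone to $S^k_m(X)$ at $\rho$ with the \emph{resonance locus} $R$ consisting of those $v$ for which $\bigl(H^\bullet(X,L_\rho),\,\cup v\bigr)$ has $k$-th cohomology of dimension $\ge m$. The claims to prove are then (i) that $R$ is a finite union of $\C$-linear subspaces and (ii) that $S^k_m(X)$ coincides with $R$ near $\rho$, not merely to first order. Both rest on the fact that, $L_\rho$ being flat unitary, it carries a flat metric for which the Kähler identities hold; the resulting principle of two types ($\partial\db$-lemma with unitary coefficients) makes the differential graded object controlling the deformation --- the complex of $L_\rho$-valued forms, viewed as a module over the forms on $X$ --- formal.

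The main obstacle, and the content of \cite{green1991higher} over \cite{green1987deformation}, is point (ii): promoting the first-order computation of the tangent cone to an exact identification of germs, i.e.\ showing that there are \emph{no higher obstructions}. One must check that a class in $H^k(X,L_\rho)$ that deforms to first order along some $v\in R$ in fact lifts to a genuine class over the whole germ of the deformation. The mechanism is formality: solving the deformation equation for $\db_\rho+v\wedge(\cdot)$ order by order, the obstruction at each stage is a cohomology class that must vanish by the same Hodge-theoretic symmetry (the principle of two types for $L_\rho$) that kills the first obstruction, so the perturbation series goes through. I expect the careful bookkeeping of this inductive obstruction calculus --- organising the successive terms of the series and verifying that each obstruction lands in a space that Hodge theory forces to be zero --- to be the technically demanding part; the reduction to it, and the passage from local linearity to translates of subtori, are essentially formal.
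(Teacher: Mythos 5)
Note first that the paper itself does not prove this statement: it is recorded as Fact~\ref{ft:Skm} with a citation to Green--Lazarsfeld \cite{green1991higher}, so there is no in-paper argument to compare against. Your sketch is, in outline, a faithful reconstruction of the strategy of that reference: identify the germ of $S^k_m(X)$ at a point $[L_\rho]$ with the cohomology-jump locus of the finite-dimensional ``derivative complex'' $(H^\bullet(X,L_\rho),\cup v)$, prove the exactness of this local model using Hodge theory for the flat unitary bundle $L_\rho$ (the principle of two types, i.e.\ formality --- this is where the K\"ahler and unitarity hypotheses enter, consistently with Corollary~\ref{cor:chartoPic}), and then globalize by a Gauss-map and stabilizer argument inside the torus $\Pic^0(X)$.

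There is, however, one genuine logical misstep in how you set up the local step. Your claim (i) --- that the resonance locus $R=\{v:\dim H^k(H^\bullet(X,L_\rho),\cup v)\ge m\}$ is a finite union of $\C$-\emph{linear} subspaces --- does not follow from formality: for a general three-term complex whose differentials depend linearly on $v$, such a rank-drop locus is a homogeneous cone but need not be a union of linear subspaces. In fact, linearity of $R$ is a \emph{consequence} of the theorem (the tangent cone to a finite union of translated subtori is a union of linear subspaces), so as written your route to ``local linearity'' is circular. The repair is that you never need (i) in full strength: formality gives (ii), namely that the germ of $S^k_m$ at $\rho$ is the germ at $0$ of the cone $R$ in exponential coordinates; hence each irreducible component $Z$ is conical at each of its points, and at a point where $Z$ is smooth and locally equal to $S^k_m$, ``smooth cone with vertex at the point'' already forces the germ to be linear. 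From there your Gauss-map argument goes through verbatim. A further small point: $\overline{\exp(V)}$ is a priori only a real subtorus; that it is a complex subtorus should be extracted at the end from the identity $Z=\rho_0\otimes\overline{\exp(V)}$ together with the fact that $Z$ is a complex-analytic subset.
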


Beauville and Catanese  conjectured that for every integer $q\ge0$, $S^q(X)$ is a finite union of \emph{torsion} translates of subtori of $\Pic^0(X)$ (\cite[Problem 1.25]{catanese1991moduli} and \cite[p.1]{beauville1992annulation}). When $X$ is a projective manifold, this conjecture is proved by Simpson \cite[Sec. 5]{simpson1993subspaces}.
\begin{ft}[Simpson]
If $X$ is furthermore projective, then for any two integers $k,m\ge0$, the locus $S^k_m(X)$  is a finite union of \emph{torsion} translates of subtori of $\Pic^0(X)$.
\end{ft}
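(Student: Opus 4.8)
The plan is to deduce the statement from Fact~\ref{ft:Skm} together with the arithmetic of the moduli space of rank-one local systems on $X$, following \cite{simpson1993subspaces}.

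\medskip\noindent\emph{Reduction to a torsion point.} By Fact~\ref{ft:Skm}, $S^k_m(X)$ is a finite union of translates of subtori of $\Pic^0(X)$, so every irreducible component $V$ of it has the form $V=\tau+T$ with $T\subseteq\Pic^0(X)$ a subtorus and $\tau\in\Pic^0(X)$. If $V$ contains a single torsion point $\tau_0$, then $\tau-\tau_0\in T$, hence $V=\tau_0+T$ is a torsion translate. So the whole problem reduces to showing that each irreducible component of $S^k_m(X)$ contains a torsion point. The arithmetic needed for this is invisible on $\Pic^0(X)$ alone, so I would enlarge the arena.

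\medskip\noindent\emph{Enlarging to the character variety.} Hodge theory identifies $\Pic^0(X)$ with the group $U$ of unitary characters inside $M_B:=\car(X)=\Hom(\pi_1(X),\C^*)=H^1(X,\C^*)$. The essential structural facts are that $M_B$ depends only on the finitely presented group $\pi_1(X)$, hence is canonically an affine algebraic group over $\Q$; its identity component $M_B^0\cong(\C^*)^{b_1(X)}$ is a split $\Q$-torus, in which every subtorus is $\Q$-rational; and its torsion points are exactly the finite-order characters, which lie in $M_B(\bar\Q)$ and carry a $\mathrm{Gal}(\bar\Q/\Q)$-action. Non-abelian Hodge theory supplies homeomorphisms $M_B\cong M_{\dR}\cong M_{\mathrm{Dol}}$ with the de Rham moduli of line bundles with integrable connection and the Dolbeault moduli $M_{\mathrm{Dol}}=\Pic^0(X)\times H^0(X,\Omega_X^1)$ of rank-one Higgs bundles; these match the Higgs-zero section $\Pic^0(X)\times\{0\}$ with $U$, and match the Betti jumping loci $\Sigma^j_m(X):=\{\rho\in M_B:\dim_\C H^j(X,\C_\rho)\ge m\}$ with rank-drop loci for the hypercohomology of algebraic complexes of vector bundles on the de Rham and Dolbeault sides. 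Hence $\Sigma^j_m(X)$ is Zariski-closed in $M_B$, and a Green--Lazarsfeld-style deformation computation (the tangent cone at $\rho$ is governed by cup product with $H^1(X,\C_\rho)$, globalised for instance via the decomposition theorem for the Albanese map as in Arapura's approach) identifies $\Sigma^j_m(X)$ — and, running the same computation along the Higgs-zero section of $M_{\mathrm{Dol}}$, the coherent Nakano loci $S^{p,q}_m(X)$ as well — as finite unions of translates of subtori. In particular $S^k_m(X)=S^{0,k}_m(X)$ is such a union; and since a torsion point of $M_{\mathrm{Dol}}$ has vanishing Higgs component, proving that all these translates pass through torsion points gives the theorem.

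\medskip\noindent\emph{Forcing torsion — the main obstacle.} It remains to show that an irreducible component $V=\rho\cdot T$ of a jumping locus (with $T$ a $\Q$-rational subtorus of $M_B^0$) has torsion translation class $\bar\rho\in(M_B^0/T)(\C)$. The idea is to play the $\Q$-structure of $M_B$ against the geometry of the loci: $\mathrm{Gal}(\bar\Q/\Q)$, together with complex conjugation of characters and the $\C^*$-action of non-abelian Hodge theory (realised explicitly by rescaling Higgs fields on $M_{\mathrm{Dol}}$), permutes the \emph{finitely many} irreducible components of $\Sigma^j_m(X)$, while the structure theory behind Fact~\ref{ft:Skm} shows each such component is pulled back from a jumping locus along a morphism from $X$ to a lower-dimensional abelian variety or a curve, so that its direction $T$ is a very restricted $\Q$-rational subtorus and $\bar\rho$ lies in a quotient torus of bounded dimension. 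Combining the finiteness of the component set (whence each $V$ is defined over $\bar\Q$ and has finite Galois orbit) with the rigidity of translated subtori and with the compatibilities imposed by the Hodge symmetries forces $\bar\rho$ to be a root of unity, i.e.\ $V$ to be a torsion translate. \textbf{This last step is the main obstacle.} Because the Riemann--Hilbert homeomorphism $M_B\cong M_{\dR}$ is transcendental, the algebraicity and rationality that are plainly visible on the de Rham and Dolbeault sides must first be transported to the Betti side by a genuine rigidity argument before any Galois- and Kronecker-type reasoning — that a sufficiently stable point of a torus over $\bar\Q$ must be torsion — can be brought to bear; carrying this out, together with the identification of the components with pullbacks along fibrations, is the technical heart of \cite{simpson1993subspaces}. (One can instead reach the torsion conclusion by spreading $X$ out over a finitely generated subring of $\C$ and reducing modulo $p$, where torsion of the Picard variety over $\overline{\mathbb F}_p$ is automatic, or by positivity arguments via Hodge modules.)
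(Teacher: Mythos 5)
The paper does not prove this statement at all: it is recalled as a known Fact with a citation to \cite[Sec.~5]{simpson1993subspaces}, so there is no internal argument to compare yours against. Judged on its own terms, your proposal is a reasonable roadmap of Simpson's strategy, and your first reduction is correct: granting Fact~\ref{ft:Skm}, every component is $\tau+T$, and it suffices to produce one torsion point on it. The gap is that everything the statement adds to Fact~\ref{ft:Skm} is the torsion claim, and your third step never actually establishes it. The sentence ``combining the finiteness of the component set \dots with the rigidity of translated subtori \dots forces $\bar\rho$ to be a root of unity'' names the desired conclusion rather than deriving it: nothing in your setup explains \emph{why} a component that is Galois-stable, conjugation-stable and compatible with the $\C^*$-action must be a torsion translate. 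Stability under $\mathrm{Gal}(\bar\Q/\Q)$ alone certainly does not suffice --- a non-torsion $\Q$-point of $(\C^*)^{b_1(X)}/T$ is Galois-fixed --- so some additional input is indispensable.

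What is missing is the concrete mechanism that Simpson supplies: the moduli space of rank-one local systems carries \emph{two different} natural $\Q$-structures (Betti and de Rham, related by the transcendental Riemann--Hilbert map), each component of the jumping locus is a translated subtorus algebraic for both, and his key lemma shows that a translate of a subtorus compatible with both structures must be a torsion translate (alternatively, the spreading-out and reduction mod $p$ argument, or the Gauss--Manin flatness argument). You name these ingredients, but only parenthetically and without executing any of them, and you candidly flag the step as ``the main obstacle.'' As written, the proposal therefore proves nothing beyond Fact~\ref{ft:Skm}, which is strictly weaker than the statement; to count as a proof it would have to state and prove (or at least precisely invoke) the bi-rationality rigidity lemma and verify its hypotheses for the loci $S^k_m(X)$.
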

Some arguments of \cite{simpson1993subspaces} are of arithmetic nature, so they do not apply to the Kähler case. Campana \cite[Sec.~1.5.2]{campana2001ensembles} provided a partial answer for not only Kähler manifolds but also  for Fujiki class $\cC$ (Definition \ref{df:Fujiki}).

Later on, Wang \cite[Cor.~1.4]{wang2016torsion} answered affirmatively Beauville and Catanese's conjecture in full generality.
\begin{ft}[Wang]
For any three integers $p,q,m\ge0$, the subset  $S^{p,q}_m(X)$   of $\Pic^0(X)$ is a finite union of \emph{torsion} translates of subtori. 
\end{ft}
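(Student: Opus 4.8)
The plan is to reinterpret $S^{p,q}_m(X)$ as a Hodge-graded piece of a cohomology jump locus on a moduli space of rank-one local systems, and then to pit the arithmetic structure of that moduli space against its Hodge-theoretic symmetries, in the spirit of Simpson \cite{simpson1993subspaces} in the projective case and of Wang \cite{wang2016torsion} in general. Since $X$ is compact Kähler, every $L\in\Pic^0(X)$ carries a unique flat unitary connection, and sending $L$ to the monodromy of that connection identifies $\Pic^0(X)$, as a real torus, with the maximal compact subgroup of the Betti moduli space $M_B:=\Hom(\pi_1(X),\C^*)=\Hom(H_1(X,\Z),\C^*)$. Because $H_1(X,\Z)$ is finitely generated, $M_B$ is an affine algebraic group \emph{over $\Q$}, non-canonically isomorphic to $(\C^*)^{b_1(X)}\times A$ with $A$ a finite abelian group. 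Hodge theory of flat unitary bundles (the $\partial\bar\partial$-lemma with unitary coefficients) decomposes $H^k(X,V)\cong\bigoplus_{p+q=k}H^q(X,\Omega^p_X\otimes_{O_X}L)$ for the rank-one local system $V$ attached to $L$; passing to the Dolbeault moduli $M_{\mathrm{Dol}}:=\Pic^0(X)\times H^0(X,\Omega^1_X)$ of rank-one Higgs bundles, the scaling $\lambda\cdot(L,\theta)=(L,\lambda\theta)$ has fixed locus $\Pic^0(X)\times\{0\}$, and the weight grading of the hypercohomology $\mathbb{H}^\bullet(X,(\Omega^\bullet_X\otimes L,\wedge\theta))$ at a fixed point recovers that Hodge decomposition. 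Thus $S^{p,q}_m(X)$ is the locus in the fixed set where the weight-$p$ part jumps — a component of a Hodge-refined cohomology jump locus; and such loci are matched with Betti (equivalently de Rham) jump loci under nonabelian Hodge theory, which is where the $\Q$-structure of $M_B$ enters.

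The crude structure statement, that each $S^{p,q}_m(X)$ is a finite union of translates of subtori of $\Pic^0(X)$, follows for $p=0$ from Fact~\ref{ft:Skm}, and in general from the nonabelian Hodge comparison together with the fact that a cohomology jump locus is, in an analytic neighbourhood of any point, a finite union of translates of subgroups — provable via the differential graded Lie algebra description of jump loci due to Budur and Wang — which on the compact torus $\Pic^0(X)$ forces a global decomposition into translates of subtori. It then remains to replace ``translate of a subtorus'' by ``torsion translate''.

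For the torsion claim, fix a component $\Sigma=L_0+T$ of $S^{p,q}_m(X)$ and transport it through nonabelian Hodge theory to a component $\widetilde\Sigma=\rho_0\cdot\mathbf T$ of the corresponding jump locus in $M_B$. Every subtorus of $(\C^*)^{b_1}\times A$ is defined over $\Q$, so $\mathbf T$ is, and it suffices to show that the image $\bar\rho_0$ of $\rho_0$ in the quotient group $M_B/\mathbf T$ is torsion. On the arithmetic side, the Betti cohomology jump loci are cut out by rank conditions on the $\Q$-linear cochain complex computing the group cohomology of $\pi_1(X)$, so their union is stable under $\mathrm{Aut}(\C)$; since there are only finitely many components and $\widetilde\Sigma$, as a component of a jump locus defined over $\Q$, is defined over $\bar\Q$, a finite-index subgroup of $\mathrm{Gal}(\bar\Q/\Q)$ fixes $\widetilde\Sigma$, whence $\bar\rho_0$ lies in a number field. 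On the Hodge side, each component of a Dolbeault cohomology jump locus is invariant under the connected group $\C^*$ and therefore contains points with $\theta=0$, i.e.\ meets the unitary locus $\Hom(\pi_1(X),U(1))$; the same then holds for every $\mathrm{Aut}(\C)$-conjugate of $\widetilde\Sigma$, and reading this off in coordinates on $M_B/\mathbf T$ shows that \emph{all} archimedean absolute values of $\bar\rho_0$ equal $1$. Combined with the analogous statement at the non-archimedean places, Kronecker's theorem forces $\bar\rho_0$ to be a root of unity in each coordinate, so $\Sigma=L_0+T$ is a torsion translate.

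The step I expect to be the main obstacle is precisely this non-archimedean control in the Kähler setting: in the projective case Simpson obtains it by spreading $X$ out over a finitely generated $\Z$-algebra and reducing modulo primes, which is unavailable for a merely Kähler $X$, so the full arithmetic rigidity of the jump loci has to be recovered from $\pi_1(X)$ and the Hodge structure on cohomology alone — this is what forces one into Wang's machinery of cohomology jump loci of differential graded Lie algebras carrying a mixed Hodge structure. A subsidiary technical point, already present in the first two paragraphs, is to check that it is genuinely the Hodge-refined loci $S^{p,q}_m(X)$, and not merely the total Betti loci $S^k_m(X)$, that get matched across the Betti, de Rham and Dolbeault pictures — via the $\lambda$-connection moduli space interpolating between $M_B$ and $M_{\mathrm{Dol}}$ — and hence inherit the rigidity established above.
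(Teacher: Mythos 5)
First, a point of reference: the paper does not prove this statement at all --- it is quoted from \cite[Cor.~1.4]{wang2016torsion} in the survey of known results in Section \ref{sec:known} --- so there is no internal proof to measure your argument against, and I am judging the proposal on its own terms. Your overall strategy is the standard two-stage one: first the ``crude'' structure (finite union of translates of subtori, via Green--Lazarsfeld/Budur--Wang, consistent with Fact \ref{ft:Skm}), then the upgrade to \emph{torsion} translates by a Kronecker-type argument over all places of a number field. Your archimedean step is essentially sound as sketched: the Betti jump loci are defined over $\Q$ inside $\Hom(H_1(X,\Z),\C^*)$, so $\mathrm{Aut}(\C)$ permutes their finitely many components, each conjugate is again a component of the same locus, and each component meets the unitary locus by the $\C^*$-degeneration to $\theta=0$ on the Dolbeault side; this yields that all archimedean absolute values of $\sigma(\bar\rho_0)$ equal $1$ without having to conjugate the manifold itself.

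The genuine gap is the non-archimedean half of the Kronecker argument, which you invoke in a single clause (``combined with the analogous statement at the non-archimedean places'') and never supply. An algebraic number all of whose conjugates lie on the unit circle need not be a root of unity ($(3+4i)/5$ is the standard example), so without control at the finite places the argument stops strictly short of torsion. Your own closing paragraph concedes exactly this: Simpson's mechanism for the finite places --- spreading $X$ out over a finitely generated $\Z$-algebra and exploiting the mod-$p$ structure of the jump loci --- requires $X$ to be algebraic and is unavailable for a general compact K\"ahler manifold, and at that point you defer to ``Wang's machinery of cohomology jump loci of differential graded Lie algebras carrying a mixed Hodge structure.'' But that machinery \emph{is} the content of \cite{wang2016torsion}; citing it as the missing step makes the proposal circular rather than a proof. (Wang's actual route avoids the finite places altogether: after the structure theorem he reduces to isolated points of the jump loci and extracts their torsion from the mixed Hodge structure carried by the controlling differential graded Lie algebra, not from a product formula.) A secondary unresolved point, which you flag yourself, is that the $\Q$-structure lives on the Betti side while the $(p,q)$-refinement defining $S^{p,q}_m(X)$ lives only on the Dolbeault side, so the claim that the refined loci (and not merely the total loci $S^k_m(X)$) inherit the arithmetic rigidity still requires an argument.
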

 Hacon \cite[Cor.~4.2]{hacon2004derived} uses Fourier-Mukai transforms of coherent modules on complex abelian varieties to recover Fact \ref{ft:GL} when $X$ is a projective manifold. This    algebraic viewpoint   sheds new insight on this topic.  Similarly, as a byproduct of the theory on convolution of perverse sheaves on  abelian varieties, Krämer and Weissauer obtain a  Nakano-type generic vanishing theorem. The proof of \cite[Thm.~3.1]{kramer2015vanish} gives Fact \ref{ft:KWvanish}.
\begin{ft}\label{ft:KWvanish}  If furthermore the Albanese torus $\Alb(X)$ is algebraic, then for any two integers $p,q\ge0$ with $p+q<\dim X-r(\alpha)$, the locus $S^{p,q}(X,F)$ is contained in a finite union of translates of strict subtori of $\Pic^0(X)$.
\end{ft}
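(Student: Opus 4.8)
The plan is to adapt the Krämer--Weissauer strategy now that $A:=\Alb(X)$ is a complex abelian variety: transfer the problem to $A$, apply the generic vanishing theorem for perverse sheaves on abelian varieties to the perverse cohomology sheaves of $R\alpha_*$, and translate the outcome back to $\Pic^0(X)$ through Hodge theory of unitary local systems. Write $n:=\dim X$ and let $V$ denote the flat unitary local system underlying $F$. First I would fix the Hodge-theoretic dictionary. The Albanese map induces an isomorphism on $H^1(-,\Z)$, so $\alpha^*$ identifies $\car(A):=\Hom(\pi_1(A),\C^*)$ (connected, as $\pi_1(A)$ is free) with the identity component of $\car(X)$; for $\chi\in\car(A)$ put $V_\chi:=\alpha^*L_\chi$. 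Every $L\in\Pic^0(X)$ has vanishing integral first Chern class, hence a unique Hermitian flat connection, whose monodromy $\rho_L$ factors through $\pi_1(A)$; and $L\mapsto\rho_L$ is a real-analytic isomorphism $j\colon\Pic^0(X)\xrightarrow{\ \sim\ }\car^u$ onto the maximal compact subtorus $\car^u\subset\car(A)$. Since $\rho_L$ and $F$ are unitary, Hodge theory of the unitary local system $V_{\rho_L}\otimes_\C V$ on the compact Kähler manifold $X$ gives
\[
H^n(X,V_{\rho_L}\otimes_\C V)=\bigoplus_{p+q=n}H^q(X,\Omega_X^p\otimes_{O_X}L\otimes_{O_X}F),
\]
so $L\in S^{p,q}(X,F)$ forces $H^{p+q}(X,V\otimes_\C V_{\rho_L})\neq0$; that is, $j$ carries $S^{p,q}(X,F)$ into the local-system jumping locus $\Sigma^{p+q}:=\{\chi\in\car(A):H^{p+q}(X,V\otimes_\C V_\chi)\neq0\}$.

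Next I would bound $\Sigma^i$ for $i<n-r(\alpha)$. Set $K:=R\alpha_*(V)[n]\in D^b_c(A)$. One of the equivalent characterizations of the defect of semismallness (Section~\ref{sec:equivdf}) is that $R\alpha_*\underline{\C}_X[n]$ has perverse amplitude $[-r(\alpha),r(\alpha)]$; since the support estimate $\Supp R^k\alpha_*V\subseteq\{a\in A:\dim\alpha^{-1}(a)\ge k/2\}$ holds for any local system $V$ by proper base change, the same support and (via duality, using $V$ unitary) cosupport conditions give ${}^p\cH^t(K)=0$ for $|t|>r(\alpha)$. Each ${}^p\cH^t(K)$ with $|t|\le r(\alpha)$ is a perverse sheaf on the complex abelian variety $A$, so by the generic vanishing theorem of \cite{kramer2015vanish} there is a finite union $\mathcal{N}_t\subseteq\car(A)$ of translates of strict subtori with $H^s(A,{}^p\cH^t(K)\otimes_\C L_\chi)=0$ for all $s\neq0$ and all $\chi\notin\mathcal{N}_t$ (the negligible summands contribute nothing outside the strict subtorus-translates $p^*\car(B)$ attached to the surjections $p\colon A\to B$, $\dim B<\dim A$, from which they are built, since $L_\chi$ is then nontrivial on the positive-dimensional fibres of $p$). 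Putting $\mathcal{N}:=\bigcup_{|t|\le r(\alpha)}\mathcal{N}_t$, for $\chi\notin\mathcal{N}$ the perverse spectral sequence $E_2^{s,t}=H^s(A,{}^p\cH^t(K)\otimes_\C L_\chi)\Rightarrow H^{s+t}(A,K\otimes_\C L_\chi)=H^{s+t+n}(X,V\otimes_\C V_\chi)$ degenerates onto the row $s=0$, whence $H^i(X,V\otimes_\C V_\chi)\cong H^0(A,{}^p\cH^{\,i-n}(K)\otimes_\C L_\chi)$, which vanishes whenever $|i-n|>r(\alpha)$, in particular for every $i<n-r(\alpha)$. Thus $\Sigma^i\subseteq\mathcal{N}$ for $i<n-r(\alpha)$; since $p+q<\dim X-r(\alpha)$, the previous step gives $j(S^{p,q}(X,F))\subseteq\mathcal{N}\cap\car^u$.

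It remains to descend to $\Pic^0(X)$. Write $\mathcal{N}=\bigcup_k(\chi_kT_k)$ with $T_k\subsetneq\car(A)$ strict subtori; discarding the $k$ with $(\chi_kT_k)\cap\car^u=\varnothing$ and choosing unitary representatives $\chi_k$, one has $(\chi_kT_k)\cap\car^u=\chi_k\,T_k^u$, a translate of the maximal compact subtorus $T_k^u\subsetneq\car^u$, so $j^{-1}$ of these yield translates $a_k+B_k$ with $B_k\subsetneq\Pic^0(X)$ proper real subtori and $S^{p,q}(X,F)\subseteq\bigcup_k(a_k+B_k)$. Now $S^{p,q}(X,F)$ is a closed complex-analytic subset of the complex torus $\Pic^0(X)$ (Theorem~\ref{thm:main} \ref{it:locusana}), and a closed analytic subset contained in a translate $a+B$ of a real subtorus $B$ lies in $a+\overline B$, where $\overline B\subseteq B$ is the largest complex subtorus: for an irreducible component $W$ and a point $w_0\in W$, the smallest complex subtorus containing $W-w_0$ is generated by differences of points of $W-w_0\subseteq B$, hence lies in $B$ and so in $\overline B$, giving $W\subseteq w_0+\overline B$. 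Applying this to the finitely many irreducible components of each $S^{p,q}(X,F)\cap(a_k+B_k)$ exhibits $S^{p,q}(X,F)$ as contained in a finite union of translates of the strict complex subtori $\overline B_k\subsetneq\Pic^0(X)$, which is the assertion.

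The main obstacle I anticipate is not an isolated computation but making sure the Krämer--Weissauer perverse-sheaf package genuinely runs under only the hypothesis that $A=\Alb(X)$ is algebraic while $X$ itself is merely compact Kähler: one needs both the perverse-amplitude estimate through the defect of semismallness and the abelian-variety generic vanishing theorem to be insensitive to the non-algebraicity of $X$ (they are, since both live on $A$), and one needs the Hodge-theoretic bridge of the first step, valid for unitary local systems on any compact Kähler manifold, to faithfully transport the local-system statement on $\car(A)$ into the Dolbeault statement on $\Pic^0(X)$. The final step, where the analyticity of $S^{p,q}(X,F)$ is used to upgrade real subtori to complex ones, is the precise point at which the compact-Kähler (rather than projective) setting makes itself felt.
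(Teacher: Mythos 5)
Your proof is correct and follows the same overall route as the paper's (which establishes the more general Theorem \ref{thm:main} \ref{it:Spq} via Theorem \ref{thm:premain} and Corollary \ref{cor:speccls}): the Hodge decomposition for unitary local systems converts $S^{p,q}(X,F)$ into a piece of the character-variety locus $\Sigma^{p+q}(X,\cE)$ (the paper's (\ref{eq:capT})), the perverse amplitude bound $R\alpha_*\cE[n]\in{}^pD^{[-r(\alpha),r(\alpha)]}(\Alb(X))$ (Proposition \ref{pp:semismall}) transfers the problem to the Albanese, and Kr\"amer--Weissauer's theorem on abelian varieties (Fact \ref{ft:vanish} \ref{it:abel}) together with the degeneration of the perverse spectral sequence off the exceptional set bounds $\Sigma^{>0}$. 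The one place you genuinely diverge is the descent from $\car(A)$ back to $\Pic^0(X)$. The paper exploits the specific shape of the Kr\"amer--Weissauer exceptional sets: each is a translate of $K(B)=\ker(\car(A)\to\car(B))$ for a nonzero subtorus $B$, and Lemma \ref{lm:thintrans} observes that $K(B)\cap\car^u(A)$ corresponds exactly to the kernel of $\Pic^0(A)\to\Pic^0(B)$, hence is already a \emph{complex} subtorus, so no further work is needed. You instead treat the $T_k$ as arbitrary algebraic subtori, so intersecting with $\car^u$ only yields translates of real subtori, and you recover complex subtori by invoking the analyticity of $S^{p,q}(X,F)$ together with the fact that an irreducible compact analytic subset of a complex torus through the origin generates a complex subtorus. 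That upgrade is correct (the stabilization of the sets $W+\dots+W-W-\dots-W$ produces the subtorus), and it is more robust in that it does not depend on the particular form of the exceptional locus, but it is strictly unnecessary here and costs you an extra nontrivial lemma. Everything else --- the identification of $\Pic^0(X)$ with the unitary characters of $\pi_1(\Alb(X))$, the support/cosupport estimate via proper base change and Verdier duality, the projection formula along $\alpha$ --- matches the paper's argument; the only blemish is the notational slip where you write $H^n$ for what should be $H^{p+q}$ in the Hodge decomposition, which does not affect the conclusion you draw from it.
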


Around the  same time, by different methods Popa and Schnell \cite[Thm.~1.2]{popa2013generic} obtained precise codimension bounds.\begin{ft}
If furthermore $X$ is a projective manifold, then \[\codim_{\Pic^0(X)}(S^{p,q}(X))\ge |p+q-\dim X|-r(\alpha)\] for any two integers $p,q\ge0$. Moreover, for every $X$ there exist $p$ and $q$ for which the inequality becomes an equality.\end{ft}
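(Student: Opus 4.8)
The plan is to reproduce the mixed-Hodge-module argument of Popa and Schnell. Write $n=\dim X$. Since $X$ is projective, $A:=\Alb(X)$ is an abelian variety, $\hat A:=\Pic^0(X)$ is its dual, and every $L\in\Pic^0(X)$ is pulled back from $\hat A$ (as $\Pic^0(X)\cong\Pic^0(A)$). Let $\cP$ be the Poincaré bundle on $A\times\hat A$ and $R\hat S(-):=Rp_{\hat A,*}\big(p_A^*(-)\otimes_{O_{A\times\hat A}}\cP\big)$ the Fourier–Mukai functor $D^b_{\mathrm{coh}}(A)\to D^b_{\mathrm{coh}}(\hat A)$. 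The first step is to exhibit $S^{p,q}(X)$ as a cohomological support locus on $A$. The trivial Hodge module $\Q^H_X[n]$ has underlying filtered $\cD_X$-module $(O_X,F)$ with $\mathrm{gr}^F_{-p}\mathrm{DR}(O_X)\cong\Omega_X^p[n-p]$; pushing forward by $\alpha$ and using the projection formula, for $L\in\hat A$ one obtains
\[
H^q(X,\Omega_X^p\otimes_{O_X}L)\;\cong\;\mathbb H^{\,p+q-n}\big(A,\ \mathrm{gr}^F_{-p}\mathrm{DR}_A\big(\alpha_+(O_X,F)\big)\otimes_{O_A}L\big),
\]
with $\alpha_+$ the $\cD$-module pushforward. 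So, setting $d:=p+q-n$, the set $S^{p,q}(X)$ is the locus of $L$ where a fixed bounded complex of coherent $O_A$-modules has nonvanishing hypercohomology in degree $d$.

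Next, apply Saito's decomposition theorem on $A$: there is a splitting $\alpha_*\Q^H_X[n]\cong\bigoplus_i M_i[-i]$ into pure Hodge modules $M_i$, and — this is the only place the defect of semismallness enters — $M_i=0$ whenever $|i|>r(\alpha)$. Indeed the same range bounds the nonzero perverse cohomology sheaves ${}^p\cH^i(R\alpha_*\Q_X[n])$, because $\alpha$ is semismall exactly when these vanish for $i\ne0$ and $r(\alpha)$ measures precisely the width of the range where they are allowed to be nonzero. Writing $(\cM_i,F)$ for the filtered $\cD_A$-module underlying $M_i$ and applying $\mathrm{gr}^F_{-p}\mathrm{DR}_A$, the complex above splits as $\bigoplus_i\mathrm{gr}^F_{-p}\mathrm{DR}_A(\cM_i)[-i]$, whence $S^{p,q}(X)=\bigcup_{|i|\le r(\alpha)}V^{\,d-i}\big(\mathrm{gr}^F_{-p}\mathrm{DR}_A(\cM_i)\big)$, where $V^j(E)$ denotes the $j$-th cohomological support locus of $E$ in $\hat A$.

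The technical heart is Popa and Schnell's theorem that for any mixed Hodge module $M$ on an abelian variety $A$, with filtered $\cD_A$-module $(\cM,F)$, each graded piece $\mathrm{gr}^F_k\cM$ is a GV-sheaf on $A$ and, more generally, each $\mathrm{gr}^F_{-p}\mathrm{DR}_A(\cM)$ is a GV-complex, i.e. $\codim_{\hat A}V^j\big(\mathrm{gr}^F_{-p}\mathrm{DR}_A(\cM)\big)\ge j$ for all $j$. Granting this, each piece in the union above has codimension $\ge d-i\ge d-r(\alpha)$, so $\codim_{\hat A}S^{p,q}(X)\ge d-r(\alpha)=(p+q-\dim X)-r(\alpha)$ when $p+q\ge\dim X$, the estimate being vacuous otherwise. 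When $p+q<\dim X$, Serre duality on $X$ gives $H^q(X,\Omega_X^p\otimes L)\cong H^{n-q}(X,\Omega_X^{n-p}\otimes L^{-1})^\vee$, so $S^{p,q}(X)$ is the image of $S^{n-p,n-q}(X)$ under $L\mapsto L^{-1}$; since $(n-p)+(n-q)>\dim X$ and this automorphism preserves codimension, the case already treated yields $\codim_{\hat A}S^{p,q}(X)\ge(\dim X-p-q)-r(\alpha)$. The two cases together give $\codim_{\hat A}S^{p,q}(X)\ge|p+q-\dim X|-r(\alpha)$.

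For the last assertion, one picks $p,q$ adapted to a stratum realizing $r(\alpha)$: there is an irreducible $Z\subseteq\alpha(X)$ over whose general point $\alpha$ has fibre dimension $l$ with $2l+\dim Z-n=r(\alpha)$, which forces the extreme summand $M_{r(\alpha)}$ to be nonzero, and a suitable graded de Rham piece of $\cM_{r(\alpha)}$ has a cohomological support locus realizing Popa and Schnell's codimension bound with equality; reading off the corresponding $p$ and $q$ turns the displayed inequality into an equality. I expect the GV assertion of the third step to be the main obstacle: proving that each $\mathrm{gr}^F_{-p}\mathrm{DR}_A(\cM)$ is a GV-complex genuinely needs Saito's machinery — strictness of the Hodge filtration under proper pushforward, its compatibility with duality, and the local structure of Hodge modules — together with the Pareschi–Popa cohomological characterization of the GV condition via $R\hat S\circ R\Delta_A$, with the triviality of $T^*A\cong A\times H^0(A,\Omega^1_A)$ making the Fourier transform of the characteristic variety tractable. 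By comparison, the range bound in the second step and the Serre-duality bookkeeping that produces the absolute value are routine once the definitions are unwound.
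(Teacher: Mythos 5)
This statement is recorded in the paper as a \emph{Fact}, cited from \cite[Thm.~1.2]{popa2013generic} with no proof given; the paper's own contribution (Theorem \ref{thm:main}) is the weaker qualitative assertion that $S^{p,q}(X,E)$ is a strict analytic (resp.\ thin) subset when $|p+q-n|>r(\alpha)$, proved by an entirely different route --- pushing a unitary local system forward to the Albanese torus and invoking Kr\"amer--Weissauer and Bhatt--Schnell--Scholze for perverse sheaves, with no mixed Hodge modules. So your proposal cannot be matched against an argument in the paper; it is a reconstruction of the proof in the cited reference. As such, the inequality half is architecturally correct and faithful to Popa--Schnell: the identification of $H^q(X,\Omega_X^p\otimes L)$ with hypercohomology of $\mathrm{gr}^F_{-p}\mathrm{DR}_A(\alpha_+(O_X,F))$ via Saito's strictness and the projection formula, the bound $|i|\le r(\alpha)$ on the nonzero summands $M_i$ (which is the Hodge-module avatar of Proposition \ref{pp:semismall}), the GV property of the graded de Rham complexes on an abelian variety, and the Serre-duality reflection $L\mapsto L^{-1}$ that produces the absolute value. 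You also correctly isolate the one genuinely hard input (the GV theorem for graded pieces of Hodge modules on abelian varieties) and attribute it.

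The genuine gap is the ``moreover'' clause. Your last paragraph asserts that ``a suitable graded de Rham piece of $\cM_{r(\alpha)}$ has a cohomological support locus realizing [the] codimension bound with equality,'' but that sentence \emph{is} the statement to be proved, and nothing in your setup forces it: a GV-complex only satisfies $\codim V^j\ge j$, and there is no formal reason any particular $j$ attains equality. That the answer is delicate is visible already in examples: for an elliptic curve the equality holds at $(p,q)=(0,0)$ but fails at $(0,1)$, while for a curve of genus $\ge 2$ it is the reverse; so the pair $(p,q)$ cannot be read off from the stratification alone. What is missing is (i) the reduction to a single nonzero GV-\emph{sheaf} --- the extremal Hodge-graded piece of the extremal summand $\cM_{\pm r(\alpha)}$, for which the graded de Rham complex collapses to one term in degree $0$ --- and (ii) an argument that a nonzero GV-sheaf attains $\codim V^j=j$ for some $j$, which in Popa--Schnell's treatment comes from the Fourier--Mukai characterization of the GV condition and the codimension behaviour of the supports of the $\mathcal{E}xt$ sheaves of the transform, not from the inequality itself. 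Without these two points the equality assertion is a restatement of the claim rather than a proof; the rest of your write-up I would accept as a correct summary of the cited argument.
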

 \subsection{The main result and a sketch}
Even though not necessarily Kähler, a complex smooth proper algebraic variety\footnote{An algebraic variety means an integral scheme of finite type and separated over a field.} also admits  Hodge theory (\cite[Prop.~5.3]{deligne1968theoreme}, \cite[Thm.~3.2.5]{deligne1971theorie}).
 It is  natural to ask if generic vanishing results also hold for such varieties. The aim of this note is to show that generic vanishing  result is not only true for Kähler manifolds, but also for  complex manifolds in Fujiki class $\cC$. This class contains compact Kähler manifolds as well as smooth proper algebraic varieties. 

Here is the main result that is of Nakano type.
\begin{thm*}[Theorem \ref{thm:main}]
Let $X$ be an $n$-dimensional complex manifold in Fujiki class $\cC$ with an Albanese morphism\footnote{reviewed in (\ref{eq:Albmor})} $\alpha:X\to\Alb(X)$, and let $F$ be a flat unitary  vector bundle on $X$. Then for any two integers $p,q\ge0$ with $p+q<n-r(\alpha)$, the locus $S^{p,q}(X,F)$ is a strict analytic subset of the complex torus $\Pic^0(X)$.
\end{thm*}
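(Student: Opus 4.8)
\textit{Sketch of an approach.} The plan is to reduce to the compact Kähler case by a modification, to rewrite the twisted cohomology as hypercohomology of an explicit complex on the torus $\Alb(X)$, and then to feed that complex into the decomposition theorem and into generic vanishing for perverse sheaves (equivalently, polarizable Hodge modules) on $\Alb(X)$; the defect of semismallness $r(\alpha)$ will surface exactly as a bound on the perverse amplitude of the complex in question. First, since $X$ lies in Fujiki class $\cC$, resolution of singularities produces a modification $\mu\colon Y\to X$ with $Y$ a connected compact Kähler manifold and $\mu$ proper and bimeromorphic. As $X$ is smooth one has $R\mu_*O_Y=O_X$ and $\mu^*\colon H^1(X,\Z)\xrightarrow{\sim} H^1(Y,\Z)$, so $\mu^*$ identifies $\Pic^0(X)$ with $\Pic^0(Y)$ and $\Alb(X)$ with $\Alb(Y)$ compatibly with the Albanese morphisms; thus $\alpha\circ\mu$ is an Albanese morphism of $Y$ and $\dim Y=\dim X=n$. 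I would also use the classical fact that $\alpha^*\colon\Pic^0(\Alb(X))\to\Pic^0(X)$ is an isomorphism: every $L\in\Pic^0(X)$ equals $\alpha^*P_L$ for a unique $P_L\in\Pic^0(\Alb(X))$, viewed as a unitary character of $\pi_1(\Alb(X))$.

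By the projection formula, $R\Gamma(X,\Omega_X^p\otimes L\otimes F)\cong R\Gamma\bigl(\Alb(X),R\alpha_*(\Omega_X^p\otimes F)\otimes P_L\bigr)$, so everything is governed by the single complex $C^{p}:=R\alpha_*(\Omega_X^p\otimes F)\in D^b(\Alb(X))$, which is meaningful without any Kähler hypothesis on $X$. To understand $C^p$ I would pass through $Y$: from $\mu_*\Omega_Y^p=\Omega_X^p$ (valid because $X$ is smooth) together with the decomposition theorem for the Kähler morphism $\mu$, the complex $C^{p}$ is a direct summand of $R(\alpha\circ\mu)_*(\Omega_Y^p\otimes\mu^*F)$, the complementary summands being supported over $\alpha(\mu(\mathrm{Exc}(\mu)))$. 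Since $\alpha\circ\mu\colon Y\to\Alb(X)$ is a Kähler morphism and $\mu^*F$ underlies a polarizable weight-zero variation of Hodge structure, Saito's theory of polarizable Hodge modules (in its compact Kähler incarnation) splits $R(\alpha\circ\mu)_*(\Omega_Y^p\otimes\mu^*F)$, hence $C^{p}$ as well, into a finite direct sum of shifted graded pieces $\mathrm{gr}^F\mathrm{DR}(N_i\otimes\mu^*F)[-i]$ of de Rham complexes of polarizable Hodge modules $N_i$ on $\Alb(X)$.

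The numerical heart of the matter is that for the \emph{main} summand $C^p$ the shifts $i$ that occur satisfy $|i|\le r(\alpha)$, and not merely $|i|\le r(\alpha\circ\mu)$: the extra fibre dimensions introduced by $\mu$ land entirely in the discarded summands supported over $\alpha(\mu(\mathrm{Exc}(\mu)))$, whereas $R\alpha_*\Q_X[n]$ — the constructible counterpart of $C^p$ — has perverse amplitude $[-r(\alpha),r(\alpha)]$ by the characterization of the defect of semismallness through $\dim(X\times_{\Alb(X)}X)$. Finally, I would invoke generic vanishing for polarizable Hodge modules (equivalently, perverse sheaves) on the complex torus $\Alb(X)$ — transporting through the maximal abelian-variety quotient of $\Alb(X)$ if one wishes to quote the algebraic statement, or using the transcendental version directly — to conclude that, for $P_L$ outside a strict analytic subset of $\Pic^0(\Alb(X))\cong\Pic^0(X)$, each $\mathbb{H}^{\bullet}(\Alb(X),\mathrm{gr}^F\mathrm{DR}(N_i\otimes\mu^*F)\otimes P_L)$ is concentrated in a single degree. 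Tracking those degrees through the decomposition and using $|i|\le r(\alpha)$ then shows that $H^q(X,\Omega_X^p\otimes L\otimes F)$ can be nonzero for such $L$ only if $p+q\ge n-r(\alpha)$; hence $S^{p,q}(X,F)$ is a proper subset of $\Pic^0(X)$ when $p+q<n-r(\alpha)$, and, combined with the analyticity established in the other part of Theorem~\ref{thm:main}, it is a strict analytic subset.

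The two delicate points, which I expect to be the main obstacles, are: (i) assembling the ``Kähler package'' — the decomposition theorem for Kähler morphisms, polarizable Hodge modules with unitary twists, and generic vanishing for perverse sheaves — over a base $\Alb(X)$ that is only a complex torus and need not be an abelian variety, whereas the textbook versions are phrased for projective morphisms and abelian varieties; and (ii) the bookkeeping that isolates the main summand $C^p$ inside $R(\alpha\circ\mu)_*(\Omega_Y^p\otimes\mu^*F)$ and certifies that its perverse amplitude is $[-r(\alpha),r(\alpha)]$ rather than the a priori larger $[-r(\alpha\circ\mu),r(\alpha\circ\mu)]$. It is precisely this second point that yields the sharp range $p+q<n-r(\alpha)$; a cruder modification argument that simply replaced $X$ by $Y$ throughout would only give the weaker bound with $r(\alpha\circ\mu)$.
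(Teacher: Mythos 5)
Your proposal is correct in outline but takes a genuinely different route from the paper: it is essentially the Hodge-module strategy of Popa--Schnell and Pareschi--Popa--Schnell, transported to Fujiki class $\cC$ by a K\"ahler modification. The paper never forms the coherent complex $R\alpha_*(\Omega_X^p\otimes F)$ and never invokes Saito's theory. Instead it uses Timmerscheidt's Hodge decomposition for \emph{unitary local systems} on manifolds in Fujiki class $\cC$ (Fact \ref{ft:FujikiHodge}): writing $F=\cE\otimes_{\C}O_X$ and $L=L_\chi$ via the unitary Riemann--Hilbert correspondence (Theorem \ref{thm:unitRH}), one has
\[H^k(X,\cE\otimes_{\C}\cL_{\chi})=\bigoplus_{p+q=k}H^q(X,\Omega_X^p\otimes_{O_X}F\otimes_{O_X}L_{\chi}),\]
so $S^{p,q}(X,F)$ is absorbed into the purely \emph{topological} jump locus $\Sigma^{p+q}(X,\cE)\cap T(X)$ as in (\ref{eq:capT}). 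From then on only the constructible pushforward $R\alpha_*\cE[n]$ matters: Proposition \ref{pp:semismall} bounds its perverse amplitude by $r(\alpha)$ directly (proper base change plus a cohomological-dimension estimate, with no decomposition theorem), and the generic vanishing theorem for perverse sheaves on complex tori of Bhatt--Schnell--Scholze (Fact \ref{ft:vanish}) concludes. Your approach would buy independence from the unitarity-specific Hodge decomposition and finer coherent information; the paper's approach buys complete avoidance of Saito's machinery over a non-projective base, at the price of being confined to unitary coefficients.

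Two caveats on your route, beyond the delicate points you already flag. First, a unitary local system of rank $>1$ need not be defined over $\R$, so $\mu^*F$ need not underlie a polarizable $\Q$- or $\R$-Hodge module; you must either pass to $F\oplus\overline{F}$ (which suffices here, since $S^{p,q}(X,F)\subset S^{p,q}(X,F\oplus\overline{F})$) or invoke complex Hodge modules. Second, your isolation of the main summand with perverse amplitude $[-r(\alpha),r(\alpha)]$ tacitly uses a Hodge-module direct image along $\alpha$ itself, which is not a K\"ahler morphism when $X$ is not K\"ahler; this can be repaired by locating the summand inside $(\alpha\circ\mu)_*$ and using that a Hodge module vanishes iff its underlying perverse sheaf does, the latter amplitude being exactly the content of Proposition \ref{pp:semismall}. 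Neither point is fatal, but both must be addressed for your argument to close.
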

For  smooth proper algebraic varieties,  the following  finer result follows from Corollary \ref{cor:agmain} and Lemma \ref{lm:thintrans}. It is not immediate from previously known generic vanishing results.
\begin{cor}
	Let	$X/\C$ be an $n$-dimensional smooth proper algebraic variety with an algebraic Albanese morphism\footnote{\label{foot:algAlb}recalled in Section \ref{sec:Moishezon}} $\alpha:X\to \Alb(X)$. Let $\cL$ be a unitary local system on the analytification $X^{\an}$, and let $F=\cL\otimes_{\C}O_{X^{\an}}$ be the corresponding holomorphic vector bundle. Then,  for any two  integers $p,q\ge0$ with $p+q<n-r(\alpha)$, the subset $S^{p,q}(X,F)$ is contained in a finite union of  translates (\emph{torsion} translates if $\cL$ is semisimple of geometric origin\footnote{\label{foot:geoori}in the sense of \cite[p.163]{beilinson2018faisceaux}}) of strict abelian subvarieties of the Picard variety\cref{foot:algAlb} $\Pic^0_{X/\C}$.\end{cor}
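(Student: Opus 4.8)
The statement will follow by combining Corollary~\ref{cor:agmain} with Lemma~\ref{lm:thintrans}, so the real content is in setting those up; here is how I would proceed. First I would verify that Theorem~\ref{thm:main} applies to $X^{\an}$: a smooth proper $\C$-variety analytifies to a Moishezon manifold, hence lies in Fujiki class $\cC$, and the algebraic Albanese morphism analytifies to an Albanese morphism of $X^{\an}$ with the same defect of semismallness $r(\alpha)$ (fibre dimensions and Betti numbers are unchanged by analytification). By the Riemann--Hilbert correspondence on the proper smooth variety $X$, the unitary local system $\cL$ underlies an algebraic vector bundle with integrable connection $\cE$ on $X$ whose analytification is $F$; and by GAGA together with the properness of $\Pic^0_{X/\C}$, the topologically trivial holomorphic line bundles on $X^{\an}$ are precisely the analytifications of the $\C$-points of $\Pic^0_{X/\C}$, with $H^q(X^{\an},\Omega^p\otimes L^{\an}\otimes F)\cong H^q(X,\Omega^p_{X/\C}\otimes L\otimes\cE)$ for each such $L$. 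Under the resulting identification $\Pic^0(X^{\an})\cong(\Pic^0_{X/\C})^{\an}$, Theorem~\ref{thm:main} gives, in the range $p+q<n-r(\alpha)$, that $S^{p,q}(X,F)$ is a strict analytic subset of $(\Pic^0_{X/\C})^{\an}$.

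The main additional input I would need — what I expect Corollary~\ref{cor:agmain} to supply — is that this strict analytic subset is contained in a finite union of translates of strict abelian subvarieties. For a unitary character $\chi\in\Pic^0(X^{\an})$ corresponding to $L$, harmonic theory on the compact Kähler manifold $X^{\an}$ puts a Hodge decomposition on $H^{p+q}(X^{\an},\cL\otimes_{\C}\C_\chi)$ — the cohomology of the again-unitary local system $\cL\otimes\C_\chi$ — whose $(p,q)$-summand is $H^q(X^{\an},\Omega^p\otimes L\otimes F)$, so $S^{p,q}(X,F)$ sits inside the cohomology support locus $\{\chi:H^{p+q}(X^{\an},\cL\otimes\C_\chi)\neq0\}$. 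I would then invoke the structure theory of cohomology support loci of unitary local systems on compact Kähler manifolds (extending Fact~\ref{ft:Skm} to arbitrary rank, in the spirit of \cite{wang2016torsion}) to see that this support locus, hence $S^{p,q}(X,F)$, is a finite union of translates of subtori; strictness of the translates is exactly the output of Theorem~\ref{thm:main}. Finally, since $X$ is smooth and proper over $\C$, $\Pic^0_{X/\C}$ is an abelian variety, so by Chow's theorem every complex subtorus of $(\Pic^0_{X/\C})^{\an}$ — and every translate of one — is the analytification of an abelian subvariety, respectively of a translate of one; this yields the general (non-torsion) conclusion.

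For the torsion refinement when $\cL$ is semisimple of geometric origin — what I expect Lemma~\ref{lm:thintrans} to package — I would descend the datum $(X,\alpha,\cL)$ to a subfield of $\C$ finitely generated over $\Q$, use that Galois conjugates of a local system of geometric origin are again of geometric origin (\cite{beilinson2018faisceaux}), deduce that the support loci above are defined over $\overline{\Q}$ and stable under $\mathrm{Aut}(\C/\Q)$, and run a Galois-descent argument in the style of \cite{simpson1993subspaces} to force each positive-dimensional component to be a \emph{torsion} translate. I regard the two hard points as: (i) upgrading the codimension-type estimate of Theorem~\ref{thm:main} to the \emph{structural} statement of Corollary~\ref{cor:agmain} for an arbitrary flat unitary $F$, which genuinely requires the cohomology-jump-loci machinery and not merely a bound; and (ii) the torsion statement, which is arithmetic and has no purely analytic proof. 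By contrast the analytification bookkeeping of the first paragraph and the Chow-algebraization of subtori are routine.
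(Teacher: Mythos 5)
Your first paragraph (Moishezon, hence Fujiki class $\cC$; GAGA identification of $\Pic^0(X^{\an})$ with $(\Pic^0_{X/\C})^{\an}$; Chow's theorem to algebraize subtori and the Albanese map) matches the paper's bookkeeping in Corollary \ref{cor:Albalg} and Proposition \ref{pp:AlbMoi}. But the structural core of your argument is not the paper's, and as written it has a gap. You appeal to ``harmonic theory on the compact K\"ahler manifold $X^{\an}$'' and to a structure theory of cohomology support loci of unitary local systems ``on compact K\"ahler manifolds'': a smooth proper variety over $\C$ need \emph{not} be projective, so $X^{\an}$ need not be K\"ahler --- it is only Moishezon. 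This is precisely the point of the whole paper; the Hodge decomposition you need is Timmerscheidt's for Fujiki class $\cC$ (Fact \ref{ft:FujikiHodge}), and the K\"ahler-manifold jump-loci structure theorems you want to quote do not directly apply. Moreover, the arbitrary-rank extension of Fact \ref{ft:Skm} that you invoke is neither proved nor cited anywhere, so even after repairing the K\"ahler issue (say by passing to a projective modification) your route rests on an unestablished input.

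The paper's actual mechanism is different and avoids both problems: push $\cL$ forward along $\alpha$ to the abelian variety $\Alb(X)$, note that $\cK:=R\alpha_*\cL[n+r(\alpha)]$ lies in ${}^pD^{\le 0}(\Alb(X))$ by Proposition \ref{pp:semismall}, and apply Kr\"amer--Weissauer's theorem (Fact \ref{ft:vanish} \ref{it:abel}, via Corollary \ref{cor:speccls} \ref{it:Sigmathin}) to conclude that $\Sigma^{>0}(\Alb(X),\cK)$ --- which contains $S^{p,q}(X^{\an},F)$ by the Hodge decomposition (\ref{eq:capT}) and the projection formula (\ref{eq:SigmaXAlb}) --- is contained in a \emph{thin} subset; Lemma \ref{lm:thintrans} then merely translates ``thin'' into ``finite union of translates of strict subtori'', and says nothing about torsion. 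You have also misassigned the torsion refinement: it is not obtained by a Simpson-style Galois descent that you would have to carry out, but is the ``arithmetic'' clause of Kr\"amer--Weissauer (\cite[Lem.~11.2 (c)]{kramer2015vanish}), applicable because the decomposition theorem guarantees that $\cK$ is again semisimple of geometric origin on $\Alb(X)$. In short: the routine parts of your proposal are fine, but the two steps you yourself flag as hard are exactly the ones where your proposed substitutes (K\"ahler jump-loci structure theory in arbitrary rank; an ad hoc Galois descent) either do not apply to $X^{\an}$ or are left unproved, whereas the paper discharges both by a single citation to the abelian-variety generic vanishing theorem after pushing forward along the Albanese map.
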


 Here is  the outline of the proof of Theorem \ref{thm:main}. By the Riemann-Hilbert correspondence restricted to  unitary objects, we pass from flat unitary vector bundles to unitary local systems. The corresponding cohomology groups are related by Hodge decomposition (Fact \ref{ft:FujikiHodge}). In this way, the initial generic vanishing problem for a flat unitary vector bundle twisted by line bundles is reduced to a generic vanishing problem for a unitary local system twisted by rank $1$ local systems.

By   pushing forward along the Albanese map,  the problem about the local system on a manifold in Fujiki class $\cC$ is converted to a problem about a complex of sheaves on a complex torus. The last problem is solved by Krämer and Weissauer \cite{kramer2015vanish} for perverse sheaves (on complex abelian varieties) and by the subsequent generalization (to all complex tori) due to Bhatt, Schnell and Scholze \cite{bhatt2018vanishing}.

This text is organized as follows. Sections \ref{sec:RH} reviews the unitary Riemann-Hilbert correspondence. Section \ref{sec:Jac} and \ref{sec:Alb} construct  the Jacobian and the Albanese map for regular manifolds, relaxing the usual Kähler condition. Several definitions of defect of semismallness are proved to be equivalent in Section \ref{sec:defect}. The work of Krämer and Weissauer on generic vanishing for perverse sheaves is recalled in Section \ref{sec:KW}. Finally in Section \ref{sec:main}, the previous results are applied to prove the main result, Theorem \ref{thm:main},  for Fujiki class $\cC$.

\section*{Acknowledgment}First, I am grateful to my supervisor Anna Cadoret. It was her constant encouragement that has made this note produced. I must thank her for carefully reading the manuscript multiple times and making numerous suggestions that have substantially improved the exposition. Many times, she indicated me how to improve the results by removing unnecessary conditions. My gratitude goes to Professor Botong Wang for answering my question on his work. I am also indebted to other friends including: Chenyu Bai, Junbang Liu, Long Liu, Qiaochu Ma, Miao Song, Mingchen Xia, Lu Yu, Hui Zhang and Junsheng Zhang.
All remaining errors and omissions are mine.
\section{Riemann-Hilbert correspondence}\label{sec:RH}
In Section \ref{sec:RH}, we review how the classical Riemann-Hilbert correspondence restricts to an equivalence between unitary local systems and flat unitary  vector bundles on  complex manifolds. The reason to introduce this restricted equivalence is that  unitary local systems on  manifolds in Fujiki class $\cC$ admit Hodge decomposition (Fact \ref{ft:FujikiHodge}).
\subsection{Unitary local systems}\label{sec:loc}
Let $X$ be a path-connected,  locally path-connected and locally simply connected topological space with a base point $x_0\in X$. Let $\Loc(X)$ be the category of local systems (of finite dimensional $\C$-vector spaces) on $X$. Let $\pi_1(X,x_0)$ be the fundamental group of $X$ at $x_0$ and $\Rep_{\C}(\pi_1(X,x_0))$ be the category  of its finite dimensional complex representations. By \cite[Cor.~1.4, p.4]{deligne2006equations}, the  functor  taking the stalk at $x_0$ gives rise to an equivalence \begin{equation}\label{eq:locmon}\Loc(X)\to \Rep_{\C}(\pi_1(X,x_0))\end{equation}  compatible with tensor products. The image under (\ref{eq:locmon}) of a local system on $X$ is called the corresponding monodromy representation.

Let $ \Rep^u_{\C}(\pi_1(X,x_0))\subset \Rep_{\C}(\pi_1(X,x_0))$ be the full subcategory of unitary representations. That means representations $\rho:\pi_1(X,x_0)\to \GL(V)$ satisfying the following equivalent\footnote{since every compact subgroup of $\GL_r(\C)$ can be conjugated into the unitary subgroup $U_r(\C)$} conditions: 
\begin{enumerate}
\item The closure of $\rho(\pi_1(X,x_0))$ inside $\GL(V)$ is compact;
\item There is a hermitian inner product $h:V\otimes_{\C}\bar{V}\to \C$ such that $\rho(\pi_1(X,x_0))$ is contained in the corresponding unitary group $U(V,h)$. 
\end{enumerate} 
Let $\Loc^u(X)$ be the  full subcategory of $ \Loc(X)$ corresponding to $\Rep^u_{\C}(\pi_1(X,x_0))$ \textit{via} the equivalence (\ref{eq:locmon}).
 Its objects   are called unitary local systems on $X$. Every unitary local system is semisimple, since every unitary representation  is so. 
\subsection{Flat unitary bundles}
 Let  $E\to X$ be a holomorphic vector bundle on a complex manifold with a hermitian metric $h$.  By \cite[Prop.~4.2.14]{huybrechts2005complex}, there exists a unique hermitian connection $\nabla_h$ that is compatible with the holomorphic structure (in the sense of \cite[Def.~4.2.12]{huybrechts2005complex}, \textit{i.e}., $\nabla^{0,1}=\db^E$), which is called the Chern connection of $(E,h)$. The corresponding curvature form, called the Chern curvature, is an  $\mathrm{End}(E,h)$-valued $(1,1)$-form, (see, \textit{e.g.}, \cite[Prop.~4.3.8 iii)]{huybrechts2005complex}). 

For every integer $k\ge0$ (resp. any two integers $i,j\ge0$), let $A^k_X$ (resp. $A^{i,j}_X$) be 
the sheaf of \emph{smooth} $k$ (resp. $(i,j)$) forms on $X$. Then there is a direct sum decomposition $A^k_X=\oplus_{i+j=k}A^{i,j}_X$.  In general, a (smooth) \emph{flat} connection $\nabla$ on $E$ that is compatible with the holomorphic structure  needs not to be a holomorphic connection (in the sense of \cite[Def.~4.2.17]{huybrechts2005complex}).

\begin{lm}\label{lm:MSE4583322}
Let $E\to X$ be a holomorphic vector bundle with a  \emph{flat} connection $\nabla:E\to E\otimes_{A_X^0}A_X^1$. If $\nabla$ is compatible with the holomorphic structure, then $\nabla$ is a holomorphic connection.
\end{lm}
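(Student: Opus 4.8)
The plan is to read the statement off from the type decomposition of the curvature of $\nabla$. Since $\nabla$ is compatible with the holomorphic structure, write $\nabla=\nabla^{1,0}+\db^E$ according to bidegree, where $\nabla^{1,0}\colon E\to E\otimes_{A^0_X}A^{1,0}_X$ is the $(1,0)$-part and $\db^E\colon E\to E\otimes_{A^0_X}A^{0,1}_X$ is the Dolbeault operator of the holomorphic bundle $E$. Extending $\nabla$ to $E$-valued forms in the usual way and squaring, the curvature $F_\nabla=\nabla\circ\nabla$ decomposes as
\[F_\nabla=(\nabla^{1,0})^2+\bigl(\db^E\circ\nabla^{1,0}+\nabla^{1,0}\circ\db^E\bigr)+(\db^E)^2,\]
a sum of $\mathrm{End}(E)$-valued forms of types $(2,0)$, $(1,1)$ and $(0,2)$ respectively; the last summand vanishes since $(\db^E)^2=0$. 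Because $\nabla$ is flat, $F_\nabla=0$, so in particular its $(1,1)$-part vanishes: $\db^E\circ\nabla^{1,0}+\nabla^{1,0}\circ\db^E=0$.

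First I would deduce that $\nabla^{1,0}$ carries holomorphic sections of $E$ to holomorphic sections of $E\otimes_{O_X}\Omega^1_X$. Indeed, if $s$ is a local holomorphic section, then $\db^E s=0$, so vanishing of the $(1,1)$-curvature gives $\db^E(\nabla^{1,0}s)=0$; thus $\nabla^{1,0}s$ is a $\db^E$-closed $E$-valued $(1,0)$-form. In a local holomorphic frame $(e_a)$ of $E$ and local coordinates $(z_i)$, writing $\nabla^{1,0}s=\sum_{a,i}f_{a,i}\,e_a\otimes dz_i$ with $f_{a,i}$ smooth, the identities $\db^E e_a=0$ and $\db\,dz_i=0$ turn $\db^E(\nabla^{1,0}s)=0$ into $\sum_{a,i}e_a\otimes(\db f_{a,i}\wedge dz_i)=0$, which forces each $f_{a,i}$ to be holomorphic. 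Hence $\nabla^{1,0}s$ is a holomorphic section of $E\otimes_{O_X}\Omega^1_X$.

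Next I would observe that $\nabla^{1,0}$ therefore restricts to a $\C$-linear map $E\to E\otimes_{O_X}\Omega^1_X$ of the underlying holomorphic sheaves, which inherits the Leibniz rule from $\nabla$ (tested against holomorphic functions); this is precisely a holomorphic connection on $E$ in the sense of \cite[Def.~4.2.17]{huybrechts2005complex}, and the smooth connection it induces is $\nabla^{1,0}+\db^E=\nabla$. Therefore $\nabla$ is a holomorphic connection.

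I expect no real obstacle here: the delicate points are merely the bookkeeping of bidegrees and the sign conventions for $\db^E$ on $E$-valued forms, together with checking that the terminology matches \cite{huybrechts2005complex}. The argument is purely formal, and in fact flatness is stronger than necessary: only the vanishing of the $(1,1)$-part of the curvature — the integrability $(\db^E)^2=0$ being automatic — is used.
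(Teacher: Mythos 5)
Your proposal is correct and follows essentially the same route as the paper: the paper writes the connection matrix $\Omega$ in a local holomorphic frame, notes $\Omega^{0,1}=0$ by compatibility, and extracts the $(1,1)$-part of $d\Omega+\Omega\wedge\Omega=0$ to get $\db\Omega=0$, which is exactly your vanishing of the $(1,1)$-curvature expressed in a frame. Your closing remark that only the $(1,1)$-part of flatness is used is likewise implicit in the paper's argument.
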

\begin{proof}
 Take a local holomorphic frame $\{e_1,\dots,e_r\}$ of $E$, and denote the corresponding local smooth connection matrix  $1$-form by $\Omega$. As $\nabla^{0,1}=\bar{\partial}^E$, one has $\Omega^{0,1}=0$. By flatness, $d\Omega+\Omega\wedge \Omega=0$. Taking the $(1,1)$ part of it, one gets $\bar{\partial}\Omega=0$, \textit{i.e}., $\Omega$ is a holomorphic form. This shows that $\nabla$ is holomorphic.
\end{proof}
Let $\Mod(O_X)$ be the category of $O_X$-modules, and let $\VB(X)\subset \Mod(O_X)$ be the full subcategory of  finite locally free $O_X$-modules. Let $\DE(X)$ be the category of holomorphic vector bundles with a flat \emph{holomorphic} connection. Forgetting the connection gives a  functor $\DE(X)\to \VB(X)$. Let $\DE^u(X)\subset \DE(X)$ be the full subcategory comprised of objects $(F,\nabla)$ such that there exists a hermitian metric on $F$ whose Chern connection is $\nabla$. 
\begin{df}\label{df:flatuni}An object in the essential image of $\DE^u(X)$ under the forgetful functor $\DE(X)\to \VB(X)$ is called a \emph{flat unitary} vector bundle on $X$.\end{df} From \cite[Eg.~4.2.15]{huybrechts2005complex}, the trivial line bundle $O_X$ is flat unitary. By  Lemma \ref{lm:MSE4583322}, a holomorphic vector bundle is flat unitary  if and only if it admits a hermitian metric whose Chern connection is flat.
\subsection{An equivalence}
Let $X$ be a connected complex manifold. By the  Riemann-Hilbert correspondence (\cite[Thm.~2.17, p.12]{deligne2006equations}), the pair of functors \begin{gather}\Loc(X)\to \DE(X),\quad  \cL\mapsto (\cL\otimes_{\C}O_X,\Id_{\cL}\otimes d);\label{eq:LoctoDE}\\
	\DE(X)\to \Loc(X),\quad (E,\nabla)\mapsto \ker(\nabla)\label{eq:DEtoLoc}
\end{gather} forms an equivalence of categories. It is compatible with tensor products and preserves the rank.

\begin{thm}[Unitary Riemann-Hilbert correspondence]\label{thm:unitRH}
	The equivalence (\ref{eq:LoctoDE}), (\ref{eq:DEtoLoc}) restricts to an equivalence between $\Loc^u(X)$ and $\DE^u(X)$. 
\end{thm}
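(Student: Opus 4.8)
The plan is to prove that each of the two quasi-inverse functors (\ref{eq:LoctoDE}), (\ref{eq:DEtoLoc}) carries unitary objects to unitary objects; since $\Loc^u(X)\subset\Loc(X)$ and $\DE^u(X)\subset\DE(X)$ are full subcategories, this is exactly what is needed for the equivalence to restrict. So there are only two implications to check, and I expect both to be essentially formal once the right metric is written down.

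For the first implication, suppose $\cL\in\Loc^u(X)$ with monodromy $\rho:\pi_1(X,x_0)\to\GL(V)$, $V=\cL_{x_0}$, preserving a hermitian inner product $h_0$ on $V$ (condition (2) in the definition of $\Rep^u_\C$). I would build a hermitian metric $h$ on $E:=\cL\otimes_\C O_X$ by descent along the universal cover $p:\tilde X\to X$: there $p^*E=V\otimes_\C O_{\tilde X}$, and the fiberwise-constant metric equal to $h_0$ at every point is a smooth hermitian metric on $p^*E$ which is invariant under the deck action (the deck group acts on the $V$-factor through $\rho$, and $h_0$ is $\rho$-invariant), hence descends to a hermitian metric $h$ on $E$. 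By construction the local sections of $\cL\subset E$, i.e. the flat sections for $\nabla:=\Id_\cL\otimes d$, have locally constant $h$-pairing; writing an arbitrary local section in a flat frame one sees immediately that $d\,h(s,t)=h(\nabla s,t)+h(s,\nabla t)$, so $\nabla$ is compatible with $h$ in the sense of a hermitian connection. Since also $\nabla^{0,1}=\db^E$ by definition, the uniqueness of the Chern connection (\cite[Prop.~4.2.14]{huybrechts2005complex}) forces $\nabla$ to be the Chern connection of $(E,h)$. Hence $(\cL\otimes_\C O_X,\Id_\cL\otimes d)\in\DE^u(X)$.

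For the converse, let $(E,\nabla)\in\DE^u(X)$ and choose a hermitian metric $h$ on $E$ whose Chern connection is $\nabla$. Then $\nabla$ is compatible with $h$, so $d\,h(s,t)=h(\nabla s,t)+h(s,\nabla t)$ for local smooth sections $s,t$. Applying this to flat sections — local sections of $\cL:=\ker\nabla$ — shows that $h(s,t)$ is locally constant, i.e. $h$ restricts to a $\nabla$-parallel hermitian metric on the local system $\cL$. Equivalently, parallel transport along paths preserves $h$, so the monodromy representation of $\pi_1(X,x_0)$ on $\cL_{x_0}$ takes values in the unitary group of $h_{x_0}$, which is condition (2) again. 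Therefore $\cL\in\Loc^u(X)$.

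Combining the two implications, the equivalence (\ref{eq:LoctoDE})–(\ref{eq:DEtoLoc}) restricts to mutually quasi-inverse functors between $\Loc^u(X)$ and $\DE^u(X)$, and compatibility with tensor products and the rank is inherited from the unrestricted equivalence. The only step requiring any care is the descent construction in the first implication — checking that the constant metric on $p^*E$ really descends and that the resulting $\nabla$-parallel metric has $\nabla$ as its Chern connection via the uniqueness statement; the rest is a direct unwinding of the definitions of "hermitian connection" and "unitary representation."
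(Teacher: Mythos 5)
Your proof is correct and follows essentially the same route as the paper's: construct a monodromy-invariant hermitian metric on the local system, check the flat connection is hermitian, and invoke uniqueness of the Chern connection; then for the converse use compatibility of the Chern connection with the metric to see that flat sections have locally constant pairings, so the monodromy is unitary. The only cosmetic difference is that you build the invariant metric by descent from the universal cover, whereas the paper propagates the inner product on the stalk at $x_0$ along paths --- these are the same construction in different language.
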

\begin{proof}
First, we prove that the functor (\ref{eq:LoctoDE}) sends $\Loc^u(X)$ to $\DE^u(X)$. Consider a unitary local system $\cL$ on $X$. Since the corresponding monodromy representation is unitary, we may choose a hermitian inner product $h_{x_0}$ on the stalk $\cL_{x_0}$ such that  the representation  factors through $U(\cL_{x_0},h_{x_0})$. For any $x\in X$, choose a path $\gamma$ from $x_0$ to $x$ and propagate $h_{x_0}$ along this curve, \textit{i.e}., using the linear isomorphism $\gamma_*:\cL_{x_0}\to \cL_x$ induced by $\gamma$, we translate $h_{x_0}$ to  a hermitian inner product $h_x$ of $\cL_x$. This $h_x$ is independent of the choice of  $\gamma$ by assumption. Hence a positive definite hermitian form $h$ on $\cL$ that is invariant under the monodromy action.  Then  $h$ extends naturally to a (smooth) hermitian metric $h'$ on the associated holomorphic vector bundle $\cL\otimes_{\C}O_X$ on $X$ and the corresponding flat holomorphic connection
$\Id_{\cL}\otimes d$ is a  hermitian connection.  Therefore, $\Id_{\cL}\otimes d$ is the Chern connection of $(\cL\otimes_{\C}O_X,h')$ and $(\cL\otimes_{\C}O_X,\Id_{\cL}\otimes d)\in \DE^u(X)$.

Conversely, we prove that the functor (\ref{eq:DEtoLoc}) sends $\DE^u(X)$ to $\Loc^u(X)$. Consider a holomorphic hermitian vector bundle $(E,h)$ on $X$  whose Chern   connection $\nabla_h$ is  flat. Around every point we can find a local $\nabla_h$-horizontal holomorphic frame $\{e_1,\dots,e_r\}$ of $E$. For any $1\le i,j\le r$, since the connection $\nabla_h$ is compatible with $h$, we have  \[d[h(e_i,e_j)]=h(\nabla_h e_i,e_j)+h(e_i,\nabla_h e_j)=0.\] Therefore, the local function $h(e_i,e_j)$ is locally constant and the parallel transport along every closed path on $X$ preserves the hermitian inner products on the fibers of $E$. The sheaf $\ker(\nabla_h)$  of horizontal sections of $E$ forms a  local system on $X$, whose stalks are exactly the fibers of $E$. Thus, it admits a monodromy-invariant positive definite hermitian form and is consequently unitary.\footnote{The definition of unitary local system  in \cite[p.152]{timmerscheidt1987mixed}  seems  to forget  this invariance.}
\end{proof}
\section{Hodge theory and Jacobian}\label{sec:Jac}
 In  Section \ref{sec:Jac}, we review the definition of  Jacobian  and show that for every complex manifold admitting Hodge theory (Definition \ref{df:regular}),   its Jacobian has nice expected properties. 

\subsection{Regular manifolds}\label{sec:regular}
Let $X$ be a   complex manifold. Let $d:A^{\bullet}_X\to A^{\bullet+1}_X$ be the exterior derivative. Then $d=\partial+\db$, where $\partial:A^{\bullet,\bullet}_X\to A^{\bullet+1,\bullet}_X$ and $\db:A^{\bullet,\bullet}_X\to A^{\bullet,\bullet+1}_X$ are the $(1,0)$ and $(0,1)$ part of $d$ respectively.
For every $\cE\in \Loc^u(X)$, every integer $k\ge0$, define a decreasing filtration of $A^k_X\otimes_{\C}\cE$ by \begin{equation}\label{eq:filtration}F^p=F^p(A^k_X\otimes_{\C}\cE):=\oplus_{i\ge p}A^{i,k-i}_X\otimes_{\C}\cE.
\end{equation} Then $(d\otimes\Id_{\cE})(F^p)\subset F^p$.  Therefore, this filtration induces a spectral sequence, called the Frölicher spectral sequence: \begin{equation}\label{eq:Frspe}
E_1^{p,q}=H^q(X,\Omega_X^p\otimes_{\C}\cE)\Rightarrow H^{p+q}(X,\cE),
\end{equation}
where the differential $d_1^{p,q}:E_1^{p,q}\to E_1^{p+1,q}$ is induced by the operator $\partial:A^{p,q}_X\to A^{p+1,q}_X$ on $X$. It is the classical notion in \cite[Sec. 8.3.3]{voisin2002hodge} when $\cE$ is the constant sheaf  $\C_X$.

 Although the  Hodge theory for the first cohomology groups $H^1$ suffices for most properties of the Jacobian  and the Albanese,  in the sequel we mainly work with manifolds admitting Hodge theory in all degrees. Such manifolds are called ``regular" for convenience.
\begin{df}[Regular manifold, {\cite[5.21 (2)]{deligne1975real}}]\label{df:regular} Assume that $X$ is compact. Let $\cE\in \Loc^u(X)$. 
If the following conditions are satisfied: \begin{enumerate}
	\item The corresponding spectral sequence (\ref{eq:Frspe}) degenerates at page $E_1$;
	\item For every integer $k\ge0$, the filtration induced by $F^{\bullet}(A^{\bullet}_X\otimes_{\C}\cE)$ on $H^k(X,\cE)$ gives a complex Hodge  structure of weight $k$, in particular a Hodge decomposition \begin{equation}\label{eq:decompE}
		H^k(X,\cE)=\oplus_{p+q=k}H^q(X,\Omega_X^p\otimes_{\C}\cE);
	\end{equation} 
	\item 	For any integers $p,q\ge0$, the conjugation map induces a $\C$-anti-linear isomorphism \[H^q(X,\Omega_X^p\otimes_{\C} \cE)\to H^p(X,\Omega_X^q\otimes_{\C}\cE^{\vee}),\]where  $\cE^{\vee}=\cH om(\cE,\C_X)$ is the dual local system.
\end{enumerate} Then $X$ is called $\cE$-regular (and simply \emph{regular} when $\cE=\C_X$).
\end{df}
  For instance, classical Hodge theory asserts that compact Kähler manifolds are regular (see \textit{e.g.}, \cite[Sec. 6.1.3]{voisin2002hodge}).      Because of Fact \ref{ft:deedee}, regular manifolds are also called $\partial\db$-manifolds.
\begin{ft}[$\partial\db$-lemma, {\cite[5.14, 5.21]{deligne1975real}, \cite[Prop.~3.4]{varouchas1986proprietes}, \cite[Cor.~3.2.10]{huybrechts2005complex}}]
	\label{ft:deedee}Assume that $X$ is compact. Then $X$ is regular  if and only if for every	 $d$-closed smooth $(p,q)$-form $\eta$ on $X$, the following conditions are equivalent:
\begin{enumerate}
\item $\eta$ is $d$-exact;
\item $\eta$ is $\partial$-exact;
\item $\eta$ is $\db$-exact;
\item $\eta$ is $\partial\db$-exact.
\end{enumerate}
If the above conditions hold and $\eta$ is real, then there is a real  smooth $(p-1,q-1)$-form $\rho$ on $X$ with $\eta=i\partial\db \rho$.
\end{ft}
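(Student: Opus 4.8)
The plan is to work entirely inside the bicomplex $K^{\bullet,\bullet}=(A^{\bullet,\bullet}_X(X),\partial,\db)$ of global smooth forms on the compact manifold $X$, using that $H^q(X,\Omega_X^p)$ is its Dolbeault cohomology $H^{p,q}_{\db}$, that $H^k(X,\C)$ is the cohomology of its total complex, that all these spaces are finite dimensional, and that conjugation $\omega\mapsto\bar\omega$ is a $\C$-antilinear automorphism of $K^{\bullet,\bullet}$ interchanging $K^{p,q}$ with $K^{q,p}$ and $\partial$ with $\db$. One implication is purely formal: if $\eta=\partial\db\gamma$ then $\eta=d(\db\gamma)=\partial(\db\gamma)=\db(-\partial\gamma)$, so $\partial\db$-exactness implies $d$-, $\partial$- and $\db$-exactness; and if such an $\eta$ is real (which forces $p=q$), then $\overline{\partial\db\gamma}=-\partial\db\bar\gamma$ and $\eta=\bar\eta$ give $\partial\db(\gamma+\bar\gamma)=0$, whence $\eta=\tfrac12\partial\db(\gamma-\bar\gamma)=i\,\partial\db\rho$ with $\rho:=\tfrac{1}{2i}(\gamma-\bar\gamma)$ a real $(p-1,p-1)$-form. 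So it remains to prove the two reverse directions of the equivalence with regularity, together with the implications ``$d$-exact'' (resp.\ ``$\partial$-exact'', resp.\ ``$\db$-exact'') $\Rightarrow$ ``$\partial\db$-exact''.

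To prove ``the four-way equivalence $\Rightarrow$ $X$ is regular'' I would pass through the Bott--Chern groups $\mathrm{BC}^{p,q}:=\{d\text{-closed }(p,q)\text{-forms}\}/\partial\db A^{p-1,q-1}_X(X)$ and show that the tautological maps $\mathrm{BC}^{p,q}\to H^{p,q}_{\db}$ and $\bigoplus_{p+q=k}\mathrm{BC}^{p,q}\to H^k(X,\C)$ are isomorphisms. For the first: surjectivity because, given $\db\eta=0$, the form $\partial\eta=d\eta$ is $d$-exact hence $\partial\db$-exact, say $\partial\eta=\partial\db\zeta$, and $\eta-\db\zeta$ is then $d$-closed with the same Dolbeault class; injectivity because a $d$-closed $\db$-exact form is $\partial\db$-exact. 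For the second: surjectivity by the ``staircase'' argument --- in a $d$-closed $k$-form the component of least holomorphic degree is $\db$-closed, hence by the first step can be replaced, modulo a $d$-exact form, by a $d$-closed one, and subtracting it leaves a $d$-closed remainder of strictly higher least holomorphic degree, so one iterates finitely; injectivity because if $\sum_{p+q=k}\eta^{p,q}=d\alpha$ with the $\eta^{p,q}$ $d$-closed, then $\partial\alpha^{p-1,q}$ is $d$-closed and $\partial$-exact, hence $\db$-exact, so $\eta^{p,q}=\partial\alpha^{p-1,q}+\db\alpha^{p,q-1}$ is $\db$-exact, i.e.\ zero in $\mathrm{BC}^{p,q}$. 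Combined with Dolbeault's theorem this gives $H^k(X,\C)\cong\bigoplus_{p+q=k}H^q(X,\Omega_X^p)$; a dimension count then yields the $E_1$-degeneration of (\ref{eq:Frspe}); the fact that a $d$-closed $(p,q)$-form has de Rham class in $F^pH^{p+q}\cap\overline{F^qH^{p+q}}$ yields the Hodge decomposition (\ref{eq:decompE}); and $\omega\mapsto\bar\omega$ permuting the $\mathrm{BC}^{p,q}$ yields the conjugation isomorphism of Definition \ref{df:regular}(3) (where $\cE^\vee=\C_X$).

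For the other direction, ``$X$ regular $\Rightarrow$ the four-way equivalence'', I would first get the equivalence of $d$-, $\partial$- and $\db$-exactness straight from the Hodge theory. A $d$-closed $(p,q)$-form $\eta$ has de Rham class in $F^pH^{p+q}\cap\overline{F^qH^{p+q}}$, i.e.\ in the $(p,q)$-summand of the decomposition of Definition \ref{df:regular}(2), and its image under the edge map $F^pH^{p+q}\twoheadrightarrow\mathrm{gr}^p_FH^{p+q}\cong H^{p,q}_{\db}$ is $[\eta]_{\db}$; hence $\eta$ $d$-exact forces $[\eta]_{\db}=0$, so $\eta$ is $\db$-exact, and, running the same argument with the conjugate filtration, $\partial$-exact. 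Conversely, if $\eta=\db\beta$ with $\beta$ of type $(p,q-1)$, then $\partial\beta$ is a $d$-closed $(p+1,q-1)$-form and $d\beta=\partial\beta+\eta$ gives $[\partial\beta]_{\dR}=-[\eta]_{\dR}$; since the left-hand class lies in the $(p+1,q-1)$-summand and $[\eta]_{\dR}$ in the $(p,q)$-summand, both vanish and $\eta$ is $d$-exact, and by conjugation the same holds if $\eta$ is $\partial$-exact. So the three notions coincide for $d$-closed forms.

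The step I expect to be the main obstacle is the remaining one: that these also imply $\partial\db$-exactness --- equivalently that $\mathrm{BC}^{p,q}\to H^{p,q}_{\db}$ is injective, which the arguments above do not deliver. Here I would invoke the double-complex analysis behind \cite[\S5]{deligne1975real}: a bounded double complex of $\C$-vector spaces is a direct sum of ``square'' summands, which are acyclic, and ``zigzag'' summands; because $K^{\bullet,\bullet}$ is conjugation-symmetric, the $E_1$-degeneration of (\ref{eq:Frspe}) and the opposedness of the Hodge and conjugate-Hodge filtrations recorded in Definition \ref{df:regular}(1)--(2) force every zigzag summand of $K^{\bullet,\bullet}$ to reduce to a single ``dot'', and for a double complex in this normal form the four exactness conditions on a $d$-closed form are manifestly equivalent; the ``real'' refinement then follows as in the first paragraph. (Alternatively, one reruns the inductive form-level argument of Deligne--Griffiths--Morgan--Sullivan.) This is precisely where the full strength of regularity, and not merely Hodge symmetry in degree $1$, enters.
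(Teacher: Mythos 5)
The paper does not prove Fact \ref{ft:deedee} at all: it is stated as a quoted result with pointers to \cite{deligne1975real}, \cite{varouchas1986proprietes} and \cite{huybrechts2005complex}, so there is no in-paper argument to compare yours against. Judged on its own, your reconstruction follows the standard Deligne--Griffiths--Morgan--Sullivan line and is essentially sound. The formal implications out of $\partial\db$-exactness and the real refinement $\eta=i\partial\db\rho$ are correct. The direction ``four-way equivalence $\Rightarrow$ regular'' via the Bott--Chern groups $\mathrm{BC}^{p,q}$ (isomorphisms $\mathrm{BC}^{p,q}\to H^q(X,\Omega_X^p)$ and $\bigoplus_{p+q=k}\mathrm{BC}^{p,q}\to H^k(X,\C)$, then the Fr\"olicher dimension count for $E_1$-degeneration) is complete and correct, and it is in fact more informative than a bare citation since it is exactly the mechanism the paper re-derives by hand in Corollary \ref{cor:decomp}. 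The part of the converse showing that, under regularity, $d$-, $\partial$- and $\db$-exactness coincide for $d$-closed pure-type forms is also correct: it only uses that the de Rham class of a $d$-closed $(p,q)$-form lies in $F^p\cap\overline{F^{q}}$ together with conditions (1)--(2) of Definition \ref{df:regular} and conjugation.

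The one place where your text is a pointer rather than a proof is the step you yourself flag: that under regularity a $d$-closed, $\db$-exact $(p,q)$-form is already $\partial\db$-exact. Your Hodge-theoretic arguments cannot reach this, since they only see de Rham and Dolbeault classes while $\partial\db$-exactness is a finer, form-level (Bott--Chern) statement. The tool you name is the right one, and the claim you invoke is true: by conjugation symmetry of $(A_X^{\bullet,\bullet}(X),\partial,\db)$, $E_1$-degeneration of (\ref{eq:Frspe}) gives degeneration of the conjugate spectral sequence as well, and together with the $k$-opposedness of $F$ and $\overline{F}$ from Definition \ref{df:regular}(2) this forces every zigzag summand to be a dot, which is the $\partial\db$-lemma. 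But as written this is an appeal to \cite[\S 5]{deligne1975real} (or to the square/zigzag structure theorem for bounded double complexes), i.e.\ to the very sources the paper cites for the whole Fact; a self-contained proof would still require either the zigzag dictionary or the inductive form-level argument of loc.\ cit. Relative to the paper, which treats the statement as a black box, this is not a defect, but you should be aware that the genuinely hard implication remains outsourced.
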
 
\begin{rk}
For Fact \ref{ft:deedee}, it is important that the decomposition (\ref{eq:decompE}) is induced by the filtration (\ref{eq:filtration}). In fact, \cite[Prop.~4.3]{ceballos2016invariant} constructs  a non-regular compact complex manifold $X$ of dimension $3$, such that  the spectral sequence (\ref{eq:Frspe}) for $\cE=\C_X$ degenerates at page $E_1$, with numerical Hodge symmetry $h^{p,q}(X)=h^{q,p}(X)$ for any two integers $p,q\ge0$. In this case, there is a non canonical decomposition of the form  (\ref{eq:decompE}).
\end{rk}
For the rest of  Section \ref{sec:regular}, \textbf{we assume that $X$ is a regular manifold}. 	For every integer $k\ge0$ (resp. any two integers $p,q\ge0$), the space of global $\partial$-closed, $\db$-closed smooth $k$ (resp. $(p,q)$) forms on $X$ is denoted by $Z^k(X)$ (resp. $Z^{p,q}(X)$). For any two integers $p,q\ge0$, the Dolbeault cohomology group $H^q(X,\Omega_X^p)$ is denoted by $H^{p,q}(X)$. 
 \begin{cor}\label{cor:decomp}   For any  integers $p,q\ge0$ and $k:=p+q$, there is a canonical commutative diagram \begin{center}
\begin{tikzcd}
	{Z^{p,q}(X)} \arrow[r, hook] \arrow[d] & Z^k(X) \arrow[d] \\
	{H^{p,q}(X)} \arrow[r,"\iota^{p,q}"]           & H^k(X),          
\end{tikzcd}
 	\end{center}
 where the first row is the natural inclusion and each vertical map is surjective.
Moreover,
\begin{equation}\label{eq:Hodge}
H^k(X,\C)=\oplus_{p+q=k}\im(\iota^{p,q}),
\end{equation} where each $\im(\iota^{p,q})$ can be identified with $H^{p,q}(X)$.
The complex conjugation map $Z^{p,q}(X)\to Z^{q,p}(X)$ descends to a $\C$-antilinear isomorphism $H^{p,q}(X)\to H^{q,p}(X)$ (Hodge symmetry). 
 \end{cor}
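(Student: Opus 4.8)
The plan is to deduce the corollary from two inputs: the identity $d=\partial+\db$, which forces $Z^{p,q}(X)\subset Z^{k}(X)\subset\ker d$ and in fact $Z^{k}(X)=\bigoplus_{p+q=k}Z^{p,q}(X)$ (because $\partial$ and $\db$ shift the bidegree in incompatible directions), and the $\partial\db$-lemma of Fact~\ref{ft:deedee} together with the Hodge decomposition (\ref{eq:decompE}) built into Definition~\ref{df:regular}. Granting this, the horizontal inclusion in the diagram is immediate, the left-hand vertical map is passage to the Dolbeault class (legitimate since $Z^{p,q}(X)\subset\ker\db$), the right-hand vertical map is passage to the de Rham class (legitimate since $Z^{k}(X)\subset\ker d$), and $\iota^{p,q}$ is the composite $H^{q}(X,\Omega_{X}^{p})=E_{1}^{p,q}=E_{\infty}^{p,q}=\mathrm{gr}_{F}^{p}H^{k}(X,\C)\hookrightarrow H^{k}(X,\C)$, where the last inclusion is the one provided by the complex Hodge structure of Definition~\ref{df:regular}(1)--(2).

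First I would establish the one genuinely substantial point, namely that every class in $H^{p,q}(X)$ admits a representative in $Z^{p,q}(X)$ (equivalently, the left-hand vertical map is surjective). Given a $\db$-closed $(p,q)$-form $\alpha$, the form $\partial\alpha$ is $d$-closed --- indeed $\partial\partial\alpha=0$ and $\db\partial\alpha=-\partial\db\alpha=0$ --- and it is $\partial$-exact, so Fact~\ref{ft:deedee} yields $\partial\alpha=\partial\db\beta$ for some $(p,q-1)$-form $\beta$; then $\alpha':=\alpha-\db\beta$ lies in $Z^{p,q}(X)$ and has the same Dolbeault class as $\alpha$. This bookkeeping with the $\partial\db$-lemma --- keeping track that every form fed to the lemma is indeed $d$-closed and of pure type --- is the step I expect to be the main (though modest) obstacle. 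Next I would check well-definedness of the de Rham class of such a representative: if $\alpha',\alpha''\in Z^{p,q}(X)$ have the same Dolbeault class, then $\alpha'-\alpha''\in Z^{p,q}(X)$ is $\db$-exact, hence $d$-exact by Fact~\ref{ft:deedee}. Thus ``the de Rham class of any $Z^{p,q}(X)$-representative'' is a well-defined map $H^{p,q}(X)\to H^{k}(X,\C)$, and unwinding the spectral-sequence description identifies it with $\iota^{p,q}$. Chasing $\eta\in Z^{p,q}(X)$ around the square both ways now produces the same class $[\eta]\in H^{k}(X,\C)$, which is the asserted commutativity; and $\iota^{p,q}$ is injective with $\im(\iota^{p,q})\cong H^{p,q}(X)$, this being precisely the content of Definition~\ref{df:regular}(1)--(2).

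It then remains to handle surjectivity of the right-hand vertical map together with (\ref{eq:Hodge}), and Hodge symmetry. For the former: by Definition~\ref{df:regular}(2) one has $H^{k}(X,\C)=\bigoplus_{p+q=k}\im(\iota^{p,q})$, so a given de Rham class can be written $\sum_{p+q=k}\iota^{p,q}(c_{p,q})$; choosing, by the previous paragraph, representatives $\alpha^{p,q}\in Z^{p,q}(X)$ of the $c_{p,q}$, the form $\sum_{p+q=k}\alpha^{p,q}$ lies in $Z^{k}(X)$ and maps to the given class, which gives surjectivity and re-expresses (\ref{eq:Hodge}) in the stated form. For Hodge symmetry I would note that complex conjugation carries $A^{p,q}_{X}$ to $A^{q,p}_{X}$ and interchanges $\partial$ with $\db$, hence restricts to a $\C$-antilinear isomorphism $Z^{p,q}(X)\to Z^{q,p}(X)$; if $\eta,\eta'\in Z^{p,q}(X)$ have equal Dolbeault class then $\eta-\eta'=\partial\db\delta$ by Fact~\ref{ft:deedee}, so $\overline{\eta}-\overline{\eta'}=\db\partial\,\overline{\delta}$ is $\db$-exact, and conjugation descends to a $\C$-antilinear map $H^{p,q}(X)\to H^{q,p}(X)$, which is an isomorphism because conjugation is an involution and which agrees with the conjugation isomorphism of Definition~\ref{df:regular}(3) specialized to $\cE=\C_{X}$ (where $\cE^{\vee}=\C_{X}$), both being induced by conjugation of forms.
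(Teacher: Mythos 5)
Your proposal is correct, and its core coincides with the paper's argument: the key step --- replacing a $\db$-closed $(p,q)$-form $\alpha$ by $\alpha-\db\beta\in Z^{p,q}(X)$ after applying Fact~\ref{ft:deedee} to the $d$-closed, $\partial$-exact form $\partial\alpha$ --- and the well-definedness check (a $\db$-exact element of $Z^{p,q}(X)$ is $d$-exact) are exactly the paper's first two applications of the $\partial\db$-lemma. Where you diverge is in how injectivity of $\iota^{p,q}$ and the decomposition (\ref{eq:Hodge}) are obtained. The paper stays entirely inside the $\partial\db$-calculus: a third application of Fact~\ref{ft:deedee} gives injectivity, a fourth (applied to the components $\partial(\beta^{p-1,q})$ of a primitive of $\sum\alpha^{p,q}$) gives directness of the sum $\sum_{p+q=k}\im(\iota^{p,q})$, and only then is the degeneration hypothesis used, as a pure dimension count, to conclude that this direct sum fills up $H^k(X,\C)$. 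You instead identify the representative-level map with the edge morphism $E_1^{p,q}=E_\infty^{p,q}=\mathrm{gr}_F^pH^k\hookrightarrow H^k$ and read off injectivity and the splitting from Definition~\ref{df:regular}(1)--(2). That route is legitimate and arguably uses less of the $\partial\db$-lemma, but it shifts the burden onto the phrase ``unwinding the spectral-sequence description identifies it with $\iota^{p,q}$'': you do need to verify that for $\eta\in Z^{p,q}(X)$ the de Rham class $[\eta]$ lies in $F^pH^k$ and projects in $\mathrm{gr}_F^pH^k=E_\infty^{p,q}=E_1^{p,q}$ to the Dolbeault class of $\eta$; this is true and routine, but it is the one step you assert rather than prove, and the paper's direct argument is designed precisely to avoid it. Your treatment of the surjectivity of $Z^k(X)\to H^k(X,\C)$ and of Hodge symmetry (including the descent check via $\eta-\eta'=\partial\db\delta$, hence $\overline{\eta}-\overline{\eta'}=\db\partial\overline{\delta}$) matches the paper, and is in fact slightly more explicit than the paper's one-line conclusion for the symmetry.
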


\begin{proof}
	For each $\db$-closed $(p,q)$-form $\eta$ on $X$, $\partial \eta$ is a  $d$-closed, $\partial$-exact $(p+1,q)$-form. By Fact \ref{ft:deedee}, there is a $(p,q-1)$-form $\rho$ on $X$ with $\partial \eta+\partial\db\rho=0$, then the $(p,q)$-form $\eta+\db\rho$ is in  $Z^{p,q}(X)$. Therefore, the map taking Dolbeault cohomology class $Z^{p,q}(X)\to H^{p,q}(X)$ is  surjective. 
	
Note that $\eta+\db\rho$ is  $d$-closed. Its de Rham cohomology class is independent of the choice of $\rho$. Indeed,	if $\rho'$ is another $(p,q-1)$-form with $\eta+\db\rho'$ also $d$-closed, then $\db(\rho-\rho')$ is $d$-closed and $\db$-exact. By Fact \ref{ft:deedee}, it is $d$-exact.

Thus the map \[\iota^{p,q}:H^q(X,\Omega_X^p)\to H^{p+q}_{\dR}(X,\C),\quad [\eta]\to [\eta+\db\rho]\] is a well-defined  $\C$-linear map. By a third application of Fact \ref{ft:deedee}, the map $\iota^{p,q}$ is injective. Thus, $H^{p,q}$ is identified with $\im(\iota^{p,q})$.

	We claim that the sum $\sum_{p+q=k}\im(\iota^{p,q})$ is direct. In fact, if $\alpha^{p,q}\in Z^{p,q}(X)$  for each pair $(p,q)$ with $p+q=k$ and the de Rham class of $\sum_{p+q=k}\alpha^{p,q}$ is $0$ in $H^k_{\dR}(X,\C)$, then there is a $(k-1)$-form $\beta$ on $X$ with $d\beta=\sum_{p+q=k}\alpha^{p,q}$. Thus, \[\alpha^{p,q}=\partial(\beta^{p-1,q})+\db(\beta^{p,q-1}).\] The $\partial$-exact form $\partial(\beta^{p-1,q})$ is thereby $\db$-closed, so $d$-closed. By Fact \ref{ft:deedee} again, $\partial(\beta^{p-1,q})$ is $\db$-exact, hence $[\alpha^{p,q}]=0$ in $H^{p,q}(X)$ for every $(p,q)$. The claim is proved. 

By assumption, \[\dim_{\C}H^k(X,\C)=\sum_{p+q=k}\dim_{\C}H^{p,q}(X),\] hence the decomposition (\ref{eq:Hodge}). In particular,
the map taking de Rham cohomology class $Z^k(X)\to H^k(X,\C)$ is surjective. 
The complex conjugate of $Z^{p,q}(X)$ is exactly $Z^{q,p}(X)$,  the Hodge symmetry follows.\end{proof}
 Lemma \ref{lm:3.3.1} is   used in the proof of Corollary \ref{cor:3.3.6}. 
  \begin{lm}\label{lm:3.3.1}
For every integer $k\ge0$, the map $H^k(X,\C)\to H^k(X,O_X)$ induced by the inclusion $\C\to O_X$ coincides with the projection $H^k(X,\C)\to H^{0,k}(X)$ given by the Hodge decomposition (\ref{eq:Hodge}).
  \end{lm}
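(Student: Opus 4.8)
The plan is to compute the map $H^k(X,\C)\to H^k(X,O_X)$ at the level of the usual fine resolutions and then to match it, via Corollary \ref{cor:decomp}, against the explicit description of the summand $\im(\iota^{0,k})$ in (\ref{eq:Hodge}). Recall that $H^k(X,\C)$ is computed by the de Rham complex $(\Gamma(X,A^\bullet_X),d)$ and that $H^k(X,O_X)=H^{0,k}(X)$ is computed by the Dolbeault complex $(\Gamma(X,A^{0,\bullet}_X),\db)$, both being resolutions of the corresponding sheaves by $\Gamma$-acyclic (fine) sheaves. The key observation is that the componentwise projection $\pi\colon A^k_X\to A^{0,k}_X$ onto the $(0,k)$-part is a morphism of complexes lifting the inclusion $\C_X\hookrightarrow O_X$: writing $d=\partial+\db$ and splitting a form into its $(p,q)$-components, the only contribution to the $(0,k+1)$-component of $d\omega$ is $\db$ of the $(0,k)$-component of $\omega$, so $\pi\circ d=\db\circ\pi$, while in degree $0$ the map $\pi$ is the identity, which restricts to $\C_X\hookrightarrow O_X$ on sections. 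Hence the induced map $H^k(X,\C)\to H^{0,k}(X)$ sends a de Rham class represented by a $d$-closed smooth $k$-form $\omega$ to the Dolbeault class of its $(0,k)$-component $\omega^{0,k}$. (Equivalently, one could use the holomorphic de Rham resolution $\C_X\xrightarrow{\sim}\Omega_X^\bullet$, its projection onto the degree-zero term $O_X$, and the degeneration of the Frölicher spectral sequence; but the smooth picture matches Corollary \ref{cor:decomp} more directly.)

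Next I would feed in a convenient representative. Given $c\in H^k(X,\C)$, decompose $c=\sum_{p+q=k}c^{p,q}$ according to (\ref{eq:Hodge}). Each summand $c^{p,q}\in\im(\iota^{p,q})$ is, by the construction of $\iota^{p,q}$ in the proof of Corollary \ref{cor:decomp}, the de Rham class of some $\alpha^{p,q}\in Z^{p,q}(X)$; since $\alpha^{p,q}$ is already $d$-closed one may take $\rho=0$ in the definition of $\iota^{p,q}$, so $\iota^{p,q}([\alpha^{p,q}])=[\alpha^{p,q}]_{\dR}=c^{p,q}$, where $[\alpha^{p,q}]$ on the left denotes the Dolbeault class. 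Then $\omega:=\sum_{p+q=k}\alpha^{p,q}$ is a $d$-closed $k$-form representing $c$ whose $(0,k)$-component is precisely $\alpha^{0,k}$. By the first paragraph the image of $c$ under $\C\to O_X$ equals $[\alpha^{0,k}]\in H^{0,k}(X)$, while the projection of $c$ onto $\im(\iota^{0,k})$ followed by the identification $\im(\iota^{0,k})\xrightarrow{(\iota^{0,k})^{-1}}H^{0,k}(X)$ also yields $[\alpha^{0,k}]$. Therefore the two maps coincide.

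I do not foresee a genuine difficulty here. The only step needing care is the identification in the first paragraph of the sheaf-cohomology map with the naive $(0,k)$-truncation on global smooth forms, carried out through an explicit morphism of fine resolutions; the rest is bookkeeping with Corollary \ref{cor:decomp} and Fact \ref{ft:deedee}. One should also make sure that the identification of $\im(\iota^{0,k})$ with $H^{0,k}(X)$ implicit in the statement is the one induced by $\iota^{0,k}$ itself, as in Corollary \ref{cor:decomp}, so that no complex conjugation is inadvertently inserted.
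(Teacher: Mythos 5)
Your proof is correct and follows essentially the same route as the paper: both identify the sheaf-cohomology map with the $(0,k)$-truncation via the morphism of fine resolutions $p^{0,\bullet}\colon A^{\bullet}_X\to A^{0,\bullet}_X$, and then evaluate it on a representative in $Z^k(X)$ supplied by Corollary \ref{cor:decomp}. The only cosmetic difference is that you assemble the representative from the individual $Z^{p,q}(X)$ pieces of the Hodge decomposition, while the paper directly invokes the surjectivity of $Z^k(X)\to H^k(X,\C)$.
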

  \begin{proof}
  	Consider the following commutative diagram \begin{center}
  		\begin{tikzcd}
  			\C_X \arrow[r, hook] \arrow[d, hook] & A^0_X \arrow[r, "d"] \arrow[d, "{p^{0,0}}"] & A^1_X \arrow[r, "d"] \arrow[d, "{p^{0,1}}"] & \dots \\
  			O_X \arrow[r, hook]                  & {A^{0,0}_X} \arrow[r, "\db"]                  & {A^{0,1}_X} \arrow[r, "\db"]                  & \dots
  		\end{tikzcd}
  	\end{center}   The first row is an acyclic resolution of $\C_X$ by (smooth) Poincaré lemma, and the second row is the Dolbeault resolution.  The first vertical map is the inclusion and each $p^{0,j}:A^j_X\to A^{0,j}_X$ is taking the $(0,j)$-part of a $j$-form. It is a morphism of complexes.  Taking global sections, the induced map on $k$-th cohomology groups is the first map  in the statement.
  
  For a class $[\alpha]\in H^k(X,\C)$, we may assume that the representative $k$-form $\alpha$  is $\partial$-closed and $\db$-closed by Corollary \ref{cor:decomp}. Then its image under the first map $H^k(X,\C)\to H^k(X,O_X)$ is represented by the $(0,k)$-part of $\alpha$, which is still $\partial$-closed and $\db$-closed. This describes exactly the projection induced by the Hodge decomposition (\ref{eq:Hodge}).
  \end{proof}
\subsection{Jacobian}\label{sec:Pic0}
For  a connected compact complex manifold $X$, let $b_1(X):=\dim_{\C}H^1(X,\C)$ be its first Betti number.  The exponential short exact sequence \[0\to \Z\to O_X\overset{f\mapsto \exp(2\pi if)}{\longrightarrow} O_X^*\to 1\] 
induces a long exact sequence \begin{equation}\label{eq:long}H^0(X,O_X)\overset{f\mapsto \exp(2\pi if)}{\longrightarrow} H^0(X,O_X^*)\to H^1(X,\Z)\to H^1(X,O_X)\to H^1(X,O_X^*)\overset{\delta}{\to}H^2(X,\Z).
\end{equation}
Set $\Pic(X):=H^1(X,O_X^*)$ for the Picard group, $\NS(X):=\im(\delta)$ for the Néron-Severi group, $\Pic^0(X)=\ker(\delta)$ and $\Pic^{\tau}(X):=\delta^{-1}(H^2(X,\Z)_{\tor})$. As $X$ is compact connected, one has $H^0(X,O_X)=\C$, $H^0(X,O_X^*)=\C^*$ and the first map in (\ref{eq:long}) is surjective. Accordingly, the third map $H^1(X,\Z)\to H^1(X,O_X)$ is injective and \begin{equation}\label{eq:Pic0}
	\Pic^0(X)=\frac{H^1(X,O_X)}{H^1(X,\Z)}.
\end{equation}
 If $X$ is a complex torus, then $H^2(X,\Z)$ is torsion free and \begin{equation}\label{eq:Pic0Pictau}
\Pic^0(X)=\Pic^{\tau}(X).
\end{equation}
For general $X$, let $\Loc^1(X)$ (resp. $\Loc^{u,1}(X)$) be the set of isomorphism classes  of  rank-$1$ (resp. and unitary) local systems on $X$. Then $\Loc^1(X)$ is a group under tensor product and $\Loc^{u,1}(X)$ is a subgroup. For  each $\cL\in \Loc^1(X)$,  $L:=\cL\otimes_{\C}O_X$ is a flat line bundle on $X$. By \cite[Ch.~V, \S~9]{demailly1997complex},  $L\in \Pic^{\tau}(X)$, whence a group morphism \begin{equation}\label{eq:LoctoPic}
	\Loc^1(X)\to \Pic^{\tau}(X),\quad \cL\mapsto \cL\otimes_{\C}O_X.
\end{equation} 
\begin{rk}
	Theorem \ref{thm:unitRH} implies that a line bundle on $X$ is 
	flat unitary if and only if its  class in $\Pic(X)$ lies in the image of the restriction of (\ref{eq:LoctoPic}): \begin{equation}\label{eq:LocutoPic}
		\Loc^{u,1}(X)\to \Pic^{\tau}(X).
	\end{equation}
	 The image  of (\ref{eq:LocutoPic}) may not to be contained in $\Pic^0(X)$. For instance, let $X$ be an Enriques surface, then $\pi_1(X,x_0)=\Z/2$, $\#\Loc^{u,1}(X)=\Z/2$. By Corollary \ref{cor:chartoPic} \ref{it:Loc1Pictau} below, the map (\ref{eq:LocutoPic}) is an isomorphism, while $\Pic^0(X)$ is trivial.
\end{rk}

\begin{cor}\label{cor:3.3.6}
Assume  that $X$ is regular. Then	 $\Pic^{\tau}(X)$ has a natural structure of compact complex Lie group with identity component $\Pic^0(X)$  that is a complex torus of  dimension $b_1(X)/2$. Moreover, $\pi_0(\Pic^{\tau}(X))=\NS(X)_{\tor}$.
\end{cor}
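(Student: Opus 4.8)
The plan is to treat the identity component and the component group separately, and then glue. First I would analyse $\Pic^0(X)=H^1(X,O_X)/H^1(X,\Z)$ of (\ref{eq:Pic0}). Since $X$ is compact, $H^{\bullet}(X,\Z)$ is finitely generated; as $H^1(X,\Z)$ injects into the $\C$-vector space $H^1(X,O_X)$ (already used for (\ref{eq:Pic0})) it is free of rank $b_1(X)$, and $H^1(X,\Z)\otimes_{\Z}\C\cong H^1(X,\C)$ by universal coefficients. Regularity now enters: by the Hodge decomposition (\ref{eq:decompE}) for $\cE=\C_X$ together with the Hodge symmetry of Corollary \ref{cor:decomp}, $H^1(X,\C)=H^{1,0}(X)\oplus H^{0,1}(X)$ with $\dim_{\C}H^{1,0}(X)=\dim_{\C}H^{0,1}(X)$, hence $\dim_{\C}H^1(X,O_X)=\dim_{\C}H^{0,1}(X)=b_1(X)/2$. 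By Lemma \ref{lm:3.3.1} the natural surjection $H^1(X,\C)\to H^1(X,O_X)$ is the projection onto the summand $H^{0,1}(X)$; restricting it to the real subspace $H^1(X,\R)\subset H^1(X,\C)$, I would check that the resulting $\R$-linear map is an isomorphism. Both sides have real dimension $b_1(X)$, so injectivity suffices: complex conjugation on $H^1(X,\C)$ is induced by conjugation of differential forms, hence interchanges $H^{1,0}(X)$ and $H^{0,1}(X)$, so a class in $H^1(X,\R)$ with vanishing $(0,1)$-part also has vanishing $(1,0)$-part, i.e. is $0$. Therefore the image of $H^1(X,\Z)$ in $H^1(X,O_X)$ is a lattice of full rank $b_1(X)$, so $\Pic^0(X)$ is a compact complex torus of dimension $b_1(X)/2$.

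Next I would identify the component group. Prolonging the exact sequence (\ref{eq:long}) by one more term, the image of $\delta\colon\Pic(X)\to H^2(X,\Z)$ equals $\ker\bigl(H^2(X,\Z)\to H^2(X,O_X)\bigr)$. Since $H^2(X,O_X)$ is a $\C$-vector space, every torsion class in $H^2(X,\Z)$ lies in that kernel, so $H^2(X,\Z)_{\tor}\subseteq\NS(X)$ and hence $\NS(X)_{\tor}=H^2(X,\Z)_{\tor}$. Consequently $\delta$ restricts to a surjection $\Pic^{\tau}(X)\to\NS(X)_{\tor}$ with kernel $\ker\delta=\Pic^0(X)$, giving a short exact sequence of abelian groups
\[0\longrightarrow\Pic^0(X)\longrightarrow\Pic^{\tau}(X)\xrightarrow{\delta}\NS(X)_{\tor}\longrightarrow0,\]
in which $\NS(X)_{\tor}$ is finite (torsion in the finitely generated group $H^2(X,\Z)$).

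Finally I would equip $\Pic^{\tau}(X)$ with a complex structure. As a set it is the disjoint union of the finitely many cosets of $\Pic^0(X)$; on each coset $C$ I pick a point $p\in C$ and transport the complex manifold structure of $\Pic^0(X)$ along the bijection $x\mapsto xp$. This is independent of $p$ because every translation of $\Pic^0(X)$ by one of its own elements is biholomorphic, so the resulting complex structure on $\Pic^{\tau}(X)$ is canonical. Since $\Pic(X)=H^1(X,O_X^*)$ is abelian, in these charts the multiplication of two cosets into a third and the inversion are modelled on the holomorphic group operations of $\Pic^0(X)$, hence holomorphic; thus $\Pic^{\tau}(X)$ is a complex Lie group, compact as a finite union of compact sets. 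The subgroup $\Pic^0(X)$ is open, of finite index, and connected, so it is the identity component, and $\pi_0(\Pic^{\tau}(X))=\Pic^{\tau}(X)/\Pic^0(X)\cong\NS(X)_{\tor}$.

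I expect the first step to be the only real obstacle: everything hinges on $H^1(X,\Z)$ being a full-rank lattice in $H^1(X,O_X)$ and on $\dim_{\C}H^1(X,O_X)=b_1(X)/2$, which is exactly where the regularity hypothesis is indispensable — through the Hodge decomposition and the compatibility of complex conjugation with the Hodge filtration, as packaged in Corollary \ref{cor:decomp} and Lemma \ref{lm:3.3.1}. Once this is in place, the computation of $\pi_0$ and the assembly into a compact complex Lie group are formal.
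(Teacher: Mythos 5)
Your proposal is correct, and its core — showing that the composite $H^1(X,\R)\hookrightarrow H^1(X,\C)\to H^1(X,O_X)$ is an isomorphism by using Lemma \ref{lm:3.3.1} and the fact that conjugation swaps $H^{1,0}(X)$ and $H^{0,1}(X)$, so that $H^1(X,\Z)$ stays a full lattice — is exactly the paper's argument (the paper's maps $\phi$ and $\bar{\phi}$). The only divergences are at the end. For the Lie group structure on $\Pic^{\tau}(X)$, the paper invokes divisibility of the $\Z$-module $\Pic^0(X)$ to split the extension $0\to \Pic^0(X)\to\Pic^{\tau}(X)\to\NS(X)_{\tor}\to0$, whereas you transport the complex structure to each coset by translation; both are fine, and yours avoids choosing a splitting at the cost of checking independence of the base point of each coset (which holds since translations of a complex torus are biholomorphic). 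You also prove $\NS(X)_{\tor}=H^2(X,\Z)_{\tor}$ by extending the exponential sequence one more term and using torsion-freeness of $H^2(X,O_X)$; this is a clean argument, but it is not needed for the statement at hand (surjectivity of $\delta\colon\Pic^{\tau}(X)\to\NS(X)_{\tor}$ is immediate from the definitions $\Pic^{\tau}(X)=\delta^{-1}(H^2(X,\Z)_{\tor})$ and $\NS(X)=\im(\delta)$), and the paper proves that identity separately later, in Corollary \ref{cor:chartoPic} \ref{it:Loc1Pictau}, by a longer diagram chase.
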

\begin{proof}
	The inclusion $\R\subset O_X$ induces an $\R$-linear map \begin{equation}\label{eq:RtoOX}\phi:H^1(X,\R)\to H^1(X,O_X).\end{equation} Because of Lemma \ref{lm:3.3.1} and the Hodge symmetry in Corollary \ref{cor:decomp},  taking complex conjugate inside $H^1(X,\C)$ induces an $\R$-linear map \begin{equation}\label{eq:barphi}\bar{\phi}:H^1(X,\R)\to H^0(X,\Omega_X^1).\end{equation} If $\xi\in \ker(\phi)$, then the image of $\xi$ under the injection $H^1(X,\R)\to H^1(X,\C)$ is $\phi(\xi)+\bar{\phi}(\xi)=0$, so $\xi=0$. This shows that $\phi$ is injective. But $\dim_{\R}H^1(X,\R)=\dim_{\R} H^1(X,O_X)=b_1(X)$, so $\phi$ is a linear isomorphism. 
	
	The map $H^1(X,\Z)\to H^1(X,O_X)$ in (\ref{eq:long}) factors through $\phi$. Since $H^1(X,\Z)$ is a full lattice of $H^1(X,\R)$, it remains a full lattice in $H^1(X,O_X)$. Therefore, the quotient $\Pic^0(X)$ is a complex torus of dimension $b_1(X)/2$. The $\Z$-module $\Pic^0(X)$ is divisible, so the short exact sequence \[0\to \Pic^0(X)\to \Pic^{\tau}(X)\to \NS(X)_{\tor}\to0\] spits. Therefore, there is a natural structure of compact complex Lie group  on $\Pic^{\tau}(X)$ satisfying the stated properties.
\end{proof}
The complex torus $\Pic^0(X)$ in Corollary \ref{cor:3.3.6} is called the Jacobian of the regular manifold $X$.
\begin{eg}\label{eg:notreg}Here are two examples showing how Corollary \ref{cor:3.3.6} fails for non-regular compact complex manifolds.
	\begin{enumerate}
		\item\label{it:Hopf} Let $X$ be a Hopf surface  (\cite[Example 3.3.2]{huybrechts2005complex}). The Betti number $b_1(X)=1$, $H^1(X,\Z)=\Z$ and $H^1(X,O_X)=\C$, so the complex manifold $\Pic^0(X)=\C/\Z$ is not compact. However, by \cite{kodaira1964structure}, the Frölicher spectral sequence of $\C_X$ degenerates.
		\item  Let $Y$ be a Calabi-Eckmann manifold (\cite[Sec 1.2]{biswas2017line}). Then $H^1(Y,O_Y)=\C$ and $Y$ is simply connected, so $H^1(Y,\Z)=0$ and $b_1(Y)=0$, but $\Pic^0(Y)=\C$ is not compact and $b_1(Y)/2<\dim \Pic^0(Y)$.
	\end{enumerate}
\end{eg} 

\section{Albanese torus}\label{sec:Alb}
 We turn to the conception of Albanese torus and Albanese map. They help to reduce some problems about general complex manifolds to those about complex tori. They are also tools to study the Jacobian.   Again, Section \ref{sec:Alb} conveys the fact  that Hodge theory  guarantees the usual properties of the Albanese torus and Albanese map. 

Fix a connected regular manifold $X$ and a base point $x_0\in X$.
\subsection{Basics of Albanese torus}
From \cite[Cor.~9.5, p.101]{ueno2006classification}, every element of $H^0(X,\Omega_X^1)$ is $d$-closed, so there is a well-defined natural map \begin{equation}\label{eq:iota}\iota:H_1(X,\Z)\to H^0(X,\Omega_X^1)^{\vee},\quad [\gamma]\mapsto(\beta\mapsto \int_{\gamma}\beta),\end{equation} where $\gamma$ runs through closed paths on $X$. Set 	\begin{equation}\label{eq:Alb}
	\Alb(X)=H^0(X,\Omega_X^1)^{\vee}/\im(\iota).
\end{equation}
\begin{lm}\label{lm:Albtor}
On	$\Alb(X)$, there is  a natural structure of $h^{1,0}(X)$-dimensional complex torus with $H_1(\Alb(X),\Z)=\im(\iota)$.
\end{lm}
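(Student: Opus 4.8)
The plan is to verify the three assertions of Lemma~\ref{lm:Albtor} in turn: that $\im(\iota)$ is a full lattice in the complex vector space $H^0(X,\Omega_X^1)^{\vee}$, that the quotient therefore carries a natural complex torus structure of the stated dimension, and that its first integral homology recovers the lattice. Throughout I would use that $X$ is regular, so Corollary~\ref{cor:decomp} gives the Hodge decomposition $H^1(X,\C)=H^{1,0}(X)\oplus H^{0,1}(X)$ together with Hodge symmetry, and in particular $b_1(X)=2h^{1,0}(X)$.

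First I would identify the map $\iota$ with a more tractable one. Since every global holomorphic $1$-form is $d$-closed, integration over cycles gives the de Rham pairing, so $\iota$ factors as $H_1(X,\Z)\to H_1(X,\Z)/\mathrm{tor}\hookrightarrow H_1(X,\R)\xrightarrow{\sim} H^1(X,\R)^{\vee}\to H^0(X,\Omega_X^1)^{\vee}$, where the last arrow is dual to the inclusion (restriction of a real class to its $(1,0)$-part). Concretely, $\iota$ is the composite of the Hurewicz/period map $H_1(X,\Z)\to H^1(X,\C)^{\vee}$ with the dual of the projection $H^1(X,\C)\to H^{1,0}(X)=H^0(X,\Omega_X^1)$. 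The key input is that this projection, restricted to the real subspace $H^1(X,\R)$, is an $\R$-linear isomorphism onto $H^0(X,\Omega_X^1)$: this is exactly the statement underlying the injectivity of $\phi$ in the proof of Corollary~\ref{cor:3.3.6}, obtained from Lemma~\ref{lm:3.3.1}, Hodge symmetry, and the dimension count $\dim_\R H^1(X,\R)=b_1(X)=2h^{1,0}(X)=\dim_\R H^0(X,\Omega_X^1)$ (viewing the target as a real vector space). Dualizing, the composite $H^1(X,\R)^{\vee}\to H^0(X,\Omega_X^1)^{\vee}$ is an $\R$-linear isomorphism, and under the universal-coefficients/de Rham identification $H_1(X,\R)\cong H^1(X,\R)^{\vee}$, the image of the full lattice $\mathrm{im}(H_1(X,\Z)\to H_1(X,\R))$ is a full lattice in $H^0(X,\Omega_X^1)^{\vee}$. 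That image is precisely $\im(\iota)$.

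With $\im(\iota)$ established as a full lattice in the $h^{1,0}(X)$-dimensional complex vector space $H^0(X,\Omega_X^1)^{\vee}$, the quotient $\Alb(X)$ inherits the structure of a complex torus of complex dimension $h^{1,0}(X)$, and its universal cover is $H^0(X,\Omega_X^1)^{\vee}$ with deck group $\im(\iota)$; hence $H_1(\Alb(X),\Z)=\pi_1(\Alb(X))=\im(\iota)$ canonically. (Here one also uses that the lattice being \emph{full} — i.e. spanning over $\R$ — is what makes the quotient compact; if $X$ were not regular this could fail, as in Example~\ref{eg:notreg}.)

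The main obstacle, and the only nontrivial point, is the injectivity/surjectivity of the real-to-Dolbeault projection $H^1(X,\R)\to H^0(X,\Omega_X^1)$; injectivity says a real $1$-cohomology class whose $(1,0)$-part vanishes must be zero, which follows because its complex conjugate is then its $(0,1)$-part, and by Hodge symmetry a class equal to its own conjugate-forced-to-zero vanishes — precisely the argument used for $\phi$ in Corollary~\ref{cor:3.3.6}. Surjectivity is then automatic by the dimension count. Everything else is a formal unwinding of the de Rham isomorphism and the definition of a complex torus, so I would present those steps briefly and concentrate the exposition on invoking the regular-manifold Hodge package from Section~\ref{sec:regular}.
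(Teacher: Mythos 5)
Your proposal is correct and follows essentially the same route as the paper: identify $\iota$ with the natural map $H_1(X,\Z)\to H^1(X,\R)^{\vee}$ via the real isomorphism $\bar\phi:H^1(X,\R)\to H^0(X,\Omega_X^1)$ from the proof of Corollary \ref{cor:3.3.6} together with de Rham/Poincaré duality, conclude that $\im(\iota)$ is a full lattice, and read off the torus structure and its $H_1$. The only cosmetic difference is that you re-derive the injectivity of the real-to-$(1,0)$ projection rather than just citing the isomorphism (\ref{eq:barphi}) already established there.
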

\begin{proof}
Using the $\R$-linear isomorphism (\ref{eq:barphi}) and de Rham isomorphism \[H^1_{\dR}(X,\R)\to H^1(X,\R),\] the map (\ref{eq:iota}) is identified with the natural map $H_1(X,\Z)\to H^1_{\dR}(X,\R)^{\vee}$. The latter extends to an $\R$-linear isomorphism $H_1(X,\R)\to H^1_{\dR}(X,\R)^{\vee}$ by Poincaré duality. Therefore, \begin{equation}\label{eq:keriota}\ker(\iota)=H_1(X,\Z)_{\tor}\end{equation} and $\im(\iota)$ is a full lattice in $H^0(X,\Omega_X^1)^{\vee}$ isomorphic to $H_1(X,\Z)_{\free}$. Thus, the quotient $\Alb(X)$ is a complex torus with the stated properties.
\end{proof}
 The complex torus $\Alb(X)$ in Lemma \ref{lm:Albtor} is called the \emph{Albanese torus} of $X$. For each $x\in X$, choose two paths $\gamma_x$, $\gamma'_x$ connecting $x_0$ to $x$. Then the composition $\gamma$ of $\gamma_x$ followed by the reverse of $\gamma'_x$ is a closed path on $X$ and
\[\int_{\gamma_x}\bullet-\int_{\gamma'_x}\bullet=\int_{\gamma}\bullet=\iota([\gamma])\] belongs to $\im(\iota)$. Therefore, $[\int_{\gamma_x}\bullet]=[\int_{\gamma'_x}\bullet]$ in $\Alb(X)$. As $[\int_{\gamma_x}\bullet]$ is independent of the choice of $\gamma_x$, we write it as $\int_{x_0}^x\bullet$. For the fixed base point $x_0\in X$,  the associated Albanese map is \begin{equation}\label{eq:Albmor}\alpha_{X,x_0}:X\to \Alb(X),\quad   x\mapsto  \int_{x_0}^x\bullet.\end{equation} The subscripts $X$ and $x_0$ are omitted when they are clear from the context.
\begin{pp}\label{pp:Alb}
	\hfill\begin{enumerate}
		\item \label{it:Albcanonical}The Albanese map $\alpha_{X,x_0}:X\to \Alb(X)$ is a morphism of complex manifolds and the formation of Albanese map is functorial for the pair $(X,x_0)$. 
		\item \label{it:AlbH1}The induced morphism $\alpha_{x_0,*}:H_1(X,\Z)\to H_1(\Alb(X),\Z)$ is surjective with kernel $H_1(X,\Z)_{\tor}$.
		\item \label{it:Albuniv}The morphism $\alpha_{x_0}$ satisfies the following universal property:  every morphism of pointed complex manifolds $(X,x_0)\to (A,0)$ with $A$ a complex torus factors uniquely through a morphism of complex tori $\Alb(X)\to A$.
		 In particular, the complex subtorus of $\Alb(X)$ generated by $\alpha_{x_0}(X)$ is $\Alb(X)$.

	\item \label{it:H1=} The pullback morphism $\alpha_{x_0}^*:H^1(\Alb(X),\Z)\to H^1(X,\Z)$ is an isomorphism of weight $1$ $\Z$-Hodge structures independent of the choice of $x_0$.
		\item\label{it:AlbPic0} The pullback $\alpha_{x_0}^*:\Pic^0(\Alb(X))\to \Pic^0(X)$ is an isomorphism of complex tori independent of the choice of $x_0$. In particular, the complex tori $\Alb(X)$ and $\Pic^0(X)$ are  dual to each other.\footnote{in the sense of \cite[p.34]{birkenhake2004complex}}
\end{enumerate}
\end{pp}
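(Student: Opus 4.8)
The plan is to establish the five assertions in order, bootstrapping everything from the concrete description $\Alb(X)=H^0(X,\Omega_X^1)^\vee/\im(\iota)$ of Lemma~\ref{lm:Albtor}. For \ref{it:Albcanonical} I would first check holomorphy locally: on a simply connected coordinate chart $U\ni p$, fix one path from $x_0$ to $p$ and extend it inside $U$; then $x\mapsto\int_{x_0}^x\bullet$ is, up to an additive constant, the map $x\mapsto(\beta\mapsto\int_p^x\beta)$, and $x\mapsto\int_p^x\beta$ is a local primitive of the holomorphic (hence closed, by \cite[Cor.~9.5, p.101]{ueno2006classification}) form $\beta$, so it is holomorphic; $\C$-linearity in $\beta$ makes this a holomorphic map into $H^0(X,\Omega_X^1)^\vee$, descending to $\Alb(X)$. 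Functoriality is then formal: a pointed morphism $f\colon(X,x_0)\to(Y,y_0)$ induces $f^\ast\colon H^0(Y,\Omega_Y^1)\to H^0(X,\Omega_X^1)$, whose transpose carries $\im(\iota_X)$ into $\im(\iota_Y)$ via the identity $\int_{f\circ\gamma}\beta=\int_\gamma f^\ast\beta$; the resulting morphism of complex tori intertwines the two Albanese maps, and compatibility with compositions is immediate.

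For \ref{it:AlbH1}, tracing definitions shows that $\alpha_{x_0}$ carries a loop $\gamma$ based at $x_0$ to the loop $t\mapsto\int_{x_0}^{\gamma(t)}\bullet$ in $\Alb(X)$, whose homology class is exactly $\iota([\gamma])\in\im(\iota)=H_1(\Alb(X),\Z)$; hence $\alpha_{x_0,\ast}$ is identified with $\iota$, and the claim follows from (\ref{eq:keriota}) together with the surjectivity built into the quotient $\Alb(X)=H^0(X,\Omega_X^1)^\vee/\im(\iota)$. For \ref{it:Albuniv}, given $g\colon(X,x_0)\to(A,0)$ with $A=V/\Lambda$, I identify $H^0(A,\Omega_A^1)$ with $V^\vee$ (translation-invariant forms), so the transpose of $g^\ast$ is a $\C$-linear map $H^0(X,\Omega_X^1)^\vee\to V$; by the computation of \ref{it:Albcanonical} it sends $\im(\iota_X)$ into $\Lambda=H_1(A,\Z)$, hence descends to a morphism of complex tori $\bar g\colon\Alb(X)\to A$ with $g=\bar g\circ\alpha_{x_0}$. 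Uniqueness: two such factorizations differ by a morphism of complex tori killing $\alpha_{x_0}(X)$; since $\alpha_{x_0,\ast}\otimes\Q$ is surjective by \ref{it:AlbH1} and a morphism of complex tori is determined by its restriction to $H_1(-,\Q)$, the difference vanishes. The final sentence then follows: if the subtorus $T\subseteq\Alb(X)$ generated by $\alpha_{x_0}(X)$ were proper, the projection $\Alb(X)\to\Alb(X)/T$ and the zero morphism would both compose with $\alpha_{x_0}$ to the constant map, contradicting uniqueness unless $\Alb(X)/T=0$.

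Parts \ref{it:H1=} and \ref{it:AlbPic0} are cohomological consequences. Applying $\Hom(-,\Z)$ to the surjection $\alpha_{x_0,\ast}$ of \ref{it:AlbH1} (kernel torsion, both targets free) shows $\alpha_{x_0}^\ast\colon H^1(\Alb(X),\Z)\to H^1(X,\Z)$ is an isomorphism of abelian groups; a base-point change alters $\alpha_{x_0}$ by a translation of $\Alb(X)$, which acts trivially on $H^1$, whence independence of $x_0$. Since $\alpha_{x_0}$ is holomorphic and the complex torus $\Alb(X)$ is regular, $\alpha_{x_0}^\ast\otimes\C$ preserves the Hodge filtration; being bijective, and since $h^{1,0}(\Alb(X))=\dim\Alb(X)=h^{1,0}(X)$ by Lemma~\ref{lm:Albtor} and the Hodge symmetry of Corollary~\ref{cor:decomp}, it must match $F^1$ on both sides, so it is an isomorphism of weight~$1$ $\Z$-Hodge structures; in particular $\alpha_{x_0}^\ast\colon H^1(\Alb(X),O_{\Alb(X)})\to H^1(X,O_X)$ is an isomorphism by Lemma~\ref{lm:3.3.1}. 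Compatibility of $\alpha_{x_0}^\ast$ with the exponential sequences then gives, via (\ref{eq:Pic0}) and Corollary~\ref{cor:3.3.6}, that $\alpha_{x_0}^\ast\colon\Pic^0(\Alb(X))\to\Pic^0(X)$ is an isomorphism of complex tori, again independent of $x_0$; and since $\Pic^0(\Alb(X))$ is the dual complex torus of $\Alb(X)$, this exhibits $\Pic^0(X)$ as dual to $\Alb(X)$.

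I expect the main friction to be \ref{it:Albuniv}: making the uniqueness clause and the ``generated subtorus'' claim precise is exactly the step reconciling the abstractly defined $\Alb(X)=H^0(X,\Omega_X^1)^\vee/\im(\iota)$ with its mapping-space characterization, and it is where one must be careful that ``determined on $H_1(-,\Q)$'' genuinely suffices. The regularity hypothesis, by contrast, enters only through the dimension identity $h^{1,0}(X)=b_1(X)/2$ of Corollary~\ref{cor:decomp} and Lemma~\ref{lm:Albtor}, which is precisely what promotes the filtered injections in \ref{it:H1=} and \ref{it:AlbPic0} to isomorphisms.
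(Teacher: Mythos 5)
Your proposal is correct and follows essentially the same route as the paper: identify $\alpha_{x_0,*}$ with $\iota$ on $H_1$, dualize to get the $H^1$ isomorphism, and transport it through the exponential sequence to $\Pic^0$. The only differences are that you supply explicit arguments (local primitives for holomorphy, the transpose construction for the universal property, the filtration-plus-dimension-count for the Hodge statement) where the paper delegates to citations of Huybrechts and Birkenhake--Lange, and these fill-ins are all sound.
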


\begin{proof}
	\hfill\begin{enumerate}
\item When $X$ is Kähler, it is proved in \cite[Prop.~3.3.8]{huybrechts2005complex}. The general case is similar.
\item By Lemma \ref{lm:Albtor}, $H_1(\Alb(X),\Z)=\im(\iota)$.
Let $\gamma:[0,1]\to X$ be a   closed path on $X$ based at $x_0$.  It defines a path \[\zeta:[0,1]\to H^0(X,\Omega_X^1)^{\vee},\quad \zeta(t)= \int_{\gamma(0)}^{\gamma(t)}\bullet,\] where the  integral is along a part of $\gamma$. Then \[\zeta\pmod{\im(\iota)}=\alpha_{x_0}\circ\gamma:[0,1]\to \Alb(X).\]  Therefore, $\alpha_{x_0,*}[\gamma]=\zeta(1)-\zeta(0)=\int_{\gamma}\bullet=\iota([\gamma])$. Hence a commutative triangle \begin{equation*}
\begin{tikzcd}
	& {\im(\iota)=H_1(\Alb(X),\Z)} \arrow[rd, hook] &                            \\
	{H_1(X,\Z)} \arrow[ru, "{\alpha_{x_0,*}}"] \arrow[rr, "\iota"] &                                              & {H^0(X,\Omega_X^1)^{\vee}} 
                     \end{tikzcd}
\end{equation*}
Therefore, $\alpha_{x_0,*}$ is surjective and $\ker(\alpha_{x_0,*})= \ker(\iota)=H_1(X,\Z)_{\tor}$, where the last equality uses (\ref{eq:keriota}). 
\item The universal property follows from Point \ref{it:Albcanonical}.   Let $T$ be the complex subtorus of $\Alb(X)$ generated by $\alpha_{x_0}(X)$. Then  the pointed morphism $\alpha_{x_0}:(X,x_0)\to (T,0)$ factors through $\alpha_{x_0}:(X,x_0)\to (\Alb(X),0)$, so $T=\Alb(X)$.
\item From \cite[Thm.~1.4.1 b)]{birkenhake2004complex}, the map $\alpha_{x_0}^*:H^{1,0}(\Alb(X))\to H^{1,0}(X)$ is a $\C$-linear isomorphism. By \cite[Sec 1.3, p.13]{birkenhake2004complex}, $H^1(\Alb(X),\Z)$ is naturally isomorphic to $\Hom(\im(\iota),\Z)$. By Poincaré duality, the latter is identified with $H^1(X,\Z)$, so \[\alpha_{x_0}^*:H^1(\Alb(X),\Z)\to H^1(X,\Z)\] is an isomorphism of weight $1$ $\Z$-Hodge structures. Up to translation, different base points give rise to the same Albanese map. More precisely,  for $x\in X$, $T_{\alpha_x(x_0)}\circ \alpha_{x_0}=\alpha_x$, where \[T_a:\Alb(X)\to \Alb(X),\quad u\mapsto u+a\] is the translation by $a$ on $\Alb(X)$. The independence stated in Point \ref{it:H1=} follows.
\item As the isomorphism (\ref{eq:Pic0}) is functorial in $X$, there is a commutative diagram  with exact rows \begin{center}
\begin{tikzcd}
	{H^1(\Alb(X),\Z)} \arrow[r] \arrow[d, "\alpha_{x_0}^*"] & {H^1(\Alb(X),O_{\Alb(X)})} \arrow[r] \arrow[d, "\alpha_{x_0}^*"] & \Pic^0(\Alb(X)) \arrow[r] \arrow[d, "\alpha_{x_0}^*"] & 0 \\
	{H^1(X,\Z)} \arrow[r]                                  & {H^1(X,O_X)} \arrow[r]                                         & \Pic^0(X) \arrow[r]                                  & 0.
\end{tikzcd}
\end{center}
 By   Point \ref{it:H1=}, the left two vertical maps are isomorphisms independent of $x_0$. Therefore, the right vertical map  is an isomorphism independent of $x_0$. As $\Alb(X)$ is a complex torus, by \cite[Proposition 2.4.1]{birkenhake2004complex}, $\Pic^0(\Alb(X))$ is the  dual torus of $\Alb(X)$. As $\alpha_{x_0}^*:\Pic^0(\Alb(X))\to \Pic^0(X)$ is an isomorphism, $\Pic^0(X)$ is dual to $\Alb(X)$.
	\end{enumerate}
\end{proof}
\begin{rk}
By \cite[Cor.~9.5, p.101]{ueno2006classification}, for every connected regular manifold $X$ the formation of $\Alb(X)$ and $\alpha_{x_0}$ agrees with the construction in \cite[\S 2]{blanchard1956varietes}. Then  \cite[p.163]{blanchard1956varietes} gives another proof of the universal property stated in Proposition \ref{pp:Alb}  \ref{it:Albuniv}.
\end{rk}
 \begin{eg_contd}[\ref{eg:notreg} \ref{it:Hopf}]If $X$ were a Hopf surface, then $H_1(X,\Z)=\Z$ and $H^0(X,\Omega_X^1)=0$. Equation (\ref{eq:Alb}) would define a point and Proposition \ref{pp:Alb} \ref{it:AlbH1} would fail.\end{eg_contd}
\subsection{Back to Jacobian}
Albanese torus helps  to understand the Jacobian. Corollary \ref{cor:Poincare}  is used to show the jumping loci are analytic subsets.
\begin{cor}[Universal line bundle]\label{cor:Poincare}There exists a  unique (up to isomorphism) line bundle $L$ on $X\times \Pic^0(X)$ such that its pullback module to $\{x_0\}\times \Pic^0(X)$ is trivial and   
  for every point $y\in \Pic^0(X)$, the isomorphism class of the pullback line bundle $L|_{X\times \{y\}}$ in $\Pic(X)$ is $y$.
\end{cor}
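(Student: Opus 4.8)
The plan is to produce the universal (Poincaré) line bundle on $X \times \Pic^0(X)$ directly from the exponential sequence on the product, mimicking the classical construction for abelian varieties but using only the regularity hypothesis. First I would recall that $\Pic^0(X) = H^1(X,O_X)/H^1(X,\Z)$ by (\ref{eq:Pic0}), and that by Corollary \ref{cor:3.3.6} this is a complex torus; write $T := \Pic^0(X)$. The key input is the description of line bundles on a product: one considers the exponential sequence $0 \to \Z \to O_{X\times T} \to O_{X\times T}^* \to 1$, takes its long exact sequence in cohomology, and identifies the relevant piece of $H^1(X\times T, O_{X\times T})$ and $H^1(X \times T, \Z)$ via the Künneth formula. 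Concretely, $H^1(X\times T,\Z) = H^1(X,\Z) \oplus H^1(T,\Z)$ and, since both $X$ and $T$ are regular (a torus is Kähler), $H^1(X\times T, O_{X\times T}) = H^1(X,O_X) \oplus H^1(T,O_T)$ by the Hodge decomposition (\ref{eq:decompE}) applied to the product. The natural pairing $H^1(X,O_X) \otimes H^1(T,\C) \to H^1(X\times T,\C)$ lands in the Künneth summand, and restricting to the $(1,0)$-part of $T$ one builds a specific class in $H^1(X\times T, O_{X\times T}^*)$ whose isotypical components behave as required.

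More precisely, I would argue as follows. Since $T = V/\Lambda$ with $V = H^1(X,O_X)$ and $\Lambda = H^1(X,\Z)$, we have a canonical element in $\Hom(\Lambda, V) = \Lambda^\vee \otimes V \subset H^1(T,\Z)^\vee \otimes H^1(X,O_X)$, namely the class of the inclusion $\Lambda \hookrightarrow V$; equivalently a canonical class $\mu \in H^1(X,O_X) \otimes H^1(T,\Z) \subset H^1(X,O_X)\otimes H^1(T,\C)$. Via Künneth and the Hodge decomposition on $X\times T$, $\mu$ determines a class in $H^1(X\times T, O_{X\times T})$; I claim it lifts the class of a line bundle. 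To see this, one checks that the image of $\mu$ under the connecting map $H^1(X\times T, O_{X\times T}) \to H^2(X\times T,\Z)$ vanishes: this is because $\mu$ comes from an integral class in $H^1(X,\C)\otimes H^1(T,\Z)$, hence dies in $H^1(X\times T, O^*)$'s obstruction — more carefully, the relevant compatibility is that $\delta$ factors through $H^2(X\times T,\Z) \to H^2(X\times T, O_{X\times T})$ and the image of $\mu$ is a real (in fact integral on one factor) class in the kernel. Then choosing a lift $L \in H^1(X\times T, O_{X\times T}^*) = \Pic(X\times T)$, one normalizes by twisting with a pullback from $T$ so that $L|_{\{x_0\}\times T}$ is trivial; this normalization exists because $\Pic(X\times T) \to \Pic(\{x_0\}\times T)$ is split by $p_T^*$. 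Finally, restriction $L|_{X\times\{y\}}$ is computed by specializing the class: the Künneth component forces $[L|_{X\times\{y\}}]$ to equal the image of $y$ under $H^1(T,\C) \to H^1(X,O_X)$ composed with $H^1(X,O_X)\to \Pic^0(X)$, which by construction of $\mu$ is the identity on $T = \Pic^0(X)$ — so $[L|_{X\times\{y\}}] = y$ as desired.

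For uniqueness: if $L$ and $L'$ are two such bundles, then $M := L \otimes L'^{-1}$ restricts trivially on every $X\times\{y\}$ and on $\{x_0\}\times T$. The see-saw principle (or rather its proof via cohomology and base change, valid here since $X$ is compact) then gives $M \cong p_T^* N$ for some $N \in \Pic(T)$, and the triviality of $M|_{\{x_0\}\times T}$ forces $N \cong O_T$, hence $M \cong O_{X\times T}$. I expect the main obstacle to be the careful bookkeeping in the Künneth/Hodge identification on the product $X\times T$ — specifically, verifying that the product of two regular manifolds is regular (so that (\ref{eq:decompE}) is available on $X\times T$, with the $(0,1)$-part of $H^1(X\times T,\C)$ splitting compatibly with the two factors), and that the canonical class $\mu$ indeed has vanishing image in $H^2(X\times T,\Z)$ so that it is represented by an honest line bundle rather than merely a cohomology class. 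Once regularity of the product and this integrality are in hand, the normalization and the fiberwise computation are routine applications of the see-saw principle.
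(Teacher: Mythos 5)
Your construction step has a degree error that, as written, cannot produce the universal bundle. The class $\mu$ lives in $H^1(X,O_X)\otimes H^1(T,\Z)$, and under Künneth a tensor of two degree-one classes lands in $H^2(X\times T,\cdot)$, not in $H^1(X\times T,O_{X\times T})$; there is no natural map from $H^1(X,O_X)\otimes H^1(T,\Z)$ to $H^1(X\times T,O_{X\times T})$ (whose Künneth decomposition is $H^1(X,O_X)\otimes H^0(T,O_T)\oplus H^0(X,O_X)\otimes H^1(T,O_T)$, with no cross term). More seriously, any line bundle obtained by exponentiating a class of $H^1(X\times T,O_{X\times T})$ lies in $\Pic^0(X\times T)$, i.e.\ is topologically trivial; but the universal bundle has $c_1$ equal to the canonical element of $H^1(X,\Z)\otimes H^1(T,\Z)\subset H^2(X\times T,\Z)$, which is nonzero whenever $b_1(X)>0$. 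The viable version of your direct approach must instead exhibit that canonical class in $H^2(X\times T,\Z)$ as the first Chern class of a line bundle, i.e.\ check that its image in $H^2(X\times T,O_{X\times T})$ vanishes (a Lefschetz $(1,1)$ argument: the relevant Künneth component lands in $H^{0,1}(X)\otimes H^{0,1}(T)=V\otimes\bar V^{\vee}$, where the identity of $\Lambda_{\C}=V\oplus\bar V$ has no mixed component). That step requires the Hodge decomposition on the \emph{product} $X\times T$, so you would also have to prove that the product of a regular manifold with a complex torus is regular --- a point you flag as "the main obstacle" but do not address. Finally, even with $c_1(L)$ correct and $L|_{X\times\{0\}}$ trivial, identifying $L|_{X\times\{y\}}$ with $y$ itself (rather than the image of $y$ under some endomorphism of $T$) needs the extra observation that the normalized classifying map $T\to\Pic^0(X)$ is a homomorphism of tori inducing the identity on $H_1$.

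The paper avoids all of this: it takes the Poincaré bundle $\cP$ on $\Alb(X)\times\Pic^0(X)$, which exists by the duality $\Pic^0(X)\cong\Pic^0(\Alb(X))$ of Proposition \ref{pp:Alb} \ref{it:AlbPic0} together with the standard construction for a torus and its dual, and pulls it back along $\alpha_{x_0}\times\Id_{\Pic^0(X)}$. Your uniqueness argument via see-saw is essentially the one the paper invokes and is fine, granted a base-change statement for the proper projection $X\times T\to T$. If you want to salvage the existence part in your spirit, either repair it through $H^2$ and the regularity of $X\times T$ as above, or switch to the paper's pullback argument, which reduces everything to the Appell--Humbert description on $\Alb(X)\times\Pic^0(X)$.
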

\begin{proof}
Consider the map \[f=\alpha_{x_0}\times \Id_{\Pic^0(X)}:X\times \Pic^0(X)\to \Alb(X)\times \Pic^0(X).\] By Proposition \ref{pp:Alb} \ref{it:AlbPic0}  and \cite[Lemma, p.328]{griffiths2014principles}, there is a holomorphic line bundle $\cP$ on $\Alb(X)\times \Pic^0(X)$ that is trivial on $\{0\}\times \Pic^0(X)$ such that for every $y\in \Pic^0(X)$, the line bundle $\cP|_{\Alb(X)\times \{y\}}$ is of class $y$ in $\Pic^0(\Alb(X))$. Let $L=f^*\cP$, then $L|_{\{x_0\}\times \Pic^0(X)}=f^*(\cP|_{\{0\}\times \Pic^0(X)})$ is trivial. For every $y\in \Pic^0(X)$, the line bundle \[L|_{X\times \{y\}}=f^*(\cP|_{\Alb(X)\times \{y\}})=\alpha_{x_0}^*(\cP|_{\Alb(X)\times \{y\}})\] is of class $y$ in $\Pic^0(X)$. The existence is proved. The uniqueness follows from \cite[Cor.~A.9]{birkenhake2004complex}.\end{proof}
 Let \[\car(X)=\Hom(H_1(X,\Z),\C^*)\]   be the group of characters of the first homology of $X$. By \cite[Cor.~A.8, A.9]{hatcher2005algebraic}, the abelian group $H_1(X,\Z)$ is finitely generated.
 From \cite[Ch.~12 b.]{milne2017algebraic}, $\car(X)$ has a natural structure of diagonalizable algebraic group over $\C$, with identity component $\car^{\circ}(X)$ isomorphic to $\G_m^{b_1(X)}$. 
Moreover, $\car^u(X):=\Hom(H_1(X,\Z), S^1)$  is a real Lie subgroup of $\car(X)$ of  dimension $b_1(X)$.
 There is a canonical group isomorphism by taking  character sheaves  \begin{equation}\label{eq:Piu=Loc}
	\car^u(X)\to \Loc^{u,1}(X),\quad \chi\mapsto \cL_{\chi}.
\end{equation}Set $T(X):=\Hom(H_1(X,\Z)_{\free},S^1)$. Then $T(X)$ is the identity component of $\car^u(X)$. 
From Corollary \ref{cor:3.3.6}, composing the isomorphism (\ref{eq:Piu=Loc}) and the map (\ref{eq:LocutoPic}) gives a  morphism  of real Lie groups
\begin{equation}\label{eq:chartoPic}
T(X)\to \Pic^0(X).
\end{equation}
In   Corollary \ref{cor:chartoPic} \ref{it:TX=Pic0}, the  isomorphism  allows one to identify certain characters with  topologically trivial  line bundles. This identification is used in the proof of Theorem \ref{thm:main}. When $X$ is Kähler, Corollary \ref{cor:chartoPic} \ref{it:Loc1Pictau} is also  in  the proof of  \cite[Cor.~1.4]{wang2016torsion}. 
\begin{cor}\label{cor:chartoPic}
\hfill\begin{enumerate}
\item \label{it:TX=Pic0} The morphism (\ref{eq:chartoPic}) is 
 an isomorphism of real Lie groups.

\item\label{it:Loc1Pictau} The map (\ref{eq:LocutoPic}) is a group isomorphism and $\NS(X)_{\tor}=H^2(X,\Z)_{\tor}$. In particular, every element of $\Pic^{\tau}(X)$ is a flat unitary  line bundle.
\end{enumerate} \end{cor}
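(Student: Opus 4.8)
The plan is to compare the \emph{topological} exponential sequence of constant sheaves $0\to\Z\to\R\to S^1\to 0$ with the holomorphic exponential sequence $0\to\Z\to O_X\to O_X^*\to 1$ from (\ref{eq:long}), using the compatible sheaf inclusions $\R\hookrightarrow O_X$ and $S^1\hookrightarrow O_X^*$ (as constant functions), and then to plug in the $\R$-linear isomorphism $\phi$ of (\ref{eq:RtoOX}) provided by Corollary \ref{cor:3.3.6}. By (\ref{eq:Piu=Loc}) and the universal coefficient theorem (valid since $H_0(X,\Z)$ is free) I identify $\Loc^{u,1}(X)$ with $\car^u(X)=\Hom(H_1(X,\Z),S^1)=H^1(X,S^1)$, so the long exact cohomology sequence of the first sheaf sequence reads
\[0\to H^1(X,\Z)\to H^1(X,\R)\xrightarrow{\,e\,}\Loc^{u,1}(X)\xrightarrow{\,\partial\,}H^2(X,\Z)\to H^2(X,\R).\]
Naturality of the connecting homomorphisms for the map between the two sheaf sequences then shows that (\ref{eq:LocutoPic}) precomposed with $e$ equals $H^1(X,\R)\xrightarrow{\phi}H^1(X,O_X)\to\Pic(X)$, and that $\delta\circ(\ref{eq:LocutoPic})=\partial$, with $\delta$ as in (\ref{eq:long}).

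From here I extract two short exact sequences. On the topological side, $\im(\partial)=\ker(H^2(X,\Z)\to H^2(X,\R))=H^2(X,\Z)_{\tor}$, while $\ker(\partial)=\im(e)$; a direct computation with the splitting $H_1(X,\Z)=H_1(X,\Z)_{\free}\oplus H_1(X,\Z)_{\tor}$ identifies $\im(e)$ with $T(X)=\Hom(H_1(X,\Z)_{\free},S^1)$, so
\[0\to T(X)\to\Loc^{u,1}(X)\xrightarrow{\,\partial\,}H^2(X,\Z)_{\tor}\to 0.\]
On the holomorphic side, the only non-formal input is that $H^2(X,O_X)$ is a $\C$-vector space, hence torsion free; therefore $H^2(X,\Z)_{\tor}\subseteq\ker(H^2(X,\Z)\to H^2(X,O_X))=\im(\delta)=\NS(X)$, which together with the trivial reverse inclusion yields $\NS(X)_{\tor}=H^2(X,\Z)_{\tor}$ and shows that $\delta$ restricts to a surjection $\Pic^{\tau}(X)\to H^2(X,\Z)_{\tor}$ with kernel $\Pic^0(X)$, giving
\[0\to\Pic^0(X)\to\Pic^{\tau}(X)\xrightarrow{\,\delta\,}H^2(X,\Z)_{\tor}\to 0.\]

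To prove \ref{it:TX=Pic0} I restrict the commutative square of the first paragraph to images: $e$ has image $T(X)$ and the lower horizontal map has image $\Pic^0(X)$, both with kernel the lattice $H^1(X,\Z)$, so the morphism $T(X)\to\Pic^0(X)$ induced by (\ref{eq:LocutoPic}) — which by construction is exactly (\ref{eq:chartoPic}) — is the map induced by $\phi$ on $H^1(X,\R)/H^1(X,\Z)\to H^1(X,O_X)/H^1(X,\Z)$; since $\phi$ is an $\R$-linear isomorphism taking the lattice $H^1(X,\Z)\subset H^1(X,\R)$ onto the lattice $H^1(X,\Z)\subset H^1(X,O_X)$ (as recorded in the proof of Corollary \ref{cor:3.3.6}), this induced map is an isomorphism of real Lie groups. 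For \ref{it:Loc1Pictau} I assemble the two short exact sequences into a commutative ladder whose left vertical is (\ref{eq:chartoPic}) — an isomorphism by \ref{it:TX=Pic0} — whose right vertical is the identity of $H^2(X,\Z)_{\tor}$ (using $\delta\circ(\ref{eq:LocutoPic})=\partial$), and whose middle vertical is (\ref{eq:LocutoPic}); the short five lemma then forces (\ref{eq:LocutoPic}) from $\Loc^{u,1}(X)$ to $\Pic^{\tau}(X)$ to be a group isomorphism, and $\NS(X)_{\tor}=H^2(X,\Z)_{\tor}$ has already been obtained. The closing assertion follows since (\ref{eq:LocutoPic}) is now surjective and, by the remark after Theorem \ref{thm:unitRH}, its image consists precisely of the flat unitary line bundles.

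The argument is essentially formal once Corollary \ref{cor:3.3.6} is available; the step requiring some care is verifying that the cokernel of $\Pic^0(X)\hookrightarrow\Pic^{\tau}(X)$ is \emph{all} of $H^2(X,\Z)_{\tor}$ rather than just $\NS(X)_{\tor}$ — equivalently, that every torsion class in $H^2(X,\Z)$ is the Chern class of a holomorphic line bundle — and this is exactly where the torsion freeness of $H^2(X,O_X)$ enters.
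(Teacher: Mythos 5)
Your proof is correct, and it differs from the paper's in instructive ways. For part \ref{it:TX=Pic0} the paper does not argue on $X$ at all: it identifies $T(X)$ with $\car^u(\Alb(X))=\Hom(\im(\iota),S^1)$, invokes the Appell--Humbert description of line bundles on a complex torus (\cite[Prop.~2.2.2]{birkenhake2004complex}) to get $\Hom(\im(\iota),S^1)\cong\Pic^0(\Alb(X))$, and then transports this along the isomorphism $\alpha_{x_0}^*:\Pic^0(\Alb(X))\to\Pic^0(X)$ of Proposition \ref{pp:Alb} \ref{it:AlbPic0}; your route stays on $X$, comparing the topological and holomorphic exponential sequences and observing that (\ref{eq:chartoPic}) is the map induced by $\phi$ on the quotients of $H^1(X,\R)$ and $H^1(X,O_X)$ by the common lattice $H^1(X,\Z)$ --- this is more self-contained (no factors of automorphy, no Albanese) but leans on the identification of the sheaf-cohomology map $H^1(X,S^1)\to H^1(X,O_X^*)$ with $\cL\mapsto\cL\otimes_{\C}O_X$, an identification the paper also uses silently in its final ladder. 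For part \ref{it:Loc1Pictau} the overall structure (ladder of two short exact sequences plus the five lemma) is the same, but the key non-formal input differs: the paper derives the surjectivity of $\delta:\Pic^{\tau}(X)\to H^2(X,\Z)_{\tor}$ from the surjectivity of the restriction $r:\Loc^{u,1}(X)\to\Hom(H_1(X,\Z)_{\tor},S^1)$ (divisibility of $S^1$) combined with the universal-coefficient isomorphism $\Ext^1_{\Z}(H_1(X,\Z)_{\tor},\Z)\cong H^2(X,\Z)_{\tor}$, whereas you get $H^2(X,\Z)_{\tor}\subseteq\NS(X)$ directly from the torsion-freeness of the $\C$-vector space $H^2(X,O_X)$. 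Your argument for this step is shorter and makes transparent that $\NS(X)_{\tor}=H^2(X,\Z)_{\tor}$ holds for an arbitrary compact complex manifold, with regularity entering only through $\phi$ being an isomorphism; the paper's version has the mild advantage of producing the explicit isomorphism $\psi$ on the cokernels, but both ladders close up identically via the five lemma.
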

\begin{proof} 
\hfill	\begin{enumerate}
\item	Lemma \ref{lm:Albtor} gives an identification $H_1(X,\Z)_{\free}=\im(\iota)$.
By \cite[Prop.~2.2.2]{birkenhake2004complex}, the natural group morphism \begin{equation}\label{eq:AHthm}\Hom(\im(\iota),S^1)\to \Pic^0(\Alb(X))\end{equation} defined \textit{via} factors of automorphy  (\cite[p.30]{birkenhake2004complex}) is an isomorphism. The map (\ref{eq:chartoPic})   is  the composition of (\ref{eq:AHthm}) with the isomorphism $\alpha_{x_0}^*:\Pic^0(\Alb(X))\to \Pic^0(X)$ in Proposition \ref{pp:Alb} \ref{it:AlbPic0}. 

To sum it up:
\begin{center}
\begin{tikzcd}
	{\car^u(\Alb(X))=\Hom(\im(\iota),S^1)} \arrow[r,"\sim"] \arrow[d,"(\ref{eq:AHthm})"] & T(X) \arrow[r, hook] \arrow[d, "(\ref{eq:chartoPic})"] & {\car^u(X)=\Loc^{u,1}(X)} \arrow[d, "(\ref{eq:LocutoPic})"] \\
	\Pic^0(\Alb(X)) \arrow[r, "\alpha_{x_0}^*", "\sim"']                  & \Pic^0(X) \arrow[r, hook]                                                      & \Pic^{\tau}(X).                                                 
\end{tikzcd}
\end{center}
\item The commutative diagram of abelian sheaves on $X$ \begin{center}
\begin{tikzcd}
	0 \arrow[r] & \Z \arrow[r] \arrow[d,"\Id"] & \R \arrow[r] \arrow[d, hook]             & S^1 \arrow[r] \arrow[d, hook] & 0 \\
	0 \arrow[r] & \Z \arrow[r]           & O_X \arrow[r, "f\mapsto \exp(2\pi if)"] & O_X^* \arrow[r]               & 0
\end{tikzcd}
\end{center} has exact rows. Moreover, the $\Z$-module $\R$ is  injective. Therefore, there is a commutative diagram with exact rows \begin{center}
\begin{tikzcd}[sep=0.7em, font=\small]
	&                                                          & 0 \arrow[r]                                       & {\Hom(H_1(X,\Z)_{\tor},S^1)} \arrow[r, "\sim"] \arrow[rdd, "\psi", dotted, bend left] & {\Ext_{\Z}^1(H_1(X,\Z)_{\tor},\Z)} \arrow[r]                            & 0 \\
	0 \arrow[r] & {\Hom(H_1(X,\Z),\Z)} \arrow[r]                           & {\Hom(H_1(X,\Z),\R)} \arrow[r]                    & {\car^u(X)} \arrow[r] \arrow[u, "r"]                                        & {\Ext^1_{\Z}(H_1(X,\Z),\Z)} \arrow[r] \arrow[d, hook,"\xi"] \arrow[u, "\sim"] & 0 \\
	0 \arrow[r] & {H^1(X,\Z)} \arrow[r] \arrow[d, "\Id"] \arrow[u, "\sim"] & {H^1(X,\R)} \arrow[r] \arrow[d, "(\ref{eq:RtoOX})"] \arrow[u, "\sim"] & {H^1(X,S^1)} \arrow[r] \arrow[d] \arrow[u, "\sim"]                            & {H^2(X,\Z)} \arrow[d, "\Id"]                                            &   \\
	0 \arrow[r] & {H^1(X,\Z)} \arrow[r]                                    & {H^1(X,O_X)} \arrow[r]                            & {H^1(X,O_X^*)} \arrow[r, "\delta"]                                                    & {H^2(X,\Z)}                                                             &  
\end{tikzcd}
\end{center} 
where $r$ is the restriction, the  vertical morphisms in the middle are from  \cite[Thm.~3.2]{hatcher2005algebraic} and $\im(\xi)=H^2(X,\Z)_{\tor}$ by \cite[p.196]{hatcher2005algebraic}. Hence an isomorphism $\psi:\Hom(H_1(X,\Z)_{\tor},S^1)\to H^2(X,\Z)_{\tor}$ fitting into a commutative diagram \begin{center}
	\begin{tikzcd}
		0 \arrow[r] & T(X) \arrow[r] \arrow[d,"(\ref{eq:chartoPic})"] & \Loc^{u,1}(X) \arrow[r, "r"] \arrow[d, "(\ref{eq:LocutoPic})"] & {\Hom(H_1(X,\Z)_{\tor},S^1)} \arrow[r] \arrow[d, dashed,"\psi"] & 0  \\
		0 \arrow[r] & \Pic^0(X) \arrow[r]                              & \Pic^{\tau}(X) \arrow[r,"\delta"]                             & H^2(X,\Z)_{\tor} \arrow[r]                                  & 0,
	\end{tikzcd}
\end{center} 
where the first row is exact. Thus, $\delta$ is surjective and the second row is also exact.  By the  five lemma, the middle vertical map (\ref{eq:LocutoPic})  is an isomorphism.
\end{enumerate}
\end{proof}

	\section{Defect of semismallness}\label{sec:defect}
In this section, we review the defect of semismallness of a morphism, an invariant introduced by de Cataldo and Migliorini that plays a crucial role in the decomposition theorem and Lefschetz's theorem. It appears in Fact \ref{ft:KWvanish} and Theorem \ref{thm:main}. Its main property that we need is Proposition \ref{pp:semismall}. 
\subsection{Stratifications and constructible sheaves}
 We refer to \cite[Sec.~2.1]{bingener1984constructible} for  the definitions of \textbf{constructible stratifications} and  \textbf{Whitney stratifications} of a complex analytic space. 
 
 Theorem \ref{thm:semifib} is about the semicontinuity of fiber dimension. Although it is  well-known,  a short proof is included due to the lack of reference.  Its analogue in algebraic geometry is a celebrated theorem of Chevalley \cite[Cor.~13.1.5]{EGAIV3}. 
 \begin{thm}[Analytic Chevalley theorem]\label{thm:semifib}
 	Let	$f:X\to Y$ be a proper morphism of reduced complex analytic spaces.  For every integer $n\ge0$, let $Y_n=\{y\in Y:\dim f^{-1}(y)= n\}$ and $Y_{\ge n}=\cup_{m\ge n}Y_m$. Then
 	$Y_{\ge n}$ is an analytic subset of $Y$. In particular, $\{Y_n\}_{n\in \N}$ is a constructible stratification of $Y$. 
 \end{thm}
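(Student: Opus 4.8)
The plan is to reduce the assertion to a semicontinuity statement on the source $X$, and then to push forward via Remmert's proper mapping theorem. The substantive classical input I would invoke is the \emph{semicontinuity of fiber dimension}: for an arbitrary holomorphic map $g\colon Z\to W$ of complex analytic spaces and any integer $n\ge 0$, the locus
\[
Z_{\ge n}(g):=\{z\in Z:\dim_z g^{-1}(g(z))\ge n\}
\]
is a (closed) analytic subset of $Z$; this follows from the local structure of analytic maps (Remmert's rank theorem / the local parametrization theorem), and is available in the standard references on complex analytic geometry. Applying this to $f$ itself, $X_{\ge n}:=\{x\in X:\dim_x f^{-1}(f(x))\ge n\}$ is an analytic subset of $X$, hence closed, so the restriction $f|_{X_{\ge n}}\colon X_{\ge n}\to Y$ is again proper.

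Next I would invoke Remmert's proper mapping theorem: the image of a closed analytic subset under a proper holomorphic map is an analytic subset. Hence $f(X_{\ge n})$ is an analytic subset of $Y$. It then remains to identify this image with $Y_{\ge n}$. For $y\in Y$ with $f^{-1}(y)\neq\varnothing$, the dimension of the analytic set $f^{-1}(y)$ equals $\max_{x\in f^{-1}(y)}\dim_x f^{-1}(y)$, and for $x$ with $f(x)=y$ one has $\dim_x f^{-1}(y)=\dim_x f^{-1}(f(x))$; therefore $\dim f^{-1}(y)\ge n$ holds precisely when $f^{-1}(y)$ meets $X_{\ge n}$, i.e. precisely when $y\in f(X_{\ge n})$ (and for $n=0$ both sides equal $f(X)$). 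Thus $Y_{\ge n}=f(X_{\ge n})$ is analytic in $Y$.

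Finally I would deduce the stratification statement. Since $Y_{\ge n}\supseteq Y_{\ge n+1}$ with both analytic, $Y_n=Y_{\ge n}\setminus Y_{\ge n+1}$ is open in the analytic set $Y_{\ge n}$, hence a locally closed analytic subset of $Y$, and the $Y_n$ clearly partition $Y$. For local finiteness, given $y\in Y$ choose a relatively compact open neighborhood $V\ni y$; by properness $f^{-1}(\overline V)$ is a compact analytic space, hence of finite dimension $d$, so $Y_n\cap V=\varnothing$ for $n>d$. Thus $\{Y_n\}_{n\in\N}$ is a locally finite partition of $Y$ into locally closed analytic subsets, i.e. a constructible stratification. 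The one genuinely non-formal step is the first one — semicontinuity of fiber dimension on the source — and if one wished to keep the argument self-contained one would reprove it by working locally, factoring $f$ (after a local embedding) through a finite map followed by a linear projection and estimating dimensions of the resulting Weierstrass data; everything after that is bookkeeping.
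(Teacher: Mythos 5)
Your proposal is correct and follows essentially the same route as the paper: the set you call $X_{\ge n}$ is exactly the paper's $F_n$, whose analyticity is the semicontinuity of local fiber dimension (the paper cites Fischer, Thm.~3.6), and the conclusion likewise comes from $Y_{\ge n}=f(F_n)$ together with Remmert's proper mapping theorem. The only difference is that you spell out the identification $Y_{\ge n}=f(X_{\ge n})$ and the local finiteness of the strata in more detail than the paper does, which is fine.
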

 \begin{proof}
 	Let $F_n:=\{x\in X:\dim_xf^{-1}(f(x))\ge n\}$. By \cite[Thm.~3.6, p.137]{fischer2006complex}, $F_n$ is an analytic subset of $X$. By the definition of global dimension \cite[p.94]{grauert2012coherent}, one has $Y_{\ge n}=f(F_n)$. By Remmert theorem (see, \textit{e.g.}, \cite[Thm.~4A, p.150]{whitney1972complex}), the subset $Y_{\ge n}$ is  analytic in $Y$.
 \end{proof}
 
\begin{df}(\cite[p.125]{bingener1984constructible})\label{df:relstr}
Let $f:X\to Y$ be a morphism of complex analytic spaces.  If two Whitney stratifications  $\mathfrak{X}:X=\sqcup_{\alpha}X_{\alpha}$ and $\mathfrak{Y}:Y=\sqcup_{\lambda}Y_{\lambda}$ satisfy that: 
\begin{enumerate}
	\item For each $\alpha$, there is $\lambda$ with $f(X_{\alpha})\subset Y_{\lambda}$;
	\item For each pair $(\alpha,\lambda)$ with $f(X_{\alpha})\subset Y_{\lambda}$, the restricted morphism $f:X_{\alpha}\to Y_{\lambda}$ is smooth.
\end{enumerate}Then  such a pair $(\mathfrak{X},\mathfrak{Y})$ is called a Whitney stratification of $f$.  \end{df}

\begin{ft}[{\cite[Thm.~1]{hironaka1977stratification}, \cite[Lem.~2.4]{bingener1984constructible}, \cite[Thm, p.43]{goresky1988stratified}}]\label{ft:relWhitney}
	Let $f:X\to Y$ be a \emph{proper} morphism of complex analytic spaces. Suppose that $\mathfrak{X}$, (resp. $\mathfrak{Y}$) is a constructible stratification of $X$ (resp. $Y$), then there exists a Whitney stratification $(\mathfrak{X}',\mathfrak{Y}')$ of $f$ such that $\mathfrak{X}'$ (resp. $\mathfrak{Y}'$) refines $\mathfrak{X}$ (resp. $\mathfrak{Y}$).
\end{ft}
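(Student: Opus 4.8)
The plan is to produce $(\mathfrak{X}',\mathfrak{Y}')$ by a finite sequence of refinements, using three inputs: the \emph{absolute} existence theorem for Whitney stratifications (any constructible stratification of a complex analytic space, and more generally any locally finite family of locally closed analytic subsets, can be refined by a Whitney stratification; cf.\ \cite{whitney1972complex,hironaka1977stratification}); the analytic Chevalley theorem (Theorem~\ref{thm:semifib}), which guarantees that the loci where the fibre dimension of $f$ --- equivalently the rank of $df$ restricted to a stratum --- jumps are analytic; and the constant-rank theorem together with Remmert's proper mapping theorem.

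First I would refine $\mathfrak{Y}$: the absolute theorem gives a Whitney stratification $\mathfrak{Y}_0$ of $Y$ refining $\mathfrak{Y}$. Next, since $f$ is analytic and each stratum $Y_\lambda$ of $\mathfrak{Y}_0$ is locally closed analytic, the preimages $\{f^{-1}(Y_\lambda)\}_\lambda$ form a constructible stratification of $X$; applying the absolute theorem to the common refinement of $\mathfrak{X}$ with this family yields a Whitney stratification $\mathfrak{X}_0$ of $X$ refining $\mathfrak{X}$ in which every stratum maps into a single stratum of $\mathfrak{Y}_0$. This secures condition~(1) of Definition~\ref{df:relstr}; only the submersivity condition~(2) remains.

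For condition~(2) I would argue by induction on $\dim X$ (keeping $f$ proper and $\dim Y$ arbitrary). Let $X_\alpha$ be a top-dimensional stratum of $\mathfrak{X}_0$, with $f(X_\alpha)\subset Y_\lambda$; both are complex manifolds, so the rank of $d(f|_{X_\alpha})$ is upper semicontinuous with maximal value $r$ and drop-locus a proper analytic subset $Z_\alpha\subsetneq X_\alpha$ (Theorem~\ref{thm:semifib}). If $r=\dim Y_\lambda$, then on $X_\alpha\setminus\overline{Z_\alpha}$ the map has constant rank $\dim Y_\lambda$, hence is a submersion onto the smooth stratum $Y_\lambda$ by the constant-rank theorem; here one only discards the strictly lower-dimensional set $\overline{Z_\alpha}$. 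If $r<\dim Y_\lambda$, then by Remmert's proper mapping theorem $f(\overline{X_\alpha})\cap Y_\lambda$ is an analytic subset of $Y_\lambda$ of dimension $\le r<\dim Y_\lambda$; I refine $\mathfrak{Y}_0$ by splitting it off and re-run the preimage construction on $X$, again removing only a strictly lower-dimensional piece from the top stratum. Doing this for every top-dimensional stratum (locally finitely many, by properness), together with all strata of $\mathfrak{X}_0$ of dimension $<\dim X$, produces a closed analytic subset $X'\subsetneq X$ with $\dim X'<\dim X$ over whose complement the remaining open pieces of top strata already map submersively to $\mathfrak{Y}_0$; since $f|_{X'}\colon X'\to Y$ is again proper, the inductive hypothesis applied to the constructible stratifications of $X'$ and $Y$ induced by $\mathfrak{X}_0$ and the refined $\mathfrak{Y}_0$ finishes the construction, and gluing yields $(\mathfrak{X}',\mathfrak{Y}')$.

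The main obstacle is the interleaving in the last step: each refinement of $X$ (or of $Y$, then pulled back to $X$) can destroy the frontier condition and the Whitney conditions $(a)$, $(b)$, and can reintroduce non-submersive strata, so one must alternate the absolute refinement theorem with the rank argument, check that the glued stratification across the boundary of $X'$ is still Whitney, and verify that the whole loop terminates --- essentially because the ``bad'' loci drop in dimension at each turn. Carrying this out rigorously is exactly the content of the Thom--Mather stratification theory behind \cite{hironaka1977stratification} (see also \cite{bingener1984constructible,goresky1988stratified}), which is why in the body of the paper the statement is quoted rather than reproved.
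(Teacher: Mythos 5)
The paper does not prove this statement: it is imported as a \emph{Fact} with references to Hironaka, Bingener--Flenner and Goresky--MacPherson and used as a black box, so there is no in-paper argument to compare yours against. Judged on its own terms, your first two steps are sound and standard: refining $\mathfrak{Y}$ to a Whitney stratification $\mathfrak{Y}_0$, pulling its strata back along $f$, and taking a Whitney refinement of the common refinement with $\mathfrak{X}$ does secure condition (1) of Definition~\ref{df:relstr} using only the absolute existence theorem for Whitney stratifications.

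The gap is in the submersivity step, and that step is essentially the whole content of the theorem. Two concrete problems. First, your induction is on $\dim X$, but the operation you perform when the generic rank $r$ of $f|_{X_\alpha}$ is less than $\dim Y_\lambda$ is a refinement of the stratification of $Y$; this forces a re-partition of \emph{all} of $X$ by preimages, not just of $X_\alpha$. A stratum $X_\beta$ elsewhere may meet $f^{-1}\bigl(f(\overline{X_\alpha})\cap Y_\lambda\bigr)$ in a set of full dimension (for instance if $f$ collapses $X_\beta$ to a point of that image), so it is not true that each turn of the loop only removes a strictly lower-dimensional piece from the top stratum, and termination is not established as stated. The standard arguments run the induction on the dimension of the target strata (equivalently on $\dim Y$), which is where the genuinely decreasing quantity lives. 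Second, after discarding $\overline{Z_\alpha}$ and after each re-partition you assert, but do not verify, that the resulting locally closed pieces can be re-stratified so as to satisfy Whitney (a), (b) and the frontier condition \emph{without} destroying the submersivity already achieved on other strata; this compatibility of the two requirements under alternating refinements, together with the gluing over the boundary of your set $X'$, is precisely the non-formal part of the cited theorems. You flag both issues yourself in the final paragraph, which is honest, but it means the proposal is an outline of the known strategy rather than a proof; for the purposes of this paper, quoting the references, as the author does, is the right call.
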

Corollary \ref{cor:common} is useful but implicit in the literature.
\begin{cor}\label{cor:common}
Let $X$ be a complex analytic space. For finitely many constructible stratifications of $X$, there exists a Whitney stratification of $X$ refining all of them.
\end{cor}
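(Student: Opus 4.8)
The plan is to reduce everything to a single application of Fact~\ref{ft:relWhitney} for the identity morphism, followed by a trivial induction on the number of stratifications.

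\textbf{Step 1 (handling two stratifications at once).} Let $\mathfrak{A}$ and $\mathfrak{B}$ be two constructible stratifications of $X$. The identity map $\mathrm{id}_X\colon X\to X$ is a proper morphism of complex analytic spaces, so Fact~\ref{ft:relWhitney} applied to it, with $\mathfrak{X}=\mathfrak{A}$ on the source and $\mathfrak{Y}=\mathfrak{B}$ on the target, yields a Whitney stratification $(\mathfrak{X}',\mathfrak{Y}')$ of $\mathrm{id}_X$ with $\mathfrak{X}'$ refining $\mathfrak{A}$ and $\mathfrak{Y}'$ refining $\mathfrak{B}$. The key point is that condition~(1) of Definition~\ref{df:relstr}, read for $f=\mathrm{id}_X$, says precisely that every stratum of $\mathfrak{X}'$ is contained in some stratum of $\mathfrak{Y}'$; since both are partitions of $X$, this means $\mathfrak{X}'$ refines $\mathfrak{Y}'$, hence also $\mathfrak{B}$. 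By Definition~\ref{df:relstr} the first component $\mathfrak{X}'$ is itself a Whitney stratification of $X$, and we have just shown it refines both $\mathfrak{A}$ and $\mathfrak{B}$.

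\textbf{Step 2 (induction on the number $k$ of stratifications).} For $k=1$ apply Step~1 with $\mathfrak{A}=\mathfrak{B}=\mathfrak{X}_1$ (the case $k=0$ is merely the existence of a Whitney stratification of $X$, obtained in the same way from any constructible stratification). For the inductive step with data $\mathfrak{X}_1,\dots,\mathfrak{X}_k$, the inductive hypothesis produces a Whitney stratification $\mathfrak{W}$ refining $\mathfrak{X}_1,\dots,\mathfrak{X}_{k-1}$; a Whitney stratification is in particular constructible, so Step~1 applied to $\mathfrak{W}$ and $\mathfrak{X}_k$ gives a Whitney stratification refining both, hence refining all of $\mathfrak{X}_1,\dots,\mathfrak{X}_k$ by transitivity of refinement.

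I expect no genuine obstacle: once one notices that feeding $\mathrm{id}_X$ into the relative Whitney stratification theorem turns ``refine a constructible stratification of the source together with one of the target'' into ``refine two constructible stratifications of $X$'', the remainder is bookkeeping. The only points that deserve a line of justification are that the source component of a Whitney stratification of a morphism is an honest Whitney stratification of $X$ (this is built into Definition~\ref{df:relstr}) and, if one wants to be scrupulous, that $\mathrm{id}_X$ is proper and all the constructions involved are insensitive to nilpotents, so one may freely replace $X$ by $X_{\mathrm{red}}$.
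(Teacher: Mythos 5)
Your proof is correct and follows essentially the same route as the paper: both apply Fact~\ref{ft:relWhitney} to $\mathrm{Id}_X$ with the two given constructible stratifications placed on source and target, and both use condition~(1) of Definition~\ref{df:relstr} to see that the resulting source stratification refines the target one, hence refines both inputs. The reduction to the two-stratification case by induction is the same bookkeeping the paper leaves implicit in ``it suffices to consider the case of two.''
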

\begin{proof}
It suffices to  consider the case of two constructible stratifications $\mathfrak{X}_1$ and $\mathfrak{X}_2$ of $X$. By Fact \ref{ft:relWhitney}, there is a  Whitney stratification $(\mathfrak{X},\mathfrak{X}')$ of $\Id_X$ such that $\mathfrak{X}$ (resp. $\mathfrak{X}'$) refines $\mathfrak{X}_1$ (resp. $\mathfrak{X}_2$). Moreover, $\mathfrak{X}$ refines $\mathfrak{X}'$ by Definition \ref{df:relstr}.  Hence a Whitney stratification $\mathfrak{X}$ refining both $\mathfrak{X}_1$ and $\mathfrak{X}_2$.
\end{proof}
For a complex analytic space $X$, using analytic constructible stratifications, one can define constructible sheaves.  Let $D_c^b(X)$ be the triangulated category of complexes of  sheaves of $\C$-vector spaces whose  cohomology is bounded and constructible (see, \textit{e.g.}, \cite[p.82]{dimca2004sheaves}).  
\begin{ft}[{\cite[Prop.~8.5.7  (b)]{kashiwara2013sheaves}, \cite[Thm.~4.1.5 (b)]{dimca2004sheaves}}]\label{ft:amplitude}
	Let $f:X\to Y$ be a   morphism of complex analytic spaces and $\cK\in D_c^b(X)$. If  $f$ is proper on $\mathrm{Supp}(\cK)$, then $Rf_*\cK\in D_c^b(Y)$.
\end{ft}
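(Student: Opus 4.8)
The plan is to reduce to a \emph{proper} morphism, bound the cohomological amplitude of $Rf_*$ by the fibre dimension, and then propagate constructibility along a Whitney stratification of $f$. First, since $\cK$ is supported on the closed analytic subset $Z:=\mathrm{Supp}(\cK)$, writing $\iota:Z\hookrightarrow X$ for the inclusion one has $\cK\cong\iota_*\iota^{-1}\cK$ with $\iota^{-1}\cK\in D_c^b(Z)$ and $Rf_*\cK\cong R(f\circ\iota)_*(\iota^{-1}\cK)$; by hypothesis $f\circ\iota:Z\to Y$ is proper, so after replacing $(X,\cK)$ by $(Z,\iota^{-1}\cK)$ we may assume $f$ proper. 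For boundedness, pick $N$ with $\mathcal{H}^j(\cK)=0$ for $|j|>N$ (possible as $\cK$ is bounded). Since $f$ is proper, Theorem \ref{thm:semifib} shows the fibre dimension $d:=\sup_{y\in Y}\dim f^{-1}(y)$ is finite, and proper base change identifies $(R^if_*\mathcal{F})_y$ with $H^i(f^{-1}(y),\mathcal{F}|_{f^{-1}(y)})$ for a sheaf $\mathcal{F}$ of $\C$-vector spaces, which vanishes for $i>2d$ because $f^{-1}(y)$ is compact of covering dimension $\le 2d$. Thus $Rf_*$ has cohomological amplitude $\le 2d$, and the spectral sequence $E_2^{p,q}=R^pf_*\mathcal{H}^q(\cK)\Rightarrow\mathcal{H}^{p+q}(Rf_*\cK)$ forces $\mathcal{H}^i(Rf_*\cK)=0$ for $i\notin[-N,N+2d]$, so $Rf_*\cK$ is bounded.

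The heart of the argument is constructibility. Since $\cK\in D_c^b(X)$, there is a constructible stratification of $X$ along which every $\mathcal{H}^j(\cK)$ is locally constant of finite rank; by Corollary \ref{cor:common} we may assume it is a Whitney stratification, and by Fact \ref{ft:relWhitney} refine it to a Whitney stratification $(\mathfrak{X},\mathfrak{Y})$ of $f$, for which $\cK$ is still $\mathfrak{X}$-constructible. Fix a stratum $Y_\lambda$ of $\mathfrak{Y}$. The proper map $f^{-1}(Y_\lambda)\to Y_\lambda$ restricts to a submersion on each stratum $X_\alpha$ with $f(X_\alpha)\subset Y_\lambda$, so by Thom's first isotopy lemma it is, locally over $Y_\lambda$, a stratum-preserving trivialisation $f^{-1}(U)\cong U\times f^{-1}(y)$ over $U$; being stratum-preserving it carries $\cK|_{f^{-1}(U)}$ to the pullback of $\cK|_{f^{-1}(y)}$. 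Proper base change then shows $\mathcal{H}^i(Rf_*\cK)|_U$ is constant with stalk $H^i(f^{-1}(y),\cK|_{f^{-1}(y)})$, which is finite-dimensional because $f^{-1}(y)$ is compact and $\cK|_{f^{-1}(y)}$ is constructible. Hence every $\mathcal{H}^i(Rf_*\cK)$ is locally constant of finite rank on each stratum of $\mathfrak{Y}$; together with the boundedness above, $Rf_*\cK\in D_c^b(Y)$.

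The delicate step is the local topological triviality of $f$ over each base stratum, which rests on Thom's first isotopy lemma for proper Whitney-stratified maps; one must check that the trivialising homeomorphisms respect $\mathfrak{X}$ (so they preserve $\cK$), after which proper base change yields local constancy of the pushforward. By contrast, the boundedness part is routine once Theorem \ref{thm:semifib} is in hand.
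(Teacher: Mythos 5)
The paper offers no proof of this statement: it is imported verbatim as a Fact from \cite[Prop.~8.5.7(b)]{kashiwara2013sheaves} and \cite[Thm.~4.1.5(b)]{dimca2004sheaves}, so there is no internal argument to compare against. Your proof is, in outline, the standard one found in those references: reduce to a proper morphism by restricting to $\Supp(\cK)$, bound the cohomological amplitude of $Rf_*$ by the fibre dimension via proper base change and the cohomological-dimension bound (Lemma~\ref{lm:cdofana}), and obtain constructibility of the sheaves $\cH^i(Rf_*\cK)$ from local topological triviality of $f$ over the strata of a Whitney stratification of $f$ (Fact~\ref{ft:relWhitney}, Thom's first isotopy lemma). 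The overall structure is correct and this is the right level of generality.

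Two steps need repair. First, the finiteness of $d=\sup_{y}\dim f^{-1}(y)$ does not follow from Theorem~\ref{thm:semifib}, which only asserts that each $Y_{\ge n}$ is analytic (a priori all of them could be nonempty); the correct justification is simply $\dim f^{-1}(y)\le\dim X<\infty$. Second, and more substantively, the claim that the stratum-preserving trivialisation $f^{-1}(U)\cong U\times f^{-1}(y)$ ``carries $\cK|_{f^{-1}(U)}$ to the pullback of $\cK|_{f^{-1}(y)}$'' is not literally true: the restrictions of the $\cH^j(\cK)$ to the strata are local systems, and a complex constructible with respect to the product stratification $\{U\times F_\beta\}$ need not be isomorphic to a pullback from the fibre (even for $U$ contractible, where each cohomology sheaf is a pullback, the complex itself need not be). What the argument actually requires is the homotopy-invariance statement: for $U$ contractible and $\cK'$ constructible with respect to a stratification by sets of the form $U\times F_\beta$, the restriction map $H^i(U\times F,\cK')\to H^i(F,\cK'|_{\{y\}\times F})$ is an isomorphism (a Vietoris--Begle-type result for constructible complexes, see \cite[Prop.~2.7.5 and \S 8.4]{kashiwara2013sheaves}). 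Substituting that lemma for the pullback claim closes the gap; the remaining ingredients (reduction to the support, proper base change, the hypercohomology spectral sequence, finite-dimensionality of the stalks over a compact fibre) are fine.
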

\begin{cor}\label{cor:allloc}
Let $f:X\to Y$ be a \emph{proper} morphism of complex analytic spaces and $\cK\in D_c^b(X)$. Then there exists a Whitney stratification $(\mathfrak{X},\mathfrak{Y})$ of $f$ such that for every integer $i$ and every stratum $S$ of $\mathfrak{Y}$, the restriction $\cH^i(Rf_*\cK)|_S$ is a local system on $S$. 
\end{cor}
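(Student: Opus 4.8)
The plan is to assemble the cited structure theorems, with no genuinely new input. First I would invoke Fact~\ref{ft:amplitude}: since $f$ is proper, hence proper on $\Supp(\cK)$, we get $Rf_*\cK\in D_c^b(Y)$. In particular $Rf_*\cK$ has bounded cohomology, so only finitely many sheaves $\cH^i(Rf_*\cK)$ are nonzero, and each of them is constructible on $Y$. By the definition of constructibility, each $\cH^i(Rf_*\cK)$ admits a constructible stratification of $Y$ on whose strata it is locally constant of finite rank; applying Corollary~\ref{cor:common} to this finite collection of stratifications yields a single constructible stratification $\mathfrak{Y}_0$ of $Y$ such that $\cH^i(Rf_*\cK)|_T$ is a local system for every integer $i$ and every stratum $T$ of $\mathfrak{Y}_0$. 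Likewise, since $\cK\in D_c^b(X)$, the same argument on $X$ produces a constructible stratification $\mathfrak{X}_0$ of $X$ adapted to all the $\cH^j(\cK)$.

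Next I would feed $(\mathfrak{X}_0,\mathfrak{Y}_0)$ into Fact~\ref{ft:relWhitney}, using properness of $f$: this gives a Whitney stratification $(\mathfrak{X},\mathfrak{Y})$ of $f$ in the sense of Definition~\ref{df:relstr}, with $\mathfrak{X}$ refining $\mathfrak{X}_0$ and $\mathfrak{Y}$ refining $\mathfrak{Y}_0$. It then remains to verify the local-system conclusion. Fix an integer $i$ and a stratum $S$ of $\mathfrak{Y}$. Since $\mathfrak{Y}$ refines $\mathfrak{Y}_0$, there is a stratum $T$ of $\mathfrak{Y}_0$ with $S\subset T$, and on $T$ the sheaf $\cH^i(Rf_*\cK)$ is a local system; restricting a local system along the inclusion of the locally closed analytic subset $S\hookrightarrow T$ again yields a local system on $S$. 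Hence $\cH^i(Rf_*\cK)|_S$ is a local system, as required.

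I expect no serious obstacle here — the statement is essentially a packaging of Facts~\ref{ft:amplitude} and~\ref{ft:relWhitney} together with Corollary~\ref{cor:common}. The only points that need a moment's care are: (i) using boundedness of $Rf_*\cK$ to reduce to finitely many cohomology sheaves, so that a \emph{common} adapted stratification of $Y$ exists; and (ii) observing that passing to the finer stratification $\mathfrak{Y}$ does not destroy local constancy, since the restriction of a local system to a stratum contained in a stratum of $\mathfrak{Y}_0$ is again a local system. Including the stratification $\mathfrak{X}_0$ of $X$ adapted to $\cK$ is not strictly needed for the displayed conclusion, but it costs nothing and makes $(\mathfrak{X},\mathfrak{Y})$ simultaneously adapted to $\cK$, which is the form in which the corollary will be applied later.
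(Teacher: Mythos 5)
Your proposal is correct and follows essentially the same route as the paper: apply Fact~\ref{ft:amplitude} to get $Rf_*\cK\in D_c^b(Y)$ with only finitely many nonzero constructible cohomology sheaves, take a common refinement of their adapted stratifications via Corollary~\ref{cor:common}, and then upgrade to a Whitney stratification of $f$ via Fact~\ref{ft:relWhitney}. The extra stratification $\mathfrak{X}_0$ of $X$ and the explicit check that refinement preserves local constancy are harmless additions the paper leaves implicit.
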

\begin{proof}
By Fact \ref{ft:amplitude}, $Rf_*\cK\in D_c^b(Y)$. In particular, there are only finitely many $j\in \Z$ with $\cH^j(Rf_*\cK)\neq0$. For  each such $j$, there is an admissible partition (in the sense of \cite[p.81]{dimca2004sheaves}) $\cP_j$ on $Y$ such that the restriction of $\cH^j(Rf_*\cK)$ to each stratum of $\cP_j$ is a local system. By Corollary \ref{cor:common}, there exists a Whitney stratification  $\mathfrak{Y}^0$ of $Y$ refining the finitely many $\cP_j$. By Fact \ref{ft:relWhitney}, there is  a Whitney stratification $(\mathfrak{X},\mathfrak{Y})$ of $f$ satisfying the properties.
\end{proof}
\subsection{Equivalent definitions}\label{sec:equivdf}The defect of semismallness  measures how far a morphism of complex manifolds is from being semismall (see, \textit{e.g.}, \cite[Def.~7.3, p.156]{kiehl2001weil}).  However, in the literature there exist multiple seemingly different definitions. We review some of them and show that they are equivalent.
\begin{df}\label{df:defect}
	Let	$f:X\to Y$ be a proper morphism of complex manifolds with $\dim X=n$. \begin{itemize}
		\item(\cite[Definition 1.1]{esnault1987vanishing}) Define \begin{equation}\label{eq:r1}r_1(f)=\max_Z(\dim Z-\dim f(Z)-\codim_X(Z)),
		\end{equation} where $Z$ runs through all  irreducible analytic subsets of $X$.
		\item(\cite[Definition 9.3.7]{maxim2019intersection}) For a Whitney stratification $(X=\sqcup S_{\alpha},Y=\sqcup T_{\lambda})$ of $f$, we choose a point $y_{\lambda}\in T_{\lambda}$ in each stratum, and define \begin{equation}\label{eq:defectlam}
			r_2(f)=\max_{\lambda}\{2\dim f^{-1}(y_{\lambda})+\dim T_{\lambda}-n\}.
		\end{equation} (By convention, the empty space has dimension $-\infty$.)
		\item(\cite[Definition 4.7.2]{de2005hodge}) For each integer $i\ge0$, let $Y_i=\{y\in Y:\dim f^{-1}(y)=i\}$.  Define \[r_3(f)=\max_{i\ge0}(2i+\dim Y_i-n).\] 
		\item(\cite[Definition 2.8]{popa2013generic}) For each integer $i\ge0$, let  $Y_{\ge i}=\{y\in Y:\dim f^{-1}(y)\ge i\}$ for each $i\ge0$.  Define \[r_4(f)=\max_{i\ge0}(2i+\dim Y_{\ge i}-n).\]
		\item(\cite[Sec. 3.3.2, part 2]{de2009decomposition}) Define \begin{equation}\label{eq:defectdim}
			r_5(f)=\dim X\times_YX-n.
		\end{equation}
	\item(\cite[Sec 3.2]{williamson2016hodge}) Define \[r_6(f)=\max\{i\in \Z:{}^p\cH^i(Rf_*\C_X[n])\neq0\}.\]
	\end{itemize}
\end{df}
\begin{pp} The first five numbers in Definition \ref{df:defect} are all equal. 
\end{pp}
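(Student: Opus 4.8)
The plan is to prove the chain of equalities $r_1(f)=r_2(f)=r_3(f)=r_4(f)=r_5(f)$ by establishing a few pairwise identifications and chaining them together. The easiest are the ones involving only fiber dimensions and the base. First I would show $r_3(f)=r_4(f)$: since $Y_{\ge i}=\bigcup_{j\ge i}Y_j$ and these are analytic subsets by the Analytic Chevalley theorem (Theorem \ref{thm:semifib}), one has $\dim Y_{\ge i}=\max_{j\ge i}\dim Y_j$; substituting into the definition of $r_4$ and reindexing shows the two maxima agree. Next I would identify $r_3(f)$ with $r_2(f)$: fix a Whitney stratification $(\mathfrak{X},\mathfrak{Y})$ of $f$ (which exists by Fact \ref{ft:relWhitney}). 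Over each stratum $T_\lambda$ the fiber dimension of $f$ is constant (by Definition \ref{df:relstr} the restriction of $f$ to strata is smooth, or one refines using Theorem \ref{thm:semifib}), so $T_\lambda\subset Y_{i(\lambda)}$ for a well-defined $i(\lambda)=\dim f^{-1}(y_\lambda)$; conversely each $Y_i$ is a finite union of strata. Comparing the two maxima term by term — using that $\dim Y_i=\max\{\dim T_\lambda: T_\lambda\subset Y_i\}$ — yields $r_2(f)=r_3(f)$.

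The step linking these to $r_5(f)=\dim(X\times_Y X)-n$ is the geometric heart. I would stratify the fiber product $X\times_Y X$ over $Y$: over a point $y$ the fiber of $X\times_Y X\to Y$ is $f^{-1}(y)\times f^{-1}(y)$, of dimension $2\dim f^{-1}(y)$. Applying the Analytic Chevalley theorem (Theorem \ref{thm:semifib}) to the proper map $X\times_Y X\to Y$, or directly decomposing $X\times_Y X=\bigsqcup_i (f\times f)^{-1}(Y_i)\times_{Y_i}$ (the preimage of the locally closed $Y_i$), one gets that each piece lying over $Y_i$ has dimension $2i+\dim Y_i$. Taking the maximum over $i$ gives $\dim(X\times_Y X)=\max_i(2i+\dim Y_i)$, hence $r_5(f)=r_3(f)$. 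Care is needed here because $X\times_Y X$ may be non-reduced and reducible, so I would phrase the dimension count using the local dimension at points and the fact that every irreducible component dominates some $Y_i$-stratum; the proper map assumption guarantees images are analytic so the stratification is genuinely locally finite.

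Finally, the comparison with $r_1(f)$. Given an irreducible analytic subset $Z\subset X$, restrict $f$ to $Z$; the generic fiber of $Z\to f(Z)$ has dimension $\dim Z-\dim f(Z)$, and this generic fiber sits inside some $f^{-1}(y)$ with $y$ in a dense subset of $f(Z)$, so $\dim Z-\dim f(Z)\le \dim f^{-1}(y)$. Meanwhile $\codim_X Z=n-\dim Z$. Thus $\dim Z-\dim f(Z)-\codim_X Z\le 2\dim Z-\dim f(Z)-n\le 2\dim f^{-1}(y)+\dim f(Z)-n$, and since $f(Z)$ is an irreducible analytic subset of some $\overline{Y_i}$, one bounds this by $r_3(f)$, giving $r_1(f)\le r_3(f)$. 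For the reverse inequality I would, given the index $i$ achieving the maximum in $r_3$, pick an irreducible component $W$ of $\overline{Y_i}$ of dimension $\dim Y_i$ and an irreducible component $Z$ of $f^{-1}(W)$ that dominates $W$ and meets $Y_i$: then along a generic point the fiber of $Z\to W$ has dimension exactly $i$ (using upper semicontinuity, Theorem \ref{thm:semifib}, to ensure the generic fiber is not larger, after possibly shrinking), so $\dim Z=\dim Y_i+i$, $\codim_X Z=n-\dim Y_i-i$, and $\dim Z-\dim f(Z)-\codim_X Z=i-(n-\dim Y_i-i)=2i+\dim Y_i-n=r_3(f)$. Hence $r_1(f)\ge r_3(f)$, completing the cycle.

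The main obstacle I expect is the $r_1$–$r_3$ comparison, specifically producing, for the reverse inequality, an irreducible analytic $Z\subset X$ whose generic fiber over its image has dimension exactly equal to the maximizing $i$ rather than merely at least $i$; this requires a careful use of the semicontinuity of fiber dimension (Theorem \ref{thm:semifib}) together with a dimension count on irreducible components of $f^{-1}(\overline{Y_i})$, and one must be attentive to the possibility that naive choices overshoot the fiber dimension. The $r_5$ identification is also delicate because of potential non-reducedness of $X\times_Y X$, but there the dimension-theoretic statement is insensitive to the scheme structure, so reducing to the underlying reduced space at the outset suffices.
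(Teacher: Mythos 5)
Your proposal is correct in substance, but it routes the equalities through $r_3$ rather than through $r_2$, which is the hub the paper uses. Concretely: the paper proves $r_2=r_5$ directly by invoking Thom's first isotopy lemma to get that $f$ is a topologically locally trivial fibration over each Whitney stratum $T_\lambda$, so that $\dim f^{-1}(T_\lambda)\times_{T_\lambda}f^{-1}(T_\lambda)=\dim T_\lambda+2\dim f^{-1}(y_\lambda)$, and then compares $r_1$ and $r_3$ to $r_2$ by refining stratifications; you instead compute $\dim(X\times_Y X)$ by decomposing over the Chevalley strata $Y_i$ and compare $r_1$ directly with $r_3$. Your route buys a proof of $r_3=r_5$ that needs only the fiber-dimension theorem and Theorem \ref{thm:semifib}, not the isotopy lemma; the paper's route buys a cleaner $r_2\le r_1$ step, since taking the maximal-dimensional component $Z_0$ of $f^{-1}(\overline{T_\lambda})$ only requires the one-sided inequality $\dim Z_0\ge\dim T_\lambda+\dim f^{-1}(y_\lambda)$, whereas your $r_3\le r_1$ step needs to produce a component of $f^{-1}(W)$ dominating $W$ with generic relative fiber dimension \emph{exactly} $i$ --- which does hold (for generic $y\in Y_i\cap W$ outside a locally finite union of strict analytic subsets, $f^{-1}(y)$ is the union of the traces of the dominating components, so one of them must achieve dimension $i$), but is the most delicate point of your argument, as you yourself note.

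Two small cautions. First, in your $r_2=r_3$ step, smoothness of the restrictions $f\colon X_\alpha\to Y_\lambda$ alone does not immediately give constancy of $\dim f^{-1}(y)$ along a stratum (which strata $X_\alpha$ meet a given fiber could a priori vary); you should either invoke Thom's first isotopy lemma as the paper does, or commit to the refinement via Theorem \ref{thm:semifib}. Second, if you only verify $r_2=r_3$ for one refined stratification, you still owe an argument that $r_2$ is independent of the chosen Whitney stratification of $f$ (the paper extracts this from $r_2=r_5$); in your scheme the cleanest fix is to observe that the constancy of fiber dimension over strata, hence the identification $r_2=r_3$, holds for \emph{every} Whitney stratification of the proper map $f$ by the isotopy lemma. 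Neither point is a genuine obstruction, and the overall plan is sound.
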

This common integer is called the \emph{defect of semismallness} of $f$ and denoted by $r(f)$. We  shall show $r(f)=r_6(f)$ in Proposition \ref{pp:semismall} \ref{it:r6}.
\begin{proof}
\hfill\begin{itemize}
\item $r_3(f)=r_4(f)$: As each $Y_i$ is a subset of $Y_{\ge i}$, one has $r_3(f)\le r_4(f)$. There are only finitely many integers $i\ge0$ with $Y_{\ge i}$ nonempty, so the maximum defining $r_4(f)$ is attained at some $i_0(\ge0)$. Then \[2(i_0+1)+\dim Y_{\ge i_0+1}\le 2i_0+\dim Y_{\ge i_0}.\] Since $Y_{\ge i_0}=Y_{\ge i_0+1}\cup Y_{i_0}$, one has $\dim Y_{\ge i_0}=\dim Y_{i_0}$. Then \[r_4(f)=2i_0+\dim Y_{\ge i_0}-n\le r_3(f).\] Therefore, $r_3(f)=r_4(f)$.
\item $r_2(f)=r_5(f)$: By Thom's first isotopy lemma (see, \textit{e.g.}, \cite[Prop.~11.1]{Mather2012Notes}), for every  $\lambda$, the restriction $f|_{f^{-1}(T_{\lambda})}:f^{-1}(T_{\lambda})\to T_{\lambda}$ is a topologically locally trivial fibration. Therefore, $\dim f^{-1}(y_{\lambda})$ is independent of $y_{\lambda}\in T_{\lambda}$ and \begin{equation}\label{eq:Tlam}
		\dim f^{-1}(T_{\lambda})\times_{T_{\lambda}}f^{-1}(T_{\lambda})=\dim T_{\lambda}+2\dim f^{-1}(y_{\lambda}).
	\end{equation}
	As $\{f^{-1}(T_{\lambda})\times_{T_{\lambda}}f^{-1}(T_{\lambda})\}_{\lambda}$ is a locally finite  partition of $X\times_YX$ into 
	locally closed subsets (in the analytic Zariski topology), one has \begin{equation}\label{eq:partition}\dim X\times_YX=\max_{\lambda}[\dim f^{-1}(T_{\lambda})\times_{T_{\lambda}}f^{-1}(T_{\lambda})].\end{equation}
	Plugging (\ref{eq:Tlam})
	into (\ref{eq:partition}) we get $r_5(f)=r_2(f)$. In particular, $r_2(f)$ is independent of the choice of the stratifications.
	
	\item $r_1(f)\le r_2(f)$: For every irreducible analytic subset $Z\subset X$, $f(Z)$ is an irreducible analytic subset of $Y$. Then $\{Y\setminus f(Z),f(Z)\}$ is a constructible stratification of $Y$.  Fact \ref{ft:relWhitney} yields a Whitney stratification $(X=\sqcup S_{\alpha},Y=\sqcup T_{\lambda})$ of $f$ with $Y=\sqcup T_{\lambda}$ refining $\{Y\setminus f(Z),f(Z)\}$. There exists $\lambda_0$ such that $T_{\lambda_0}$ is an open subset of $f(Z)$, hence $\dim T_{\lambda_0}\le \dim f(Z)$. Then $f^{-1}(T_{\lambda_0})\cap Z$ is a nonempty open subset of $Z$. Therefore, \[\dim Z=\dim(f^{-1}(T_{\lambda_0})\cap Z)\le \dim f^{-1}(T_{\lambda_0}).\] Then \[2\dim Z-\dim f(Z)\le 2\dim f^{-1}(T_{\lambda_0})-\dim T_{\lambda_0}=2\dim f^{-1}(y_{\lambda_0})+\dim T_{\lambda_0}.\] This shows $r_1(f)\le r_2(f)$. In particular, the maximum in (\ref{eq:r1}) is indeed attained.
	
\item $r_2(f)\le r_1(f)$: Fix a Whitney stratification $Y=\sqcup_{\lambda}T_{\lambda}$  defining $r_2(f)$. For every $\lambda$ with $f^{-1}(y_{\lambda})$ nonempty, $\overline{T_{\lambda}}$ is an analytic subset of $Y$ of dimension $\dim T_{\lambda}$. Then $f^{-1}(\overline{T_{\lambda}})$ is a nonempty analytic subset of $X$. Let $Z_0$ be an irreducible component of $f^{-1}(\overline{T_{\lambda}})$  with $\dim Z_0=\dim f^{-1}(\overline{T_{\lambda}})$. Then $f(Z_0)\subset \overline{T_{\lambda}}$ and $\dim f(Z_0)\le \dim T_{\lambda}$. Therefore, \[2\dim f^{-1}(y_{\lambda})+\dim T_{\lambda}=2\dim f^{-1}(T_{\lambda})-\dim T_{\lambda}\le 2\dim Z_0-\dim f(Z_0).\] This shows $r_2(f)\le r_1(f)$.
 \item  $r_2(f)\le r_3(f)$: By  Theorem \ref{thm:semifib}, $\{Y_i\}$ is a constructible stratification of $Y$. By Fact \ref{ft:relWhitney}, there is a   Whitney stratification $(X=\sqcup S_{\alpha},Y=\sqcup T_{\lambda})$ of $f$ such that the stratification $Y=\sqcup T_{\lambda}$ refines $Y=\sqcup_iY_i$. For every $\lambda$, there is $i_0$ with $T_{\lambda}\subset Y_{i_0}$. In particular, for every $y_{\lambda}\in T_{\lambda}$, one has $\dim f^{-1}(y_{\lambda})=i_0$, so \[2\dim f^{-1}(y_{\lambda})+\dim T_{\lambda}\le 2i_0+\dim Y_{i_0}.\] This shows   $r_2(f)\le r_3(f)$. 
	
	\item $r_3(f)\le r_2(f)$: For every integer $i\ge0$ with $Y_i$ nonempty, $Y_i=\sqcup_{\lambda}(Y_i\cap T_{\lambda})$ is a constructible stratification, so there is an index $\lambda_0$ with $\dim (Y_i\cap T_{\lambda_0})=\dim Y_i$. Then $\dim Y_i\le \dim T_{\lambda_0}$. One may take $y_{\lambda_0}\in Y_i\cap T_{\lambda_0}$.
	Then \[2i+\dim Y_i\le 2\dim f^{-1}(y_{\lambda_0})+\dim T_{\lambda_0},\] which shows $r_3(f)\le r_2(f)$. \end{itemize}
\end{proof}
From the diagonal inclusion $X\to X\times_YX$, one gets $\dim X\le \dim X\times_YX$, so $r(f)=r_5(f)\ge 0$.  If $r(f)=0$, then $f$ is said to be semismall.
\begin{eg}
\hfill\begin{enumerate}
\item If $f:X\to Y$ is a proper morphism of complex manifolds that is flat of relative dimension $r$, then $r(f)=r$. 
\item Let $X$ be projective manifold such that $-K_X$ is nef and $\alpha:X\to \Alb(X)$ be the Albanese map associated with some base point. Then $r(\alpha)=\dim X-\dim \alpha(X)$ by \cite[Theorem]{lu2010semistability}. 
\end{enumerate}
\end{eg}

\subsection{Direct image of local systems}
  Defect of semismallness is an important invariant appearing in the decomposition of direct image of perverse sheaves. Proposition \ref{pp:semismall} is an elementary instance. We begin with  a well-known estimation of  cohomological dimension of a complex analytic space, used in the proof of Proposition \ref{pp:semismall}. An analogue for topological manifolds is  \cite[Prop.~3.2.2 (iv)]{kashiwara2013sheaves}.
\begin{lm}\label{lm:cdofana}
	Let $X$ be a paracompact\footnote{in the sense of \cite[Def., p.253]{munkres2000topology}} complex analytic space of complex dimension $n$. Then $H^q(X,F)=0$ for every abelian sheaf $F$ on $X$ and every integer $q>2n$.
\end{lm}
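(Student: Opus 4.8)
The plan is to deduce this from the classical bound of sheaf-theoretic cohomological dimension by covering dimension for paracompact spaces. Since $H^q(X,F)$ for an abelian sheaf $F$ depends only on the underlying topological space, I may forget the analytic structure and work with $|X|$, which is Hausdorff and paracompact, hence metrizable by Smirnov's metrization theorem (it is also locally metrizable). So the whole question becomes a statement about a metrizable topological space.

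The core point is the estimate $\dim |X| \le 2n$ for the covering dimension. Locally, $X$ is isomorphic to a closed analytic subset $A$ of an open subset of some $\C^N$, and it suffices to bound $\dim A$. I would argue by induction on $d = \dim_{\C} A$: the regular locus $A_{\mathrm{reg}}$ is a real $C^\infty$-manifold of dimension $\le 2d \le 2n$, hence has covering dimension $\le 2n$; the singular locus $A_{\mathrm{sing}}$ is a closed analytic subset with $\dim_{\C} A_{\mathrm{sing}} < d$, so $\dim A_{\mathrm{sing}} \le 2n-2$ by induction. Writing the open set $A_{\mathrm{reg}}$ as a countable union of subsets closed in $A$ (using a metric on $A$), the countable closed sum theorem together with the subspace theorem for metrizable spaces gives $\dim A \le 2n$. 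Passing to a locally finite closed refinement of an atlas of $X$ and applying the sum and subspace theorems once more yields $\dim |X| \le 2n$ globally. Alternatively, one may invoke the triangulability of complex analytic sets (\L ojasiewicz, Giesecke) to see directly that $|X|$ underlies a locally finite simplicial complex of dimension $\le 2n$, whose covering dimension is therefore $\le 2n$.

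Finally I would invoke Grothendieck's theorem (T\^{o}hoku, Th\'eor\`eme 3.6.5; see also Godement or Bredon): for a paracompact space $Z$ with $\dim Z \le m$ and any abelian sheaf $F$ one has $H^q(Z,F) = 0$ for all $q > m$. Applying it with $Z = |X|$ and $m = 2n$ completes the proof.

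The main obstacle is the dimension estimate $\dim |X| \le 2n$: covering dimension is not a priori a local invariant, so the local-to-global step genuinely requires the sum and subspace theorems (or, alternatively, triangulability), and one must keep track of the reduction step where the singular locus drops in complex dimension. Once $\dim |X| \le 2n$ is established, the vanishing is an immediate citation.
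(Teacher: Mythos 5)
Your argument is correct, and its global skeleton is the same as the paper's: both reduce the lemma to the covering--dimension bound $\covdim(X)\le 2n$, globalize that bound via a locally finite closed cover together with the subspace and sum theorems (after securing normality through metrizability, exactly as the paper does via Smirnov/Kaup--Kaup), and then conclude by the general fact that sheaf cohomology of a paracompact space vanishes above its covering dimension --- you cite Grothendieck's T\^ohoku Th\'eor\`eme 3.6.5, the paper cites the Alexandroff inequality $\dim_{\Z}X\le\covdim X$ from Bredon, which is the same statement. The genuine difference is in the local dimension estimate. The paper covers $X$ by charts $U_{\alpha}$ admitting \emph{finite} morphisms onto open balls $B_{\alpha}\subset\C^{n}$ (local parametrization, from Grauert--Remmert) and invokes the Hurewicz-type theorem that a closed map with zero-dimensional fibers does not raise covering dimension, so $\covdim U_{\alpha}\le\covdim B_{\alpha}=2n$ in one stroke. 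You instead bound a local model $A\subset\C^{N}$ by induction on $\dim_{\C}A$, splitting off the smooth locus (a real manifold of dimension $\le 2n$, written as an $F_{\sigma}$ in the metrizable space $A$) from the singular locus (of strictly smaller complex dimension), and applying the countable closed sum theorem; the triangulability of analytic sets gives a second valid shortcut. Both routes are complete: the paper's needs the local parametrization theorem but avoids the induction and the $F_{\sigma}$ decomposition, while yours uses only the regular/singular stratification and standard dimension theory. One small point to make explicit in your version: replace $A$ by its reduction before stratifying, since the underlying topological space is unchanged and the regular/singular dichotomy you use is that of the reduced structure.
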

\begin{proof}
	By \cite[Prop., p.94]{grauert2012coherent}, there is an open covering $\{U_{\alpha}\}_{\alpha}$ of $X$ such that for each $\alpha$, there is a finite morphism $f_{\alpha}:U_{\alpha}\to B_{\alpha}$ of complex analytic spaces to an open ball  $B_{\alpha}\subset \C^n$. As $X$ is Hausdorff paracompact, by  \cite[Lemma 41.6]{munkres2000topology}, there exists a locally finite open covering $\{V_{\alpha}\}$ on $X$ such that $\overline{V_{\alpha}}\subset U_{\alpha}$ for each $\alpha$. 
	
	From  \cite[p.314]{munkres2000topology}, the topological dimension (\cite[Def., p.305]{munkres2000topology})  $\covdim (B_{\alpha})=2n$ for every $\alpha$. By \cite[Prop.~51 A.2]{kaup2011holomorphic}, the topological space $X$ is metrizable. From \cite[Thm.~32.2]{munkres2000topology}, each $U_{\alpha}$ is normal.
	 Therefore, by  \cite[Thm.~3.3.10, p.200]{engelking1995theory}, $\covdim (U_{\alpha})\le 2n$. By \cite[Theorem 3.1.3, p.169]{engelking1995theory}, $\covdim (\overline{V_{\alpha}})\le 2n$. Similarly, $X$ is normal, so  $\covdim (X)\le 2n$ by \cite[Thm.~3.1.10, p.172]{engelking1995theory}. By Alexandroff theorem (see, \textit{e.g.}, \cite[p.122]{bredon2012sheaf}), the cohomological dimension (\cite[p.75]{engelking1995theory}) $\dim_{\Z}X\le 2n$. 
\end{proof}
The category $D_c^b(X)$ has a natural perverse t-structure ($p$ being the middle perversity) \[({}^pD^{\le0}(X),{}^pD^{\ge0}(X)),\] whose heart $\Perv(X)$ is a $\C$-linear abelian category (\cite{beilinson2018faisceaux}, see also \cite[Thm.~8.1.27]{hotta2007d}). An object of $\Perv(X)$ is called a perverse sheaf on $X$.
For every  integer $i$, the functor taking the $i$-th perverse cohomology sheaf is denoted by ${}^p\cH^i:D_c^b(X)\to \Perv(X)$.
For any two integers $a\le b$,  set \begin{gather*}{}^pD^{[a,b]}(X):=\{\cK\in D_c^b(X):{}^p\cH^i(\cK)=0,\forall i\notin[a,b]\};\\
	D^{[a,b]}(X):=\{\cK\in D_c^b(X):\cH^i(\cK)=0,\forall i\notin[a,b]\}.
\end{gather*}

Verdier duality $\cD_X:D_c^b(X)\to D_c^b(X)$ is a contravariant auto-equivalence that interchanges ${}^pD^{\le0}(X)$ and ${}^pD^{\ge0}(X)$ (see, \textit{e.g.}, \cite[p.192]{hotta2007d}).

Proposition \ref{pp:semismall}  is an analytic analogue of \cite[Prop.~10.0.7]{de2003alg}. It  allows local coefficients and in our case permits  to descend  some problems  about  local systems on $X$ to  problems about  complexes of sheaves on $Y$. The proof is different from that in \cite{de2003alg}, in particular it does not use the  decomposition theorem \cite[Thm.~10.0.6]{de2003alg}.
\begin{pp}\label{pp:semismall}
	Let $f:X\to Y$ be a \emph{proper}  morphism of complex manifolds, where $X$ is of pure dimension $n$. Let $\cL$ a local system  on $X$. Then:
	\begin{enumerate}
		\item\label{it:withinrf}   $Rf_*(\cL[n])\in {}^pD^{[-r(f),r(f)]}(Y)$. In particular, $Rf_*\cL[n]\in\Perv(Y)$ when $f$ is moreover semismall. 
		\item\label{it:r6} When $\cL=\C_X$, for $j=\pm r(f)$, one has ${}^p\cH^j(Rf_*\C_X[n])\neq0$. In particular,  \begin{equation}\label{eq:r=r6}
			r(f)=r_6(f).
		\end{equation}
	\end{enumerate}
\end{pp}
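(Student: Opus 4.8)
The plan is to prove both statements by a purely sheaf-theoretic estimate on the stalks and costalks of $Rf_*(\cL[n])$ along the strata of a Whitney stratification of $f$, using the characterization of the perverse $t$-structure via support and cosupport conditions. First I would fix a Whitney stratification $(\mathfrak X,\mathfrak Y)$ of $f$ as furnished by Corollary~\ref{cor:allloc}, so that each $\cH^i(Rf_*(\cL[n]))$ restricts to a local system on every stratum $S\subset Y$; write $d_S=\dim_{\C}S$ and $e_S=\dim f^{-1}(y)$ for $y\in S$ (well-defined by Thom's first isotopy lemma, as already used in the proof that $r_2=r_5$). The base change isomorphism gives, for $y\in S$,
\begin{equation*}
\cH^i\big(i_y^{*}Rf_*(\cL[n])\big)=H^{i+n}\big(f^{-1}(y),\cL|_{f^{-1}(y)}\big),
\end{equation*}
and $f^{-1}(y)$ is a compact complex analytic space of dimension $e_S$, so Lemma~\ref{lm:cdofana} forces this to vanish unless $i+n\le 2e_S$, i.e. unless $i\le 2e_S-n$. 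Hence $\cH^i(Rf_*(\cL[n]))|_S=0$ for $i>2e_S-n$, and since $2e_S+d_S-n\le r(f)$ by the definition of $r_3(f)=r(f)$, we get $\cH^i(Rf_*(\cL[n]))|_S=0$ for $i>r(f)-d_S$. This is exactly the support condition $\dim\Supp\cH^i\le r(f)-i$ needed to place $Rf_*(\cL[n])$ in ${}^pD^{\le r(f)}(Y)$.

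For the cosupport (costalk) condition I would run the Verdier-dual argument. Since $f$ is proper, $\cD_Y\circ Rf_*=Rf_*\circ\cD_X$, and $\cD_X(\cL[n])=\cL^{\vee}[n]\otimes(\text{orientation data})$; concretely $\cD_X(\cL[n])\cong\cL^{\vee}[n]$ up to a shift by $2n-2n=0$ because $X$ is a complex manifold of pure dimension $n$ (the dualizing complex of $X$ is $\C_X[2n]$, so $\cD_X(\cL[n])=R\cHom(\cL[n],\C_X[2n])=\cL^{\vee}[n]$). Therefore the support estimate just proved, applied to the local system $\cL^{\vee}$, shows $Rf_*(\cL^{\vee}[n])\in{}^pD^{\le r(f)}(Y)$, and dualizing back yields $Rf_*(\cL[n])=\cD_Y Rf_*(\cL^{\vee}[n])\in{}^pD^{\ge -r(f)}(Y)$. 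Combining the two gives $Rf_*(\cL[n])\in{}^pD^{[-r(f),r(f)]}(Y)$, which is \ref{it:withinrf}; the semismall case $r(f)=0$ is the special case.

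For \ref{it:r6} and the equality $r(f)=r_6(f)$: by definition $r_6(f)=\max\{i:{}^p\cH^i(Rf_*\C_X[n])\neq0\}$, and part \ref{it:withinrf} with $\cL=\C_X$ already gives $r_6(f)\le r(f)$. For the reverse inequality I would exhibit a stratum where the bound is sharp. Choose $i_0\ge0$ attaining $r(f)=2i_0+\dim Y_{i_0}-n$ (using $r=r_3$), pick a stratum $S\subset Y_{i_0}$ open in $Y_{i_0}$ with $\dim S=\dim Y_{i_0}$, and a point $y\in S$. Then $f^{-1}(y)$ is a compact analytic space of pure-ish top dimension $i_0$, so $H^{2i_0}(f^{-1}(y),\C)\neq0$ (the top-dimensional irreducible components contribute a nonzero class — this uses that a compact complex space of dimension $i_0$ has nonvanishing $H^{2i_0}$, e.g. by the fundamental class of a top component), hence $\cH^{2i_0-n}(Rf_*\C_X[n])_y\neq0$. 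One checks, via the support/cosupport description of perverse cohomology along strata (the complex restricted to $S$, suitably shifted, computes ${}^p\cH^{\bullet}$ up to lower strata), that this forces ${}^p\cH^{j}(Rf_*\C_X[n])\neq0$ for $j=2i_0+\dim S-n=r(f)$; the statement for $j=-r(f)$ then follows by Verdier self-duality of $\C_X[n]$ (which makes $Rf_*\C_X[n]$ Verdier self-dual up to nothing, since $\cD_X(\C_X[n])=\C_X[n]$ would need $2n=n$—more precisely $\cD_X(\C_X[n])=\C_X[2n-n]=\C_X[n]$ is false in general; rather one uses $\cD_Y Rf_*\C_X[n]=Rf_*\C_X[n]$ because $\cD_X\C_X[n]=\omega_X[-n]=\C_X[n]$ only when... ) — so instead I would simply note $\cD_Y(Rf_*\C_X[n])\cong Rf_*(\cD_X\C_X[n])=Rf_*\C_X[n]$ since $\cD_X(\C_X[n])=\C_X[2n][-n]=\C_X[n]$, giving ${}^p\cH^{-j}\cong\cD({}^p\cH^{j})$ and hence nonvanishing at $-r(f)$ as well.

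The main obstacle I expect is the sharpness direction of \ref{it:r6}: upgrading the pointwise nonvanishing $\cH^{2i_0-n}(Rf_*\C_X[n])_y\neq0$ at a generic point of $Y_{i_0}$ to the assertion ${}^p\cH^{r(f)}(Rf_*\C_X[n])\neq0$. This requires knowing that the nonvanishing stalk cohomology, living in the maximal allowed degree on a stratum of the maximal allowed dimension, is not "killed" when passing to perverse cohomology — i.e. it genuinely generates a nonzero perverse cohomology sheaf supported on (the closure of) that stratum. The clean way is to restrict to the open subset $Y\setminus\overline{Y_{i_0}\setminus S}\supset S$ where $S$ is closed, apply the perverse truncation there, and use that on a smooth locally closed stratum a complex concentrated in the top degree relative to that stratum's dimension has nonzero top perverse cohomology; then observe that perverse cohomology sheaves only grow under restriction to opens in the sense needed. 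I would also need the small input that a compact complex analytic space of dimension $e$ has $H^{2e}(-,\C)\neq0$, which I would cite from standard references on analytic cycle classes, and the computation $\cD_X(\cL[n])=\cL^{\vee}[n]$ for a local system on a complex $n$-manifold, which is immediate from $\omega_X\cong\C_X[2n]$.
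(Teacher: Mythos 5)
Your proposal follows essentially the same route as the paper: the support estimate via proper base change plus the cohomological-dimension bound (Lemma \ref{lm:cdofana}) on the fibres over each stratum, the cosupport estimate by applying this to $\cL^{\vee}$ and invoking $\cD_Y Rf_* = Rf_*\cD_X$ with $\cD_X(\cL[n])=\cL^{\vee}[n]$, and for part \ref{it:r6} the nonvanishing of the top cohomology $H^{2i_0}(f^{-1}(y),\C)$ on a maximizing stratum. The one place you overcomplicate is the ``main obstacle'' you flag at the end: no restriction to opens is needed, since $\dim\Supp\cH^{i_0}(Rf_*\C_X[n])\ge r(f)-i_0$ already shows $Rf_*\C_X[n]\notin{}^pD^{\le r(f)-1}(Y)$, which together with part \ref{it:withinrf} forces ${}^p\cH^{r(f)}(Rf_*\C_X[n])\neq0$ directly.
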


\begin{proof}
From Corollary \ref{cor:allloc}, there exists a Whitney stratifications $(X=\sqcup_{\alpha}X_{\alpha},Y=\sqcup_{\lambda}Y_{\lambda})$ of $f$ such that  for every $\lambda$, every integer $j$, the  restriction $\cH^j(Rf_*\cL[n])|_{Y_{\lambda}}$ is a local system. For each $\lambda$, choose a point $y_{\lambda}\in Y_{\lambda}$.
	\begin{enumerate}
		\item First, we show that $Rf_*\cL[n]\in {}^pD^{\le r(f)}(Y)$. Fix  an integer $i$. 	If $\dim Y_{\lambda}>r(f)-i$, then by (\ref{eq:defectlam}), one has $i+n>2\dim f^{-1}(y_{\lambda})$. Since the fiber $f^{-1}(y_{\lambda})$ is a compact complex analytic space,  by Lemma \ref{lm:cdofana}, \[H^{i+n}(f^{-1}(y_{\lambda}),\cL|_{f^{-1}(y_{\lambda})})=0.\] 
		By proper base change theorem (see, \textit{e.g.}, \cite[Thm.~17.2]{milneLEC}), \[\cH^i(Rf_*\cL[n])_{y_{\lambda}}=H^{i+n}(f^{-1}(y_{\lambda}),\cL|_{f^{-1}(y_{\lambda})}).\] So $\cH^i(Rf_*\cL[n])=0$ on every stratum $Y_{\lambda}$ with $\dim Y_{\lambda}>r(f)-i$. Therefore, $\dim \Supp \cH^i(Rf_*\cL[n])\le r(f)-i$ and hence $Rf_*\cL[n]\in {}^pD^{\le r(f)}(Y)$. 
		
		It remains to show $Rf_*\cL[n]\in {}^pD^{\ge -r(f)}(Y)$. By what we have proved, $Rf_*\cL^{\vee}[n]\in {}^pD^{\le r(f)}(Y)$. Since $\cD_X(\cL[n])=\cL^{\vee}[n]$, 
		one has \[Rf_*\cL^{\vee}[n]=Rf_*\cD_X(\cL[n])=\cD_Y(Rf_*\cL[n]).\] The last  equality uses Verdier's duality (see, \textit{e.g.}, \cite[Prop.~5.3.9]{maxim2019intersection}). This shows  $Rf_*\cL[n]\in {}^pD^{\ge-r(f)}(Y)$.
		
	\item By (\ref{eq:defectlam}), there exists $\lambda_0$ with $r(f)=2\dim f^{-1}(y_{\lambda_0})+\dim Y_{\lambda_0}-n$. In particular, $f^{-1}(y_{\lambda_0})$ is nonempty. Let $i_0=r(f)-\dim Y_{\lambda_0}$, then $i_0+n=2\dim f^{-1}(y_{\lambda_0})$. By proper base change theorem again, \[\cH^{i_0}(Rf_*\C[n])_{y_{\lambda}}=H^{i_0+n}(f^{-1}(y_{\lambda}),\C)\neq0.\] Therefore, $Y_{\lambda_0}\subset \Supp \cH^{i_0}(Rf_*\C[n])$ and hence \[\dim \Supp \cH^{i_0}(Rf_*\C[n])\ge \dim Y_{\lambda_0}=r(f)-i_0.\] Then $Rf_*\C[n]\notin {}^pD^{\le r(f)-1}(Y)$. Together with Point \ref{it:withinrf}, this shows \[{}^p\cH^{r(f)}(Rf_*\C_X[n])\neq0.\] The other part follows from Verdier's duality.
\end{enumerate}\end{proof}

	\section{Generic vanishing for constructible sheaves}\label{sec:KW}
In Section \ref{sec:KW}, we review the generic vanishing theorem for (complexes of) constructible sheaves on a complex torus. The  case of abelian varieties is treated in \cite{kramer2015vanish} and the general case in \cite{bhatt2018vanishing}. We shall reduce the generic vanishing problem on a manifold in Fujiki class $\cC$  to  results on its Albanese torus.
\subsection{Thin subsets}
To state Krämer-Weissauer's theorem, we recall the terminology ``thin subset" introduced in \cite[p.532 and p.536]{kramer2015vanish}.

Fix a complex torus $A$. Then $\car(A)$ has a natural structure of algebraic torus over $\C$ of dimension $2\dim A$ and $T(A)=\car^u(A)$ is a real Lie subgroup of $\car(A)$.  For each complex subtorus $B\subset A$, let $K(B)$ be the kernel of the morphism of algebraic tori $\car(A)\to \car(B)$ induced by functoriality.  The induced morphism $\pi_1(B,0)\to \pi_1(A,0)$ is  injective with torsion-free cokernel  of rank $2\dim A-2\dim B$, so $K(B)$ is an  algebraic subtorus of $\car(A)$.
\begin{df}A thin subset of $\car(A)$ is a finite union of translates $\chi_i\cdot K(A_i)$ for certain characters $\chi_i\in \car(A)$ and certain \emph{nonzero} complex subtori $A_i\subset A$.  If every $\chi_i$ can be chosen to be a torsion point of $\car(A)$, then such a thin subset is called \emph{arithmetic}.\end{df}
A thin subset of $\car(A)$ is  strict and Zariski closed. If the complex torus $A$ is nonzero and \emph{simple}, then a subset of $\car(A)$ is thin if and only if it is finite.

For each complex subtorus $B\subset A$,  we have a functorial commutative diagram \begin{equation}\label{cd:CLP}
\begin{tikzcd}
	\car^u(A) \arrow[r, "(\ref{eq:Piu=Loc})"] \arrow[d,"\phi"] & {\Loc^{u,1}(A)} \arrow[r, "(\ref{eq:LocutoPic})"] \arrow[d] & \Pic^{\tau}(A) \arrow[d] & \Pic^0(A) \arrow[d,"\psi"] \arrow[l, "(\ref{eq:Pic0Pictau})"] \\
	\car^u(B) \arrow[r]                                 & {\Loc^{u,1}(B)} \arrow[r]                                  & \Pic^{\tau}(A)           & \Pic^0(B) \arrow[l]                                   
\end{tikzcd}
\end{equation} where all the horizontal maps are isomorphisms by Corollary \ref{cor:chartoPic} \ref{it:Loc1Pictau}.

A subset of $\Pic^0(A)$ is called (arithmetic and) thin, if it is the intersection of $\car^u(A)$ with a (arithmetic and) thin subset of $\car(A)$ when $\Pic^0(A)$ is identified with $\car^u(A)$ \textit{via} the diagram (\ref{cd:CLP}). 

\begin{lm}\label{lm:thintrans}Every thin subset of $\Pic^0(A)$ is a finite union of translates of strict complex subtori. \end{lm}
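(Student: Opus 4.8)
The plan is to unwind the definitions and reduce to the behavior of the subtori $K(A_i)$ under the identification $\car^u(A)\cong \Pic^0(A)$. By definition, a thin subset $T\subset \Pic^0(A)$ is the intersection with $\car^u(A)$ of a finite union $\bigcup_i \chi_i\cdot K(A_i)$ of translates of the algebraic subtori $K(A_i)\subset \car(A)$, where $A_i\subset A$ is a nonzero complex subtorus. Since intersection with $\car^u(A)$ commutes with finite unions, it suffices to treat a single translate $\chi\cdot K(B)$ with $B\subset A$ a nonzero complex subtorus, and to show that $\big(\chi\cdot K(B)\big)\cap \car^u(A)$, viewed inside $\Pic^0(A)$ via (\ref{cd:CLP}), is a (possibly empty) translate of a complex subtorus of $\Pic^0(A)$.

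First I would identify $K(B)\cap \car^u(A)$ with a complex subtorus of $\Pic^0(A)$. From the diagram (\ref{cd:CLP}), the surjection $\psi:\Pic^0(A)\to \Pic^0(B)$ corresponds, under the horizontal isomorphisms, to the restriction map $\car^u(A)\to\car^u(B)$, which is itself the restriction to unitary points of the algebraic surjection $\car(A)\to\car(B)$ whose kernel is $K(B)$. Hence $K(B)\cap\car^u(A)=\ker(\psi)$ inside $\Pic^0(A)$. Now $\psi$ is dual (in the sense of duality of complex tori, cf. Proposition \ref{pp:Alb} \ref{it:AlbPic0}) to the inclusion $B\hookrightarrow A$, so $\psi$ is a surjective morphism of complex tori and $\ker(\psi)$ is a complex subtorus of $\Pic^0(A)$; in fact it is the connected component containing $0$ together with finitely many components, but what matters is that it is a closed complex-analytic subgroup which is itself an extension of a finite group by a complex torus — and since $\Pic^0(A)$ is a complex torus, any such closed subgroup is a finite union of translates of its identity component, which is a subtorus. (Alternatively one checks directly that $\ker(\psi)$ is the image of $\Pic^0(A/B)\to\Pic^0(A)$, visibly a subtorus.)

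Next I would handle the translate. Pick any $\lambda\in\car^u(A)$ lying in $\chi\cdot K(B)$ if such a point exists; if none exists, then $\big(\chi\cdot K(B)\big)\cap\car^u(A)=\emptyset$, which is vacuously a finite (empty) union of translates of subtori. If $\lambda$ exists, then $\chi\cdot K(B)=\lambda\cdot K(B)$, and intersecting with the subgroup $\car^u(A)$ gives $\lambda\cdot\big(K(B)\cap\car^u(A)\big)=\lambda\cdot\ker(\psi)$, a single translate of the complex subtorus $\ker(\psi)^{\circ}$ by an element of $\Pic^0(A)$ — or rather a finite union of such translates, one for each connected component of $\ker(\psi)$. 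Taking the union over the finitely many indices $i$ then exhibits any thin subset of $\Pic^0(A)$ as a finite union of translates of strict complex subtori (strict because each $A_i$ is nonzero, so each $\ker(\psi_i)$ has positive codimension — equivalently $K(A_i)\neq\car(A)$).

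The main obstacle is bookkeeping rather than depth: one must be careful that $K(B)\cap\car^u(A)$ need not be connected, so ``translate of a subtorus'' should be read as allowing a finite union (which is harmless for the statement), and one must correctly match the algebraic kernel $K(B)\subset\car(A)$ with the analytic kernel $\ker(\psi)\subset\Pic^0(A)$ through the three horizontal isomorphisms of (\ref{cd:CLP}), all of which are supplied by Corollary \ref{cor:chartoPic} \ref{it:Loc1Pictau}. The only genuinely geometric input is that a surjection of complex tori has kernel whose identity component is again a complex torus, together with the duality $\Pic^0(A)\leftrightarrow A$ from Proposition \ref{pp:Alb}, which guarantees $\psi$ is a morphism of complex tori in the first place.
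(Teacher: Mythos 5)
Your proposal is correct and follows essentially the same route as the paper: identify $K(B)\cap\car^u(A)$ with the kernel of the dual surjection $\psi:\Pic^0(A)\to\Pic^0(B)$ via the diagram (\ref{cd:CLP}) and observe that this kernel is a strict complex subtorus, the translates and finite unions being routine bookkeeping. The only difference is that you hedge about $\ker(\psi)$ possibly being disconnected, whereas the paper notes that the cokernel of $\pi_1(B,0)\to\pi_1(A,0)$ is torsion-free, so the unitary character group of the quotient is already connected and $\ker(\psi)$ is genuinely a subtorus; your fallback (finitely many translates of the identity component) is harmless either way.
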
\begin{proof}Let $B$ be a subtorus of $A$. As the induced morphism $\pi_1(B,0)\to \pi_1(A,0)$ is  injective with torsion-free cokernel  of rank $2(\dim A-\dim B)$,  the  restriction morphism $\phi:\car^u(A)\to \car^u(B)$ in   (\ref{cd:CLP}) is  surjective, and its kernel  $K(B)\cap \car^u(A)$ is the group of unitary characters of $\pi_1(A,0)/\pi_1(B,0)$. Therefore,
the kernel of the   morphism $\psi:\Pic^0(A)\to \Pic^0(B)$ in  (\ref{cd:CLP}) is a complex subtorus of dimension $\dim A-\dim B$. \end{proof}

For a connected regular manifold $X$, let $\alpha:X\to \Alb(X)$ be its Albanese morphism corresponding to some base point. Then $\alpha$ induces a morphism $\alpha^*:\car(\Alb(X))\to \car(X)$ of algebraic groups. By Proposition \ref{pp:Alb} \ref{it:AlbH1}, this map identifies $\car(\Alb(X))$ with the identity component $\car^{\circ}(X)$ of $\car(X)$. Thus we can define thin subsets of $\car^{\circ}(X)$. By Proposition \ref{pp:Alb} \ref{it:AlbPic0}, $\Pic^0(X)$ is naturally identified with $\Pic^0(\Alb(X))$, thus we can define (arithmetic and) thin subsets of $\Pic^0(X)$.
\subsection{Generic vanishing result on regular manifolds}
Roughly speaking, Krämer-Weissauer's theorem controls the failure of  vanishing for perverse sheaves on complex tori, measured by the following loci.

Let $X$ be a  compact complex manifold of dimension $d$. For any integers $k\ge0$, $i$ and for every  $\cK\in D_c^b(X)$,  consider the cohomology support locus  \[\Sigma^i(X,\cK):=\{\chi\in \car(X): H^i(X,\cL_{\chi}\otimes \cK)\neq0\}.\] 
Let $\Sigma^{\neq0}(X,K):=\cup_{i\neq0,i\in \Z}\Sigma^i(X,K)$. Similarly, let $\Sigma^{>j}(X,K):=\cup_{i>j}\Sigma^i(X,K)$ for every integer $j$.
By Verdier's duality, $H^{2d-i}(X,\cK^{\vee}\otimes \cL_{\chi^{-1}})$ is the $\C$-linear dual of $H^i(X,\cK\otimes \cL_{\chi})$. Therefore,
\begin{equation}\label{eq:Sigmadual}
	\Sigma^{2d-i}(X,\cK^{\vee})=\{\chi^{-1}:\chi\in \Sigma^i(X,\cK)\}.
\end{equation}

\begin{ft}\label{ft:vanish}Let $X$ be a compact Kähler manifold, $\cK\in D_c^b(X)$. Then: \begin{enumerate}
		\item \textup{({\cite[p.547]{wang2016torsion}})} \label{it:cls} For every integer $i$, the subset $\Sigma^i(X,\cK)$ of $\car(X)$ is Zariski closed.
		\item \textup{(\cite[Thm.~1.1]{bhatt2018vanishing})}\label{it:torus} If $X$ is a complex torus, and if $\cK\in \Perv(X)$, then $\Sigma^{\neq0}(X,\cK)$ is a strict subset of $\car(X)$.
		\item \textup{(\cite[Thm.~1.1 and Lem.~11.2 (c)]{kramer2015vanish})}\label{it:abel} If  $X$ is a complex abelian variety, then $\Sigma^{\neq0}(X,\cK)$ is contained in a thin  (and  arithmetic when $\cK$ is semisimple of geometric origin\cref{foot:geoori}) subset of $\car(X)$. \end{enumerate}
\end{ft}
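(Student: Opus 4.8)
The plan is the following. Parts \ref{it:torus} and \ref{it:abel} are the cited theorems transported into the present notation, so for these I would only spell out the dictionary: a rank-$1$ local system $\cL_\chi$ here is a ``character sheaf'' in the sense of \cite{bhatt2018vanishing} and \cite{kramer2015vanish}; the assertion ``$\Sigma^{\neq0}(X,\cK)$ is a strict (resp.\ thin, resp.\ arithmetic thin) subset of $\car(X)$'' unwinds to ``$H^i(X,\cL_\chi\otimes\cK)=0$ for all $i\neq0$ whenever $\chi$ lies outside such a subset''; and ``semisimple of geometric origin'' is exactly the notion used there. With this dictionary, \ref{it:torus} is \cite[Thm.~1.1]{bhatt2018vanishing}, and \ref{it:abel} is \cite[Thm.~1.1]{kramer2015vanish} together with its refinement \cite[Lem.~11.2 (c)]{kramer2015vanish}, which places the exceptional locus inside a thin, and in the geometric-origin case arithmetic, subset.

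Only \ref{it:cls} calls for an argument, and here I would run the usual semicontinuity argument over the coordinate ring of the character group. Set $R:=\C[H_1(X,\Z)]$, so that $\car(X)=\mathrm{Spec}\,R$ as a diagonalizable group; the tautological homomorphism $\pi_1(X,x_0)\to R^\times$ defines a local system $\cL^{\mathrm{univ}}$ of free rank-$1$ $R$-modules on $X$ with $\cL^{\mathrm{univ}}\otimes_{R,\chi}\C\cong\cL_\chi$ for every $\chi\in\car(X)$. Since $\cL^{\mathrm{univ}}$ is $R$-flat, $\cL^{\mathrm{univ}}\otimes_\C\cK$ is a bounded complex of sheaves of $R$-modules whose cohomology sheaves are local systems of finitely generated $R$-modules along a finite Whitney stratification of $X$ adapted to $\cK$, which exists by constructibility and compactness (cf.\ Corollary \ref{cor:common}). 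Choosing a finite triangulation of $X$ compatible with such a stratification, the derived global sections $R\Gamma(X,\cL^{\mathrm{univ}}\otimes_\C\cK)$ are computed by a \emph{bounded} complex $P^\bullet$ of finitely generated free $R$-modules. Because $P^\bullet$ is a bounded complex of free modules, base change to a closed point yields $H^i(X,\cL_\chi\otimes\cK)\cong H^i(P^\bullet\otimes_R\kappa(\chi))$ for all $i$ and all $\chi$, whence $\Sigma^i(X,\cK)=\{\chi:\dim_{\kappa(\chi)}H^i(P^\bullet\otimes_R\kappa(\chi))\ge1\}$ is the common zero locus of the Fitting ideals attached to the differentials $P^{i-1}\to P^i\to P^{i+1}$, hence Zariski closed in $\car(X)$. (One may instead simply refer to \cite[p.547]{wang2016torsion}, where this computation is carried out.)

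The substantive content of the statement lies entirely in \ref{it:torus} and \ref{it:abel}, whose proofs in \cite{bhatt2018vanishing} and \cite{kramer2015vanish} rest on the Tannakian/convolution formalism for perverse sheaves on complex tori, and, for the arithmetic sharpening, on spreading-out and $\ell$-adic weight arguments; I would not reprove these. The only mild obstacle inside \ref{it:cls} is the uniformity needed to represent $R\Gamma(X,\cL^{\mathrm{univ}}\otimes_\C\cK)$ by a finite free complex of $R$-modules compatibly with base change to points of $\car(X)$; this is supplied by the compactness of $X$ together with the existence of a single finite stratification trivializing all the cohomology sheaves of $\cK$, after which the Fitting-ideal description makes the Zariski-closedness of $\Sigma^i(X,\cK)$ formal.
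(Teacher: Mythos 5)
The paper gives no proof of this Fact at all—it is stated purely by citation to Wang, Bhatt–Schnell–Scholze, and Krämer–Weissauer—and your treatment of parts \ref{it:torus} and \ref{it:abel} defers to exactly the same sources with the correct dictionary. Your sketch for part \ref{it:cls} (the universal rank-one local system over $\C[H_1(X,\Z)]$, a bounded perfect complex computing $R\Gamma$ compatibly with base change, and the determinantal/Fitting-ideal description of the jump loci) is precisely the computation carried out on the cited page of Wang's paper, so the proposal is correct and takes essentially the same route as the paper's (purely referential) treatment.
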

\begin{cor}\label{cor:speccls}Let $X$ be a compact Kähler manifold, and let $\cK\in D_c^b(X)$.  Then: \begin{enumerate}
		\item	 There are only finitely many integers $i$ such that $\Sigma^i(X,\cK)\neq\emptyset$. In particular,
		$\Sigma^{\neq0}(X,\cK)$ and for every integer $j$, $\Sigma^{>j}(X,\cK)$ are Zariski closed in $\car(X)$.
		\item \label{it:Sigma>m}If $X$ is  a complex torus,  and if $\cK\in {}^pD^{\le m}(X)$ for  some integer $m$, then $\Sigma^{>m}(X,\cK)\neq\car(X)$.
		\item \label{it:Sigmathin} If $X$ is a complex abelian variety,  and $\cK\in {}^pD^{\le m}(X)$ for  some integer $m$, then $\Sigma^{>m}(X,\cK)$ is contained in a thin (and arithmetic when $\cK$ is semisimple of geometric origin) subset of $\car(X)$.\end{enumerate}
\end{cor}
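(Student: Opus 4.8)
The plan is to prove the three items in order: the first by a uniform boundedness estimate for (hyper)cohomology, and the last two by reducing to the perverse-sheaf statements of Fact \ref{ft:vanish} via the perverse filtration.

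For Part 1, note that since $\cK\in D_c^b(X)$ its cohomology sheaves $\cH^q(\cK)$ vanish for $q$ outside some finite interval $[a,b]$; the same interval works for $\cH^q(\cL_\chi\otimes\cK)=\cL_\chi\otimes\cH^q(\cK)$ for every $\chi\in\car(X)$, because $\cL_\chi$ is locally constant of rank $1$. As $X$ is a compact, hence paracompact, complex manifold of some dimension $d$, Lemma \ref{lm:cdofana} gives $H^p(X,F)=0$ for every abelian sheaf $F$ and every $p>2d$. Feeding this into the hypercohomology spectral sequence $E_2^{p,q}=H^p(X,\cH^q(\cL_\chi\otimes\cK))\Rightarrow H^{p+q}(X,\cL_\chi\otimes\cK)$ shows that $H^i(X,\cL_\chi\otimes\cK)=0$ whenever $i\notin[a,b+2d]$, uniformly in $\chi$. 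Hence only finitely many $i$ make $\Sigma^i(X,\cK)$ nonempty, and both $\Sigma^{\neq0}(X,\cK)$ and $\Sigma^{>j}(X,\cK)$ are finite unions of the sets $\Sigma^i(X,\cK)$ with $i\in[a,b+2d]$, each of which is Zariski closed by Fact \ref{ft:vanish} \ref{it:cls}.

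For Parts 2 and 3, the key observation is that tensoring by the rank-$1$ local system $\cL_\chi$ is exact for the perverse $t$-structure: it does not change the supports of cohomology sheaves (so ${}^pD^{\le0}$ is preserved) nor, by Verdier duality together with $\cD_X(\cL_\chi\otimes-)=\cL_\chi^{\vee}\otimes\cD_X(-)$, the costalk conditions. Consequently ${}^p\cH^j(\cL_\chi\otimes\cK)=\cL_\chi\otimes{}^p\cH^j(\cK)$, and $\cL_\chi\otimes\cK\in{}^pD^{\le m}(X)$ once $\cK$ is. Applying $R\Gamma(X,-)$ to the tower of perverse truncations of $\cL_\chi\otimes\cK$ yields, for each $i$, a finite filtration of $H^i(X,\cL_\chi\otimes\cK)$ whose graded pieces are subquotients of $H^{i-j}(X,\cL_\chi\otimes{}^p\cH^j(\cK))$; only the $j\le m$ contribute, since ${}^p\cH^j(\cK)=0$ for $j>m$. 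Therefore, if $i>m$, then $i-j>0$ for every contributing $j$, and we obtain
\[
\Sigma^{>m}(X,\cK)\ \subseteq\ \bigcup_{j\le m}\Sigma^{\neq0}\bigl(X,{}^p\cH^j(\cK)\bigr),
\]
a finite union. When $X$ is a complex torus, each ${}^p\cH^j(\cK)$ is a perverse sheaf, so each term on the right is a strict subset of the irreducible algebraic torus $\car(X)$ by Fact \ref{ft:vanish} \ref{it:torus} and is Zariski closed by Part 1; a finite union of proper Zariski-closed subsets of an irreducible variety is again proper, which gives Part 2. When $X$ is an abelian variety, Fact \ref{ft:vanish} \ref{it:abel} bounds each $\Sigma^{\neq0}(X,{}^p\cH^j(\cK))$ by a thin subset, and a finite union of thin subsets is thin, giving Part 3; if moreover $\cK$ is semisimple of geometric origin, then $\cK\cong\bigoplus_j{}^p\cH^j(\cK)[-j]$ with each ${}^p\cH^j(\cK)$ a semisimple perverse sheaf of geometric origin, so each bounding thin subset can be taken arithmetic, hence so is their union.

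The substantive input is entirely imported from Fact \ref{ft:vanish}, i.e.\ from the work of Bhatt--Schnell--Scholze and of Kr\"amer--Weissauer; what remains is essentially formal. I expect the only points needing care to be the $t$-exactness of $\cL_\chi\otimes(-)$ for the perverse $t$-structure and the bookkeeping of the perverse filtration, together with the verification that ``semisimple of geometric origin'' is inherited by the perverse cohomology sheaves; none of these should present a serious obstacle.
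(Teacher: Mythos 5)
Your proposal is correct and follows essentially the same route as the paper: bound the cohomological amplitude uniformly in $\chi$ for Part 1, then use perverse $t$-exactness of $\cL_\chi\otimes(-)$ to reduce Parts 2 and 3 to Fact \ref{ft:vanish} applied to the perverse cohomology sheaves ${}^p\cH^j(\cK)$, arriving at the same inclusion $\Sigma^{>m}(X,\cK)\subseteq\bigcup_j\Sigma^{\neq0}(X,{}^p\cH^j(\cK))$. The only differences are cosmetic --- you get the amplitude bound from Lemma \ref{lm:cdofana} and the hypercohomology spectral sequence where the paper cites Kashiwara--Schapira, and you use the tower of perverse truncations where the paper uses the equivalent Grothendieck (perverse Leray) spectral sequence; you also make explicit the inheritance of ``semisimple of geometric origin'' by the ${}^p\cH^j(\cK)$, which the paper leaves implicit.
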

\begin{proof}The proof is sketched in \cite[p.533]{kramer2015vanish}. 
	\begin{enumerate}
		\item There exist two integers $c<d$ such that $\cK\in D^{[c,d]}(X)$. Applying \cite[Proposition 10.2.12]{kashiwara2013sheaves} to the proper morphism $X\to p$, where $p$ is a point, one gets two integers $a<b$ such that $Rf_*(D^{[c,d]}(X))\subset D^{[a,b]}(p)$. For every  character sheaf $\cL$  on $X$, the functor $*\otimes^L \cL:D_c^b(X)\to D_c^b(X)$ is  t-exact with respect to the standard t-structure. Consequently,  $\cK\otimes^L \cL\in D^{[c,d]}(X)$ and hence $Rf_*(\cK\otimes^L \cL)\in D^{[a,b]}(p)$. For all integers $i\notin [a,b]$, $\Sigma^i(X,\cK)=\emptyset$. This shows the first part of the assertion. The second part of the assertion follows from Fact \ref{ft:vanish} \ref{it:cls}.

		\item By shifting degree, one may assume  $m=0$. For every  character sheaf $\cL$  on $X$,  the functor $*\otimes^L \cL:D_c^b(X)\to D_c^b(X)$ is t-exact with respect to the perverse t-structure (\cite[Prop.~4.1]{kramer2015vanish}). Hence for every integer $j$, ${}^p\cH^j(\cK\otimes^L \cL)={}^p\cH^j(\cK)\otimes^L \cL$.
	Consider the subset \begin{equation}\label{eq:W}W=\cup_{j\in \Z}\Sigma^{\neq0}(X,{}^p\cH^j(\cK))\end{equation}of $\car(X)$. It is in fact a finite union, because by \cite[Remark 5.1.19]{dimca2004sheaves}, ${}^p\cH^j(K)\neq0$ for only finitely many integers $j$. By Fact \ref{ft:vanish} \ref{it:torus}, $W\neq \car(X)$. 
	
	For  every $\chi\in \car(X)\setminus W$, consider the Grothendieck spectral sequence from \cite[p.545]{de2009decomposition} \begin{equation}\label{eq:Gro}E_2^{i,j}=H^i(X,{}^p\cH^j(\cK)\otimes^L  \cL_{\chi})\Rightarrow H^{i+j}(X,\cK\otimes^L  \cL_{\chi}).
	\end{equation}
	
	For any integers $i\neq0$ and $j$, one has $H^i(X,{}^p\cH^j(\cK)\otimes^L \cL_{\chi})=0$, so the spectral sequence (\ref{eq:Gro})
	degenerates\footnote{in the sense of \cite[\href{https://stacks.math.columbia.edu/tag/011O}{Tag 011O (2)}]{stacks-project}} at page $E_2$ and hence \[H^j(X,\cK\otimes^L \cL_{\chi})=H^0(X,{}^p\cH^j(\cK)\otimes^L \cL_{\chi})\] for every integer $j$. Now that $
	\cK\in {}^pD^{\le0}(X)$, for every $i>0$ one has  ${}^p\cH^i(\cK)=0$ and hence $H^i(X,\cK\otimes^L \cL_{\chi})=0$. This shows $\chi\notin \Sigma^{>0}(X,\cK)$. One concludes that $\Sigma^{>0}(X,\cK)\subset W$.  
	\item As ${}^p\cH^j(\cK)\neq0$ for only finitely many integers $j$, by Fact \ref{ft:vanish} \ref{it:abel},  the subset $W$ defined by (\ref{eq:W}) is contained in a thin (and arithmetic when $\cK$ is semisimple of geometric origin) subset of $\car(X)$.
	\end{enumerate}
\end{proof}

Theorem \ref{thm:premain} is a   generic vanishing result for local systems on a manifold admitting Hodge theory. When  $X$ is a projective manifold,  \cite[Theorem 1.5]{popa2013generic} gives  a dimension estimate of $\Sigma^k(X,\C_X)$.
\begin{thm}\label{thm:premain}
	Let $X$ be a connected regular manifold of  dimension $n$. Let $\alpha:X\to \Alb(X)$ be the Albanese map associated with some base point and $\cE$ be a  local system on $X$. Let    $k$ be an integer either $<n-r(\alpha)$ or $>n+r(\alpha)$. Then: \begin{enumerate}
		\item\label{it:Sigstri} $\Sigma^k(X,\cE)\cap \car^{\circ}(X)$ is a strict Zariski closed subset of $\car^{\circ}(X)$.
		\item \label{it:thin} If furthermore $\Alb(X)$ is algebraic, then $\Sigma^k(X,\cE)\cap \car^{\circ}(X)$ is
		contained in a  thin subset of $\car^{\circ}(X)$.
	\end{enumerate}
\end{thm}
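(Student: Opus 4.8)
The plan is to push the problem forward along the Albanese map to the torus $A:=\Alb(X)$ and apply the generic vanishing results on complex tori recalled in Section~\ref{sec:KW}. Write $d=\dim A$ and $r=r(\alpha)$. Since $X$ is compact (so $\alpha$ is proper) and connected (so of pure dimension $n$), Proposition~\ref{pp:semismall}~\ref{it:withinrf} gives $\cK:=R\alpha_*(\cE[n])\in{}^pD^{[-r,r]}(A)$; in particular $\cK\in{}^pD^{\le r}(A)$, and $\cK\in D_c^b(A)$ by Fact~\ref{ft:amplitude}. Dually, since Verdier duality exchanges ${}^pD^{\le0}$ and ${}^pD^{\ge0}$ and commutes with proper pushforward, $\cD_A\cK\cong R\alpha_*(\cE^{\vee}[n])$ again lies in ${}^pD^{\le r}(A)$. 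Note also that $A$, being a complex torus (and an abelian variety when $\Alb(X)$ is algebraic), is compact Kähler, so the results of Section~\ref{sec:KW} apply to it.

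Next I would transport the jumping locus to $A$. By Proposition~\ref{pp:Alb}~\ref{it:AlbH1} the pullback $\alpha^*:\car(A)\to\car^{\circ}(X)$ is an isomorphism, and for $\chi\in\car(A)$ the character sheaf attached to $\alpha^*\chi$ on $X$ is $\alpha^{-1}\cL_\chi$. The projection formula $R\alpha_*(\cE\otimes\alpha^{-1}\cL_\chi)\cong(R\alpha_*\cE)\otimes\cL_\chi$, followed by taking cohomology on $A$, yields $H^i(X,\cE\otimes\alpha^{-1}\cL_\chi)\cong H^{i-n}(A,\cK\otimes\cL_\chi)$ for every $i$. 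Hence, under the identification $\car^{\circ}(X)=\car(A)$,
\[
\Sigma^k(X,\cE)\cap\car^{\circ}(X)=\Sigma^{k-n}(A,\cK),
\]
and it suffices to control the right-hand side.

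If $k>n+r$, then $k-n>r$, so $\Sigma^{k-n}(A,\cK)\subseteq\Sigma^{>r}(A,\cK)$, which is a strict Zariski closed subset of $\car(A)$ by Corollary~\ref{cor:speccls}~\ref{it:Sigma>m} (using $\cK\in{}^pD^{\le r}(A)$) and is moreover contained in a thin subset when $A$ is algebraic, by Corollary~\ref{cor:speccls}~\ref{it:Sigmathin}; since $\Sigma^{k-n}(A,\cK)$ is itself Zariski closed by Fact~\ref{ft:vanish}~\ref{it:cls}, both assertions follow. If $k<n-r$, then $k-n<-r$, and I would use the Verdier duality identity~\eqref{eq:Sigmadual} on $A$ (with $\cD_A\cK$ in place of $\cK$) to rewrite $\Sigma^{k-n}(A,\cK)$ as the image under $\chi\mapsto\chi^{-1}$ of $\Sigma^{\,2d-(k-n)}(A,\cD_A\cK)$. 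Since $d\ge0$ we have $2d-(k-n)>r$, so $\Sigma^{\,2d-(k-n)}(A,\cD_A\cK)\subseteq\Sigma^{>r}(A,\cD_A\cK)$, to which the same two corollaries apply because $\cD_A\cK\in{}^pD^{\le r}(A)$; one finishes by noting that the automorphism $\chi\mapsto\chi^{-1}$ of the algebraic group $\car(A)$ carries subtori to subtori, hence preserves being strict, Zariski closed, and thin.

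I do not expect a genuinely hard step to remain once Proposition~\ref{pp:semismall} and Corollary~\ref{cor:speccls} are available: the substance is concentrated in those two inputs. The care is in the bookkeeping — getting the degree shift right in the projection-formula identity $\Sigma^k(X,\cE)\cap\car^{\circ}(X)=\Sigma^{k-n}(A,\cK)$, matching $\car^{\circ}(X)$ with $\car(A)$ correctly, and treating the range $k<n-r$ via Verdier duality while checking that inversion and the identifications preserve thinness. It is also worth noting that the low-degree case genuinely uses the two-sided bound ${}^pD^{[-r,r]}$ of Proposition~\ref{pp:semismall}, not merely its upper half.
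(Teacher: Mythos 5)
Your argument is correct and is essentially the paper's own proof: push forward along $\alpha$, identify $\Sigma^k(X,\cE)\cap\car^{\circ}(X)$ with a cohomology support locus on $\Alb(X)$ via the projection formula, bound the perverse amplitude of the pushforward by Proposition \ref{pp:semismall}, and conclude with Fact \ref{ft:vanish} \ref{it:cls} and Corollary \ref{cor:speccls}; the paper only differs cosmetically, normalizing the shift as $\cK=R\alpha_*\cE[n+r(\alpha)]$ so that Corollary \ref{cor:speccls} is invoked with $m=0$, and performing the Verdier-duality reduction upstairs on $X$ via (\ref{eq:Sigmadual}) before pushing forward rather than downstairs on $A$. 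The one blemish is a harmless normalization slip in your low-degree case: with the convention of (\ref{eq:Sigmadual}) the dual object there is $\cD_A\cK[-2d]$, so the relevant index is $-(k-n)$ rather than $2d-(k-n)$ for $\cD_A\cK$ itself, but both exceed $r(\alpha)$, so the containment in $\Sigma^{>r(\alpha)}$ and the conclusion are unaffected.
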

\begin{proof}In view of (\ref{eq:Sigmadual}), one may assume  $k>d+r(\alpha)$. Set $\cK:=R\alpha_*\cE[d+r(\alpha)]$. We first prove \begin{equation}\label{eq:SigmaXAlb}\Sigma^k(X,\cE)\cap \car^{\circ}(X)=\Sigma^{k-d-r(\alpha)}(\Alb(X),\cK)\subset \Sigma^{>0}(\Alb(X),\cK).\end{equation}
	This is used in the proof of both \ref{it:Sigstri} and \ref{it:thin}.
	
	Indeed, by Proposition \ref{pp:semismall}, the complex of sheaves $\cK$ lies in ${}^pD^{\le0}(\Alb(X))$. For every $\chi\in \car^{\circ}(X)$, let $\cD_{\chi}$ (resp. $\cL_{\chi}$)  be the corresponding  character sheaf on $\Alb(X)$ (resp. on $X$). Then $\alpha^*\cD_{\chi}=\cL_{\chi}$. By \cite[Cor.~7.5 (g), p.109]{kiehl2001weil}, $R\alpha_*(\cE\otimes^L\cL_{\chi})=(R\alpha_*\cE)\otimes^L \cD_{\chi}$ in $D_c^b(\Alb(X))$. It follows  that \[H^k(X,\cE\otimes \cL_{\chi})=H^k(\Alb(X),(R\alpha_*\cE)\otimes^L \cD_{\chi})=H^{k-d-r(\alpha)}(\Alb(X), \cK\otimes^L \cD_{\chi}),\] whence (\ref{eq:SigmaXAlb}). Now Point \ref{it:Sigstri} follows from Fact \ref{ft:vanish} \ref{it:cls} and  Corollary \ref{cor:speccls}  \ref{it:Sigma>m}, and Point \ref{it:thin} follows from Corollary \ref{cor:speccls} \ref{it:Sigmathin}. 
\end{proof}
	\section{Generic vanishing result for manifolds in Fujiki class $\cC$}\label{sec:main}
In Section \ref{sec:Fujiki}, we recall the definition of Fujiki class $\cC$, the object of central interest in this note. Then we restrict mainly to algebraic varieties in Section \ref{sec:Moishezon}.
\subsection{Fujiki class $\cC$}\label{sec:Fujiki}

\begin{df}[Fujiki class $\cC$, {\cite[Def.~1]{ueno1980three}}]\label{df:Fujiki}
	A  compact complex manifold is called in Fujiki class $\cC$ if it is the meromorphic image of a compact Kähler manifold.
\end{df}
Every compact Kähler manifold is in Fujiki class $\cC$. 
The reason why Fujiki class $\cC$ is interesting is two-fold. For one thing,  this class is large enough in practice. For another,  in this class there is a Hodge theory with  unitary local systems as coefficients.
\begin{ft}[{\cite[Cor.~5.3]{timmerscheidt1987mixed}}]\label{ft:FujikiHodge} Let $X$ be a  complex manifold in Fujiki class $\cC$.  Then, for every unitary local system $\cE$ on $X$, $X$ is $\cE$-regular.\end{ft}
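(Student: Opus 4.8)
The plan is to deduce Fact \ref{ft:FujikiHodge} from the known Hodge theory of compact Kähler manifolds by descending along a Kähler modification. Since $X$ is in Fujiki class $\cC$, there is a compact Kähler manifold $\tilde X$ with a surjective meromorphic map to $X$; by resolving indeterminacy (Hironaka) we may replace $\tilde X$ by a compact Kähler manifold and assume we have a surjective holomorphic map $\pi:\tilde X\to X$ which is moreover bimeromorphic onto its image, and after a further modification we may take $\pi$ to be a proper modification (a composition of blow-ups with smooth centers), so $\dim\tilde X=\dim X=:n$. The strategy is then: (i) pull back the unitary local system $\cE$ to $\tilde X$, where Hodge theory is available by the Kähler case of Definition \ref{df:regular}; (ii) show that the relevant cohomology of $X$ with coefficients in $\cE$ (and in $\Omega^p_X\otimes_{\C}\cE$) is a direct summand of the corresponding cohomology of $\tilde X$, compatibly with the Hodge filtrations; (iii) conclude that the three conditions defining $\cE$-regularity descend from $\tilde X$ to $X$.

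The key technical input for step (ii) is that $\pi$ induces, for a modification of compact complex manifolds, an injection $H^k(X,\cE)\hookrightarrow H^k(\tilde X,\pi^*\cE)$ admitting a functorial retraction, and likewise $H^q(X,\Omega^p_X\otimes_{\C}\cE)\hookrightarrow H^q(\tilde X,\Omega^p_{\tilde X}\otimes_{\C}\pi^*\cE)$. For the constant-coefficient case this is classical: $R\pi_*\C_{\tilde X}\simeq \C_X\oplus(\text{positive-degree terms})$ since $\pi$ has connected fibers and (over the smooth centers) the fibers are projective spaces, and $R\pi_*\Omega^p_{\tilde X}\simeq\Omega^p_X$ for a modification of smooth manifolds. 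With unitary coefficients $\cE$, one uses the projection formula: $R\pi_*(\pi^*\cE)\simeq\cE\otimes^L_{\C}R\pi_*\C_{\tilde X}$, which still contains $\cE$ as a direct summand, and similarly $R\pi_*(\pi^*\cE\otimes_{\C}\Omega^p_{\tilde X})\simeq\cE\otimes_{\C}R\pi_*\Omega^p_{\tilde X}\simeq\cE\otimes_{\C}\Omega^p_X$ using that $\cE$ is a locally free $\C_X$-module of finite rank (so tensoring with it is exact and commutes with $R\pi_*$ by the projection formula) — here one must be a little careful that $\pi^*\cE\otimes_{\C}\Omega^p_{\tilde X}$ is computing the Dolbeault cohomology of the holomorphic bundle $\Omega^p_{\tilde X}\otimes_{O_{\tilde X}}(\pi^*\cE\otimes_{\C}O_{\tilde X})$, and that the flat bundle $\pi^*\cE\otimes_{\C}O_{\tilde X}$ is the pullback of $\cE\otimes_{\C}O_X$, so coherent-sheaf base change for the modification $\pi$ applies.

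Given these splittings, step (iii) is formal. The Frölicher spectral sequence of $(X,\cE)$ maps to that of $(\tilde X,\pi^*\cE)$ and the map is split injective on every page by the summand statements above; since the target degenerates at $E_1$ (Kähler case), a split injection into a degenerate spectral sequence forces degeneration at $E_1$, giving condition (1). The induced filtration on $H^k(X,\cE)$ is the one pulled back from $H^k(\tilde X,\pi^*\cE)$ under the split injection, hence a sub-Hodge structure of a complex Hodge structure of weight $k$, which is again a complex Hodge structure of weight $k$; this yields condition (2) and the decomposition (\ref{eq:decompE}). Finally condition (3), the conjugation isomorphism $H^q(X,\Omega^p_X\otimes_{\C}\cE)\xrightarrow{\sim}H^p(X,\Omega^q_X\otimes_{\C}\cE^\vee)$, is compatible with $\pi^*$ (conjugation commutes with pullback, and $\pi^*\cE^\vee=(\pi^*\cE)^\vee$), so it is the restriction to the direct summands of the corresponding isomorphism on $\tilde X$, which exists by the Kähler case. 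The main obstacle I anticipate is verifying carefully that all these identifications are compatible with the Hodge filtration (\ref{eq:filtration}) induced on the bidegree level — i.e. that the splitting $R\pi_*(\pi^*\cE)\to\cE$ is a filtered/bigraded splitting at the level of the Dolbeault complexes, not merely of cohomology — which is exactly the point that Timmerscheidt's argument in \cite{timmerscheidt1987mixed} handles; alternatively one can bypass this by invoking the $E_1$-degeneration plus Hodge symmetry directly via Fact \ref{ft:deedee}, checking the $\partial\db$-lemma descends along $\pi$, but in any case the delicate bookkeeping of filtrations under the modification is where the real work lies.
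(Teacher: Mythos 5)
The paper does not prove this statement: it is imported verbatim as a Fact from Timmerscheidt \cite[Cor.~5.3]{timmerscheidt1987mixed}, so there is no internal proof to compare yours against. Your overall strategy --- descend Hodge theory along a Kähler modification $\pi:\tilde X\to X$ via split injections on cohomology --- is the natural one and is close in spirit to how such descent results are established in the literature (note, though, that even the first reduction, from ``meromorphic image of a compact Kähler manifold'' to ``admits a compact Kähler modification,'' is a nontrivial theorem of Varouchas rather than a formal consequence of Hironaka).

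There is, however, a concrete error in your step (ii): the claim $R\pi_*\Omega^p_{\tilde X}\simeq\Omega^p_X$ is false for a proper modification when $p\ge1$. Only the degree-zero statement $\pi_*\Omega^p_{\tilde X}\cong\Omega^p_X$ holds; the higher direct images are in general nonzero sheaves supported on the center. For the blow-up $\pi:\tilde S\to S$ of a point on a surface one has $h^{1,1}(\tilde S)=h^{1,1}(S)+1$, and the Leray spectral sequence then forces $R^1\pi_*\Omega^1_{\tilde S}\cong\C_{pt}\neq0$. Since you derive the direct-summand statement for Dolbeault cohomology from this false isomorphism, that part of the argument collapses as written. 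The conclusion you need --- that $\pi^*:H^q(X,\Omega^p_X\otimes_\C\cE)\to H^q(\tilde X,\Omega^p_{\tilde X}\otimes_\C\pi^*\cE)$ is split injective --- is still true, but it must be obtained differently, e.g.\ from a trace map $\pi_*$ defined by pushforward of $\cE$-valued currents (which preserves bidegree and satisfies $\pi_*\pi^*=\mathrm{id}$ since $\pi$ has degree one), not from an isomorphism of higher direct images. Once that is repaired, your degeneration argument (an injection of spectral sequences into an $E_1$-degenerate one forces degeneration) is fine; but the remaining point you yourself flag --- that the splitting is compatible with the filtration (\ref{eq:filtration}), so that the induced filtration on $H^k(X,\cE)$ is the restriction of the one on $H^k(\tilde X,\pi^*\cE)$ and conditions (2) and (3) of Definition \ref{df:regular} descend --- is precisely the substantive content of Timmerscheidt's proof and is not supplied here. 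As it stands, the proposal is an outline with one false step and one acknowledged missing step, not a proof.
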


In particular, from Fact \ref{ft:FujikiHodge},  every manifold in Fujiki class $\cC$ is regular. As is explained in Section \ref{sec:Jac} and Section \ref{sec:Alb}, the Jacobian and Albanese of a complex manifold in Fujiki class $\cC$ behave well.

\medskip

\begin{thm}\label{thm:main}
	Let $X$ be an $n$-dimensional  complex manifold in Fujiki class $\cC$,  and let $\alpha:X\to \Alb(X)$ be the Albanese map associated with some base point. Let $E\to X$ be a flat unitary holomorphic vector bundle. Then for any   
	integers   $p,q\ge0$, one has:  
	\begin{enumerate}
		\item\label{it:locusana} The locus $S^{p,q}(X,E)$ is an analytic subset of $\Pic^0(X)$.
		\item\label{it:Serredual} $S^{n-p,n-q}(X,E^{\vee})=\{L\in \Pic^0(X)|L^{\vee}\in S^{p,q}(X,E)\}$.
		\item \label{it:Spq} If  $p+q<n-r(\alpha)$ or $p+q>n+r(\alpha)$, then $S^{p,q}(X,E)$ is  contained in  a strict (and  thin when $\Alb(X)$ is algebraic) subset of $\Pic^0(X)$. 
	\end{enumerate}
\end{thm}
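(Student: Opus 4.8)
The plan is to reduce everything, via the unitary Riemann–Hilbert correspondence and Hodge theory, to Theorem~\ref{thm:premain}. First I would fix a hermitian metric on $E$ whose Chern connection is flat (possible by Definition~\ref{df:flatuni} and Lemma~\ref{lm:MSE4583322}), and let $\cE = \ker(\nabla) \in \Loc^u(X)$ be the associated unitary local system, so that $E \cong \cE \otimes_{\C} O_X$ by Theorem~\ref{thm:unitRH}. Since $X$ is in Fujiki class $\cC$, Fact~\ref{ft:FujikiHodge} says $X$ is $\cE'$-regular for every unitary local system $\cE'$; in particular, applying the $\cE$-regularity (and $\cE^\vee$-regularity) one gets, for each $L = \cL_\chi \otimes_\C O_X$ with $\chi \in \car^u(X)$ and $\cL_\chi$ the corresponding unitary rank-$1$ local system, the Hodge decomposition
\begin{equation*}
H^k(X, \cE \otimes_\C \cL_\chi) = \bigoplus_{p+q=k} H^q(X, \Omega_X^p \otimes_\C \cE \otimes_\C \cL_\chi),
\end{equation*}
together with Hodge symmetry $H^q(X,\Omega_X^p\otimes_\C\cE\otimes_\C\cL_\chi) \overset{\sim}{\to} \overline{H^p(X,\Omega_X^q\otimes_\C\cE^\vee\otimes_\C\cL_\chi^{-1})}$. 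Note that $\Omega_X^p\otimes_{O_X} L \otimes_{O_X} E = \Omega_X^p \otimes_\C \cE \otimes_\C \cL_\chi$, so $S^{p,q}(X,E)$ is governed by these Dolbeault groups.

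For part~\ref{it:locusana}: using the universal line bundle $L$ on $X \times \Pic^0(X)$ from Corollary~\ref{cor:Poincare}, the coherent sheaf $\mathrm{pr}_{2,*}^{}$ of the relevant relative $\mathrm{Ext}$/cohomology sheaf, together with Grauert-type semicontinuity, shows that $\{h^q \ge m\}$ is analytic; this is the routine part. For part~\ref{it:Serredual}: Serre duality on the compact manifold $X$ gives $H^q(X,\Omega_X^p\otimes L\otimes E)^\vee \cong H^{n-q}(X,\Omega_X^{n-p}\otimes L^\vee\otimes E^\vee)$ (using $\Omega_X^p{}^\vee \otimes \omega_X \cong \Omega_X^{n-p}$ and $E^\vee$), which is exactly the claimed equality of loci.

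The heart is part~\ref{it:Spq}. Identify $\Pic^0(X)$ with $\car^u(X) = \Loc^{u,1}(X)$ via Corollary~\ref{cor:chartoPic}\ref{it:Loc1Pictau}, and note $\Pic^0(X)$ corresponds to the identity component $\car^\circ(X)$ under the identification of Proposition~\ref{pp:Alb}\ref{it:AlbPic0}. For $L$ unitary of class in $\Pic^0(X)$, the Hodge decomposition above shows
\begin{equation*}
S^{p,q}(X,E) \cap \{L \text{ unitary}\} \subseteq \{\chi : H^{p+q}(X, \cE \otimes_\C \cL_\chi) \ne 0\} = \Sigma^{p+q}(X,\cE) \cap \car^\circ(X),
\end{equation*}
because a nonzero summand forces the whole group nonzero. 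By Theorem~\ref{thm:premain}, if $p+q < n - r(\alpha)$ or $p+q > n + r(\alpha)$, then $\Sigma^{p+q}(X,\cE)\cap\car^\circ(X)$ is a strict Zariski-closed subset of $\car^\circ(X)$, and contained in a thin subset when $\Alb(X)$ is algebraic. Pulling this back through the real-analytic isomorphism $T(X) \overset{\sim}{\to} \Pic^0(X)$ of Corollary~\ref{cor:chartoPic}\ref{it:TX=Pic0} shows $S^{p,q}(X,E)$, which is a priori only known to be analytic in $\Pic^0(X)$, is contained in the preimage of this strict (resp.\ thin) set; combined with part~\ref{it:locusana} and the fact that a thin subset of $\Pic^0(X)$ is a finite union of torsion-free translates of strict subtori (Lemma~\ref{lm:thintrans}), this gives the conclusion.

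The main obstacle I anticipate is bookkeeping the compatibility of all the identifications---$E \leftrightarrow \cE$ under unitary RH, $L \leftrightarrow \cL_\chi$, $\Pic^0(X) \leftrightarrow \car^\circ(X)$---so that twisting $E$ by $L$ matches twisting $\cE$ by $\cL_\chi$ \emph{and} so that the Hodge decomposition for $\cE\otimes_\C\cL_\chi$ is the one induced by the filtration~\eqref{eq:filtration} (this is exactly where $\cE\otimes_\C\cL_\chi$ being a \emph{unitary} local system, hence covered by Fact~\ref{ft:FujikiHodge}, is essential). A secondary subtlety is that $S^{p,q}(X,E)$ as defined ranges over all of $\Pic^0(X)$, every element of which is flat unitary by Corollary~\ref{cor:chartoPic}\ref{it:Loc1Pictau}---so no point is missed---but one must make sure the analyticity in part~\ref{it:locusana} interacts correctly with the real-analytic (not a priori complex-analytic) identification $T(X) \cong \Pic^0(X)$; since the target strict set is Zariski closed in $\car^\circ(X)$, its intersection with $\car^u$ pulls back to something whose Zariski closure in $\Pic^0(X)$ is still strict, which suffices.
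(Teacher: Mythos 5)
Your proposal is correct and follows essentially the same route as the paper: unitary Riemann--Hilbert plus Fact~\ref{ft:FujikiHodge} to turn $S^{p,q}(X,E)$ into a piece of $\Sigma^{p+q}(X,\cE)\cap T(X)$ via the Hodge decomposition of $H^k(X,\cE\otimes_{\C}\cL_{\chi})$, the universal line bundle and Grauert semicontinuity for \ref{it:locusana}, Serre duality for \ref{it:Serredual}, and Theorem~\ref{thm:premain} for \ref{it:Spq}. The compatibility issues you flag (matching $L\leftrightarrow\cL_\chi$ under (\ref{eq:chartoPic}), and that every class in $\Pic^0(X)$ is flat unitary by Corollary~\ref{cor:chartoPic}~\ref{it:Loc1Pictau}) are exactly the points the paper's proof relies on.
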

\begin{proof}
\hfill	\begin{enumerate}
		\item The projection $p_2:X\times \Pic^0(X)\to \Pic^0(X)$ is a regular family in the sense of \cite[p.207]{grauert2012coherent}. Let $p_1:X\times \Pic^0(X)\to X$ be the other projection. Let $\cP$ be the universal line bundle  on $X\times \Pic^0(X)$ given by Corollary \ref{cor:Poincare}.
		Applying  the upper semi-continuity theorem (\cite[p.210]{grauert2012coherent}) to the vector bundle $\cP\otimes p_1^*\Omega_X^p$ and the regular family $p_2$, one gets that $S^{p,q}(E)$ is an analytic subset of $\Pic^0(X)$.

		\item By Serre duality (see, \textit{e.g.}, \cite[Prop.~4.1.15]{huybrechts2005complex}), for every $L\in \Pic(X)$, there is a perfect pairing \[H^q(X,\Omega_X^p\otimes_{O_X}L\otimes_{O_X}E)\times H^{n-q}(X,\Omega_X^{n-p}\otimes_{O_X}L^{\vee}\otimes_{O_X}E^{\vee})\to\C,\]  so $L\in S^{p,q}(X,E)$ if and only if $L^{\vee}\in S^{n-p,n-q}(X,E^{\vee})$. 
		\item By Theorem \ref{thm:unitRH},  there is   a unitary local system $\cE$ on $X$ such that $\cE\otimes_{\C}O_X$ is isomorphic to $E$. For each $\chi\in \car(X)$, let $L_{\chi}:=\cL_{\chi}\otimes_{\C}O_X$. Then 
		the isomorphism (\ref{eq:chartoPic}) of real Lie groups is given  by $\chi\mapsto L_{\chi}$. Moreover, the Hodge decomposition (\ref{eq:decompE}) for $\cE\otimes_{\C}\cL_{\chi}$ provided by Fact \ref{ft:FujikiHodge} is \[H^k(X,\cE\otimes_{\C}\cL_{\chi})=\oplus_{p+q=k}H^q(X,\Omega_X^p\otimes_{\C}\cE\otimes_{\C}\cL_{\chi})=H^q(X,\Omega_X^p\otimes_{O_X}E\otimes_{O_X}L_{\chi}).\] Therefore, under the isomorphism (\ref{eq:chartoPic}), one has \begin{equation}\label{eq:capT}\Sigma^k(X,\cE)\cap T(X)=\cup_{p+q=k}S^{p,q}(X,E).\end{equation}  The result follows from Theorem \ref{thm:premain}.
	\end{enumerate}
\end{proof}
\begin{rk}Theorem \ref{thm:main} \ref{it:Spq} extends Fact \ref{ft:KWvanish} from Kähler manifolds to Fujiki class $\cC$. As $\dim X-r(\alpha)\le \dim \alpha(X)$, the numerical hypothesis in  Theorem \ref{thm:main} is more restrictive than that in  Fact \ref{ft:GL}. An example from \cite[Remark, p.401]{green1987deformation} is reconsidered in the last paragraph of \cite[Sec. 3]{kramer2015vanish}, to show that the bound  $p+q<\dim X-r(\alpha)$  is optimal for Fact \ref{ft:KWvanish}. \end{rk}
\subsection{Moishezon manifolds}\label{sec:Moishezon}
Moishezon manifolds are  examples of manifolds in Fujiki class $\cC$. 
\begin{df}[Moishezon manifold, {\cite[Def.~2.2.12]{ma2007holomorphic}}] A connected compact complex manifold $X$ is called  Moishezon  if it has $\dim X$ algebraically independent meromorphic functions.\end{df} In fact, according to \cite[Thm.~2.2.16]{ma2007holomorphic}, for every Moishezon manifold  $X$, there is a proper modification $\pi:X'\to X$ with $X'$ a projective manifold. In particular, $X$ is the meromorphic image of a projective manifold, hence in Fujiki class $\cC$. Conversely, a connected compact complex manifold that is the meromorphic image of a projective manifold is Moishezon by the proof of   \cite[Cor.~12.12]{voisin2002hodge}. For more references, see  \cite[Sec.~1]{jia2022moishezon}.

The intersection of the two subclasses, Kähler and Moishezon, is exactly the class of projective manifolds. More precisely, Moishezon's theorem (see, \textit{e.g.}, \cite[Thm.~12.13]{voisin2002hodge}) asserts that a Moishezon manifold  is Kähler if and only if it is projective. A Moishezon manifold may not be homotopy equivalent to a Kähler manifold (\cite[Thm.~1]{oguiso1994two}). Kodaira-Spencer stability theorem (see, \textit{e.g.}, \cite[Thm.~9.1]{voisin2002hodge}) shows that small deformations of a Kähler manifold are Kähler. Similarly, small deformations of a regular manifold are regular (\cite[Cor.~3.7]{angella2013partial}). By contrast, there is a small deformation of a  Moishezon manifold that is not in Fujiki class $\cC$ (\cite[Sec.~0]{campana1991classc}). In particular, there exists a regular manifold that is not in Fujiki class $\cC$.

Moishezon manifolds are abundant. For example, for every smooth proper algebraic variety $X/\C$, its analytification $X^{\an}$ is a Moishezon manifold (\cite[p.442]{hartshorne2013algebraic}). Hironaka (\cite{hironaka1960theory}, see also \cite[p.443]{hartshorne2013algebraic}) gives examples of Moishezon manifolds that are not algebraic, 
and  smooth proper algebraic varieties that are not projective. The situation is depicted below. Every inclusion in  this graph is strict.
\begin{center}
	\begin{tikzpicture}[scale=0.75]
		\draw[color=black,thick] (0,0) circle (5cm);
		\node[color=black] at (0cm,4.5cm) {Regular};
		\draw[color=black,thick] (0,0) circle (4cm);
		\node[color=black] at (0cm,-3cm) {Fujiki class $\cC$};
		\draw[color=black] (-1.7cm,0cm) circle (2cm);
		\node[color=black] at (-2.3cm,0cm) {Moishezon};
		\draw[color=black] (-1,1.9cm)--(-1,-1.9) ;
		\draw[color=red!70!black] (1.2,0cm) circle (1.5cm) node {Kähler};
	\begin{scope}
			\clip (1.2cm,0cm) circle (1.5cm);
			\fill[fill=violet!20] (-1.7cm,0cm) circle (2cm);
		\end{scope}
		\draw[color=violet] (0cm,0cm)--(0,2) node[above] {Projective};
		\draw[color=black] (-0.5,-1)--(0.7,-2)node[right] {Proper algebraic};
	\end{tikzpicture}
\end{center}

We need   Proposition \ref{pp:AlbMoi} on the algebraicity of Picard torus and Albanese torus to compare them with the Picard variety and Jacobian variety of an algebraic variety.
\begin{pp}\label{pp:AlbMoi}
	If $X$ is a  Moishezon manifold, then $\Alb(X)$ and $\Pic^0(X)$  are complex abelian varieties dual to each other.
\end{pp}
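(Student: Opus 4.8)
The plan is to combine the general theory of regular manifolds developed in Sections \ref{sec:Jac}--\ref{sec:Alb} with the classical Chow-type algebraicity results for Moishezon manifolds. Recall from Fact \ref{ft:FujikiHodge} (applied to the constant sheaf $\C_X$) that every Moishezon manifold $X$ is regular, so that Corollary \ref{cor:3.3.6}, Lemma \ref{lm:Albtor}, and Proposition \ref{pp:Alb} all apply; in particular $\Alb(X)$ and $\Pic^0(X)$ are already known to be complex tori dual to each other. It therefore suffices to prove that $\Alb(X)$ is an abelian variety — duality then forces $\Pic^0(X)$ to be one as well, since the dual of an abelian variety is an abelian variety (see \cite[Prop.~2.4.1]{birkenhake2004complex} together with the standard fact that the dual complex torus of a polarizable torus is polarizable, \cite[Ch.~2]{birkenhake2004complex}).

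First I would reduce to the projective case. By \cite[Thm.~2.2.16]{ma2007holomorphic} there is a proper modification $\pi:X'\to X$ with $X'$ a connected projective manifold; fix a base point $x_0'\in X'$ with $\pi(x_0')=x_0$. Functoriality of the Albanese (Proposition \ref{pp:Alb} \ref{it:Albcanonical}) yields a morphism of complex tori $\Alb(\pi):\Alb(X')\to \Alb(X)$. The key claim is that $\Alb(\pi)$ is surjective: indeed, the composite $X'\xrightarrow{\pi} X\xrightarrow{\alpha_{X,x_0}}\Alb(X)$ factors through $\Alb(X')$ by the universal property (Proposition \ref{pp:Alb} \ref{it:Albuniv}), and the image of $\alpha_{X,x_0}\circ\pi$ generates $\Alb(X)$ as a complex subtorus because $\pi$ is surjective and $\alpha_{X,x_0}(X)$ generates $\Alb(X)$; hence the image subtorus $\Alb(\pi)(\Alb(X'))$, which contains $\alpha_{X,x_0}(\pi(X'))=\alpha_{X,x_0}(X)$, must be all of $\Alb(X)$. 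Now $X'$ is a smooth projective variety, so $\Alb(X')$ is a complex abelian variety by the classical theory (e.g.\ \cite[Ch.~1.4]{birkenhake2004complex} or GAGA applied to the algebraic Albanese recalled in footnote~\ref{foot:algAlb}). A complex torus admitting a surjective morphism from an abelian variety is itself an abelian variety, because one may push forward a polarization: if $H'$ is a positive definite Riemann form on $\Alb(X')$, then averaging (or restricting $H'$ to a complementary subtorus of the connected component of $\ker\Alb(\pi)$ and transporting it via the isogeny onto $\Alb(X)$) produces a polarization on $\Alb(X)$. Concretely, writing $\Alb(X)=\Alb(X')/T_0$ up to isogeny where $T_0$ is a subtorus, one chooses a subtorus $T_1\subset\Alb(X')$ with $T_0\cap T_1$ finite and $T_0+T_1=\Alb(X')$ (Poincaré reducibility, \cite[Ch.~5]{birkenhake2004complex}), and the restriction of the polarization to $T_1$ descends to $\Alb(X)$.

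The main obstacle — really the only nontrivial point — is justifying this descent of polarizability along a surjection (equivalently, an isogeny after splitting off the kernel) cleanly, i.e.\ invoking Poincaré complete reducibility in the analytic category of complex tori. This is standard once one knows $\Alb(X)$ is a quotient of an abelian variety; Poincaré reducibility holds for abelian varieties and guarantees the needed complement $T_1$. After that the argument is a formality: $\Alb(X)$ is abelian, and by Proposition \ref{pp:Alb} \ref{it:AlbPic0} its dual $\Pic^0(X)$ is abelian too, and they are dual to each other by construction. One could alternatively bypass Poincaré reducibility by noting that $\Pic^0(X')\to\Pic^0(X)$ (dual to $\Alb(\pi)$, hence an \emph{injection} of complex tori with abelian source... wait, dual of surjective is injective) embeds $\Pic^0(X)$ as a subtorus of the abelian variety $\Pic^0(X')$, and a complex subtorus of an abelian variety is an abelian variety (restrict the polarization) — this is the more elementary route and I would actually present the proof this way, since "subtorus of an abelian variety is abelian" needs only restriction of a Riemann form, no reducibility theorem. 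Thus: $\Alb(\pi)$ surjective $\Rightarrow$ its dual $\Pic^0(X)\hookrightarrow\Pic^0(X')$ is a closed immersion of complex tori $\Rightarrow$ $\Pic^0(X)$ is abelian (restricted polarization) $\Rightarrow$ its dual $\Alb(X)$ is abelian, and they are mutually dual.
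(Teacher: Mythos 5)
Your proposal is correct, and its skeleton (pass to a proper modification $\pi:X'\to X$ with $X'$ projective, use that $\Alb(X')$ is an abelian variety, conclude for $\Pic^0(X)$ by duality via Proposition \ref{pp:Alb} \ref{it:AlbPic0}) is the same as the paper's. The one step where you genuinely diverge is the comparison of $\Alb(X')$ with $\Alb(X)$: the paper simply cites \cite[Prop.~9.12, p.107]{ueno2006classification}, which says that $\pi_*:\Alb(X')\to\Alb(X)$ is an \emph{isomorphism} for a proper modification, after which nothing remains to be done. You instead prove only \emph{surjectivity} of $\Alb(\pi)$ -- your argument via functoriality and the fact that $\alpha_{X,x_0}(X)=\alpha_{X,x_0}(\pi(X'))$ generates $\Alb(X)$ (Proposition \ref{pp:Alb} \ref{it:Albuniv}) is correct and self-contained -- and then descend polarizability along the surjection. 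That buys independence from Ueno's result at the cost of an extra (standard) step about complex tori. One small inaccuracy in that step: the dual of a surjection of complex tori is \emph{not} in general injective, only of finite kernel (multiplication by $n$ on a torus is surjective and self-dual, with $n$-torsion kernel), so ``$\Pic^0(X)\hookrightarrow\Pic^0(X')$ is a closed immersion'' is not justified by surjectivity of $\Alb(\pi)$ alone. This does not break the proof: a morphism of complex tori with finite kernel into an abelian variety pulls a polarization back to a polarization, so $\Pic^0(X)$ is still abelian; alternatively, for a proper modification $\pi_*:\pi_1(X')\to\pi_1(X)$ is surjective (the centre has real codimension at least $2$), which makes the dual map honestly injective. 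Either patch is one line, and your preferred route (dualize and restrict/pull back the Riemann form) is indeed more elementary than invoking Poincar\'e reducibility.
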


\begin{proof}
	By \cite[Thm.~2.2.16]{ma2007holomorphic},	$X$ admits a proper modification $\pi:X'\to X$ with $X'$ a projective manifold.	By \cite[Prop.~7.16]{voisin2002hodge}, the Jacobian $\Pic^0(X')$ is projective. From Proposition \ref{pp:Alb} \ref{it:AlbPic0}, the torus $\Alb(X')$ is  dual to $\Pic^0(X')$,  so $\Alb(X')$ is algebraic. By \cite[Prop.~9.12, p.107]{ueno2006classification}, the morphism  $\pi_*:\Alb(X')\to \Alb(X)$ given by Proposition \ref{pp:Alb} \ref{it:Albcanonical} is an isomorphism. 
\end{proof}
\begin{rk}
By \cite[ p.70]{birkenhake2004complex}, the analytic dual torus of a complex abelian variety is an abelian variety. Moreover, by \cite[p.86]{mumford1974abelian}, the (algebraic) dual abelian variety (defined in \cite[p.78]{mumford1974abelian}) of a complex abelian variety coincides with its analytic dual torus, so we do not distinguish the two duals in this case.
\end{rk}

	From now on, let $X/\C$ be a smooth proper algebraic variety of dimension $n$ with a base point $x_0\in X(\C)$, and let $\Sch/\C$ (resp. $\Sets$) be the  category of $\C$-schemes (resp. sets). The fppf-sheaf associated to the functor \[P_{X/\C}:(\Sch/S)^{\op}\to \Sets,\quad T\mapsto \Pic(X\times_{\C}T)\] is called the \emph{relative Picard functor} of $X$  (\cite[Def.~2, p.201]{bosch2012neron}). From \cite[p.211, p.231 and p.233]{bosch2012neron}, the relative Picard functor of $X$ is represented by a smooth group scheme $\Pic_{X/\C}$ over $\C$. In particular, the group $\Pic_{X/\C}(\C)=\Pic(X)$. By \cite[Thm.~3, p.232]{bosch2012neron}, the identity component  $\Pic^0_{X/\C}$ of $\Pic_{X/\C}$ is proper over $\C$, hence a complex  abelian variety called the \emph{Picard variety} of $X$. 
	
	From \cite[Thm.~5]{serre1958morphismes}, there is an abelian variety $\Alb(X)/\C$ with a  $\C$-morphism  $\alpha_{X,x_0}:(X,x_0)\to (\Alb(X),0)$ of pointed varieties satisfying the following universal property:\footnote{similar to that stated in Proposition \ref{pp:Alb} \ref{it:Albuniv}} every $\C$-morphism of pointed varieties $(X,x_0)\to (A,0)$ with $A/\C$ an abelian variety factors uniquely through a morphism of abelian varieties $\Alb(X)\to A$. Such morphism $\alpha_{x_0}$ is unique up to a unique isomorphism. We call $\Alb(X)$ the \emph{algebraic Albanese variety} of $X$ and $\alpha_{X,x_0}:(X,x_0)\to (\Alb(X),0)$ the \emph{algebraic Albanese morphism} corresponding to $x_0$.
	
For every $O_X$-module $F$, let $F^{\an}$ be the corresponding $O_{X^{\an}}$-module defined in \cite[Exp. XII, 1.3]{SGA1}. Hence a functor \[\Mod(O_X)\to \Mod(O_{X^{\an}}),\quad F\mapsto F^{\an}.\]
	By Serre's GAGA \cite[Exp. XII, Thm.~4.4]{SGA1}, the natural group morphism \[\Pic(X)\to \Pic(X^{\an}),\quad L\mapsto L^{\an}\] is an isomorphism. Corollary \ref{cor:Albalg} \ref{it:Picalgan} of GAGA type compares the algebraic Picard variety and the analytic Jacobian. Once again, it is well-known, but a proof is given for the lack of reference. 
\begin{cor}\label{cor:Albalg}
\hfill\begin{enumerate}
		\item\label{it:A=Alb} The analytification of  $\Alb(X)$ (resp. $\alpha_{X,x_0}:X\to \Alb(X)$ )  is $\Alb(X^{\an})$  (resp. $\alpha_{X^{\an},x_0}:X^{\an}\to \Alb(X^{\an})$).
		\item\label{it:Picalgan} The analytification of  $\Pic^0_{X/\C}$ is $\Pic^0(X^{\an})$.
	\end{enumerate}
\end{cor}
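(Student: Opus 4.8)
The plan is to play the universal properties of the algebraic Albanese variety and of the analytic Albanese torus off against each other, using Chow's theorem and Serre's GAGA to pass between the algebraic and analytic worlds, and then to deduce \ref{it:Picalgan} either by a parallel GAGA argument or by dualising \ref{it:A=Alb}.

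For \ref{it:A=Alb} I would argue as follows. Since $\Alb(X)$ is an abelian variety, $\Alb(X)^{\an}$ is a complex torus, so the analytified algebraic Albanese morphism $\alpha_{X,x_0}^{\an}:(X^{\an},x_0)\to(\Alb(X)^{\an},0)$ is a pointed morphism into a complex torus and therefore factors uniquely through a homomorphism of complex tori $u:\Alb(X^{\an})\to\Alb(X)^{\an}$ by the universal property of Proposition \ref{pp:Alb} \ref{it:Albuniv}. To build the inverse I would first promote $\alpha_{X^{\an},x_0}$ to an algebraic morphism: by Proposition \ref{pp:AlbMoi} the torus $\Alb(X^{\an})$ is a complex abelian variety, say $\Alb(X^{\an})=B^{\an}$ for an abelian variety $B/\C$ (unique by the remark after Proposition \ref{pp:AlbMoi}); the graph of $\alpha_{X^{\an},x_0}$ is a closed analytic subset of $(X\times_{\C}B)^{\an}$ with $X\times_{\C}B$ proper, hence equals $\Gamma^{\an}$ for a closed subvariety $\Gamma\subset X\times_{\C}B$ by Chow's theorem, and the first projection $\Gamma\to X$, being an analytic isomorphism of proper $\C$-varieties, is an isomorphism by GAGA; composing its inverse with the second projection yields an algebraic $\beta:X\to B$ with $\beta^{\an}=\alpha_{X^{\an},x_0}$ and $\beta(x_0)=0$. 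By the universal property of the algebraic Albanese, $\beta$ factors through a homomorphism $v:\Alb(X)\to B$ with $v^{\an}\circ\alpha_{X,x_0}^{\an}=\alpha_{X^{\an},x_0}$. Finally $v^{\an}\circ u$, $u\circ v^{\an}$ and the corresponding identity maps all agree after precomposition with the relevant Albanese morphism, and since the image of an Albanese morphism generates its target torus (a formal consequence of the universal property, stated for the analytic Albanese in Proposition \ref{pp:Alb} \ref{it:Albuniv}), the composites are the identities; thus $u$ is an isomorphism compatible with the Albanese maps.

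For \ref{it:Picalgan} I would either run the parallel GAGA argument or dualise. In the first approach: the rigidified Poincaré bundle on $X\times_{\C}\Pic^0_{X/\C}$ (which exists since $X(\C)\neq\emptyset$) analytifies to a rigidified line bundle on $X^{\an}\times(\Pic^0_{X/\C})^{\an}$ all of whose slices over the connected base $(\Pic^0_{X/\C})^{\an}$ are topologically trivial (the first Chern class is locally constant and vanishes at the origin); by the family version of the universal property of the analytic Picard torus extending Corollary \ref{cor:Poincare}, this family is classified by a homomorphism of complex tori $u:(\Pic^0_{X/\C})^{\an}\to\Pic^0(X^{\an})$ which on $\C$-points is $L\mapsto L^{\an}$, hence injective by GAGA for line bundles; an injective homomorphism of complex tori is a closed immersion, and both tori have dimension $\dim_{\C}H^1(X,O_X)=\dim_{\C}H^1(X^{\an},O_{X^{\an}})$ (GAGA together with Corollary \ref{cor:3.3.6}), so $u$ is an isomorphism. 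In the second approach, \ref{it:Picalgan} drops out of \ref{it:A=Alb} on combining the classical fact that $\Pic^0_{X/\C}$ is the dual abelian variety of $\Alb(X)$ (if one wishes to avoid invoking this for proper $X$, reduce to the projective case via a resolution $X'\to X$, using $\Alb(X')\xrightarrow{\sim}\Alb(X)$ and $\Pic^0_{X'/\C}\xrightarrow{\sim}\Pic^0_{X/\C}$), the remark after Proposition \ref{pp:AlbMoi} that analytification commutes with passing to the dual of a complex abelian variety, and Proposition \ref{pp:Alb} \ref{it:AlbPic0} that the analytic dual torus of $\Alb(X^{\an})$ is $\Pic^0(X^{\an})$.

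\textbf{Main obstacle.} The only genuinely non-formal point is the promotion of the a priori purely analytic map $\alpha_{X^{\an},x_0}$ (equivalently, of the complex torus $\Alb(X^{\an})$) to something algebraic in the proof of \ref{it:A=Alb}. This leans on Proposition \ref{pp:AlbMoi} (itself proved by resolving the Moishezon manifold $X^{\an}$), on Chow's theorem, and on the GAGA principle that an analytic isomorphism of proper $\C$-varieties is algebraic; once this is in hand, everything else is chasing universal properties and counting dimensions. For \ref{it:Picalgan} the analogous subtlety is the family-level universal property of the analytic Picard torus, which the duality route sidesteps entirely.
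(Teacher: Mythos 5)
Your proposal is correct and follows essentially the same route as the paper: Proposition \ref{pp:AlbMoi} plus Chow's theorem and GAGA to algebraize $\alpha_{X^{\an},x_0}$, the two universal properties played off against each other for \ref{it:A=Alb} (the paper verifies the algebraic universal property for $\Alb(X^{\an})$ directly rather than exhibiting mutually inverse homomorphisms, which amounts to the same thing), and the duality between the Albanese and Picard varieties combined with Proposition \ref{pp:Alb} \ref{it:AlbPic0} for \ref{it:Picalgan}, which is exactly your second route. Your first route for \ref{it:Picalgan} via the Poincar\'e bundle would require a family-level universal property of $\Pic^0(X^{\an})$ that the paper never establishes, but since you flag this yourself and fall back on the duality argument, nothing essential is missing.
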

\begin{proof}
\hfill\begin{enumerate}\item	Since $X^{\an}$ is a Moishezon manifold, by Proposition \ref{pp:AlbMoi}, its Albanese torus $\Alb(X^{\an})$ is projective. By Chow's theorem \cite[Cor.~A.4]{birkenhake2004complex}, the map $\alpha_{X^{\an}, x_0}$ is algebraic. By Proposition \ref{pp:Alb} \ref{it:Albuniv},  every algebraic morphism $(X,x_0)\to (A,0)$ to an abelian variety $A/\C$ factors   uniquely through an analytic (hence algebraic by Chow's theorem again) morphism of complex tori $\Alb(X^{\an})\to A^{\an}$. The result follows.
	
\item By \cite[Prop.~A.6]{mochizuki2012topics}, the (algebraic) dual abelian variety of $\Pic^0_{X/\C}$ is $\Alb(X)$. By Proposition \ref{pp:Alb} \ref{it:AlbPic0}, $\Pic^0(X^{\an})$ is the (analytic) dual torus of $\Alb(X^{\an})=\Alb(X)^{\an}$, so $\Pic^0(X^{\an})$  is the analytification of $\Pic^0_{X/\C}$.\end{enumerate}
\end{proof}
Identifying $\Pic^0_{X/\C}$ with $\Pic^0(X^{\an})$ \textit{via} Corollary \ref{cor:Albalg} \ref{it:Picalgan}, one can define thin subsets of $\Pic^0_{X/\C}$. Define the defect of semismallness of a proper morphism $f:M\to N$ between complex algebraic varieties by $r(f)=r(f^{\an})$. With this terminology, we get   the following generic vanishing result for smooth proper algebraic varieties.
\begin{cor}\label{cor:agmain}
	 Let $\cE$ be a \emph{unitary} local system on $X^{\an}$, and let $E=\cE\otimes_{\C}O_{X^{\an}}$ be the corresponding holomorphic vector bundle. Then for any integers  $p,q\ge0$ with $p+q> n+r(\alpha)$ or $p+q<n-r(\alpha)$, the locus $S^{p,q}(X^{\an},E)$ is contained in a thin (and arithmetic when $\cE$ is semisimple of geometric origin in $D_c^b(X^{\an})$) subset of $\Pic^0_{X/\C}$.
\end{cor}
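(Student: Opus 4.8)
The plan is to reduce Corollary~\ref{cor:agmain} directly to the analytic statement Theorem~\ref{thm:main}~\ref{it:Spq}, applied to the Moishezon manifold $X^{\an}$, and then merely bookkeep the identifications between the analytic and algebraic Picard tori and between analytic and algebraic semismallness defects. First I would invoke Moishezon's theorem: since $X/\C$ is smooth proper, $X^{\an}$ is a Moishezon manifold, hence in Fujiki class $\cC$ by Section~\ref{sec:Moishezon}, and in particular regular by Fact~\ref{ft:FujikiHodge}. Thus the hypotheses of Theorem~\ref{thm:main} are met for the flat unitary bundle $E=\cE\otimes_{\C}O_{X^{\an}}$ (flat unitary by Theorem~\ref{thm:unitRH}, since $\cE$ is unitary). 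Applying Theorem~\ref{thm:main}~\ref{it:Spq} with the numerical hypothesis $p+q<n-r(\alpha)$ or $p+q>n+r(\alpha)$ already gives that $S^{p,q}(X^{\an},E)$ is contained in a strict subset of $\Pic^0(X^{\an})$, and a thin subset once one knows $\Alb(X^{\an})$ is algebraic.

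The second step is to verify that $\Alb(X^{\an})$ is indeed algebraic, which is exactly Proposition~\ref{pp:AlbMoi}: for a Moishezon manifold both $\Alb$ and $\Pic^0$ are complex abelian varieties dual to each other. Hence Theorem~\ref{thm:main}~\ref{it:Spq} yields a thin subset of $\Pic^0(X^{\an})$, and by Lemma~\ref{lm:thintrans} a thin subset is a finite union of translates of strict complex subtori. To phrase the conclusion in terms of the \emph{algebraic} Picard variety $\Pic^0_{X/\C}$, I would use Corollary~\ref{cor:Albalg}~\ref{it:Picalgan}, which identifies the analytification of $\Pic^0_{X/\C}$ with $\Pic^0(X^{\an})$; under this identification strict complex subtori correspond to strict abelian subvarieties (Chow's theorem), and the notion of thin subset of $\Pic^0_{X/\C}$ was defined precisely by transport along this identification. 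The defect $r(\alpha)$ appearing in the numerical hypothesis is, by definition, $r(\alpha^{\an})$ where $\alpha^{\an}:X^{\an}\to\Alb(X^{\an})$ is the analytification of the algebraic Albanese morphism (Corollary~\ref{cor:Albalg}~\ref{it:A=Alb}), so the hypothesis is consistent.

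For the arithmetic refinement, the key is to trace how the ``geometric origin'' hypothesis propagates through the proof of Theorem~\ref{thm:premain}~\ref{it:thin}, which ultimately rests on Fact~\ref{ft:vanish}~\ref{it:abel} (Krämer--Weissauer) and Corollary~\ref{cor:speccls}~\ref{it:Sigmathin}. Concretely, $\Sigma^k(X^{\an},\cE)\cap\car^{\circ}(X^{\an})$ is identified, via (\ref{eq:SigmaXAlb}), with $\Sigma^{>0}(\Alb(X^{\an}),\cK)$ for $\cK=R\alpha_*\cE[\,\cdot\,]$, and by Corollary~\ref{cor:speccls}~\ref{it:Sigmathin} this locus is arithmetic-thin provided $\cK$ is semisimple of geometric origin; since $R\alpha_*$ of an object of geometric origin is of geometric origin and $\cE$ semisimple of geometric origin makes $\cK$ semisimple (perverse components) of geometric origin, the refinement follows. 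Then (\ref{eq:capT}) and the isomorphism (\ref{eq:chartoPic}) translate this back to $S^{p,q}(X^{\an},E)$. The main obstacle — really the only point requiring care rather than citation — is checking that the geometric-origin property is genuinely stable under $R\alpha_*$ and under passing to perverse cohomology sheaves, and that the translation via (\ref{eq:chartoPic}) between unitary characters and torsion-translated subtori sends \emph{torsion} characters to \emph{torsion} translates; both are standard (the first by stability of geometric origin under proper pushforward and the six operations, the second because the isomorphism (\ref{eq:chartoPic}) is one of groups, hence matches torsion with torsion), but they are the substance of the argument. Everything else is a diagram chase through Corollaries~\ref{cor:Albalg}, \ref{cor:3.3.6}, \ref{cor:chartoPic} and Lemma~\ref{lm:thintrans}.
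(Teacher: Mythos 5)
Your proposal is correct and follows essentially the same route as the paper: apply Theorem \ref{thm:main} \ref{it:Spq} together with Proposition \ref{pp:AlbMoi} and Corollary \ref{cor:Albalg} for the thin part, and for the arithmetic refinement push $\cE$ forward along $\alpha$ and invoke Corollary \ref{cor:speccls} \ref{it:Sigmathin} via (\ref{eq:SigmaXAlb}) and (\ref{eq:capT}). The one step you flag as ``the substance'' --- that $R\alpha_*\cE[n+r(\alpha)]$ remains semisimple of geometric origin --- is exactly where the paper cites the decomposition theorem of Beilinson--Bernstein--Deligne, and the paper additionally makes explicit the reduction to $p+q>n+r(\alpha)$ via Theorem \ref{thm:main} \ref{it:Serredual}, which your appeal to (\ref{eq:SigmaXAlb}) implicitly requires.
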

\begin{proof}	
By Corollary \ref{cor:Albalg} \ref{it:A=Alb},   the analytification $\alpha^{\an}_{X,x_0}:X^{\an}\to \Alb(X)^{\an}$ coincides with  $\alpha_{X^{\an},x_0}:X^{\an}\to \Alb(X^{\an})$, and by definition, $r(\alpha)=r(\alpha^{\an})$.  From Theorem \ref{thm:main} \ref{it:Spq},  the locus $S^{p,q}(X^{\an},E)$ is contained in a thin subset of $\Pic^0(X)$.
	
	What remains to show is the assertion in the parentheses.	Assume that $\cE$ is semisimple of geometric origin.    By the decomposition theorem \cite[Thm.~6.2.5]{beilinson2018faisceaux},   $\cK:=R\alpha_*\cE[n+r(\alpha)]$  is semisimple of geometric origin in $D_c^b(\Alb(X^{\an}))$. By Theorem \ref{thm:main} \ref{it:Serredual}, one may assume that $p+q>n+r(\alpha)$, so that \[S^{p,q}(X^{\an},E)\subset \Sigma^{p+q}(X^{\an},\cE)\cap T(X)\subset \Sigma^{>0}(\Alb(X),\cK),\] where the first inclusion follows from (\ref{eq:capT}) and the second from (\ref{eq:SigmaXAlb}).  From Corollary \ref{cor:speccls} \ref{it:Sigmathin},   $\Sigma^{>0}(\Alb(X),\cK)$ is contained in an arithmetic thin subset of $\Pic^0_{X/\C}$. 
\end{proof}
\begin{rk}By Chow's theorem, every analytic subset of $X^{\an}$ is algebraic. Therefore, $D_c^b(X^{\an})$ coincides with $D_c^b(X(\C),\C)$ defined in \cite[p.66]{beilinson2018faisceaux} using \emph{algebraic} Whitney stratifications.\end{rk}

		\bibliography{genericvanishing.bib}
	\bibliographystyle{alpha}\end{document}